\setlist[enumerate]{itemsep=2pt,parsep=2pt,before={\parskip=2pt}}
\newcommand{\cosimp}[3]{\xymatrix@R=50pt@C=50pt@1{#1 \ar@<.4ex>[r] \ar@<-.4ex>[r] & {\ }#2 \ar@<0.8ex>[r] \ar[r] \ar@<-.8ex>[r] & {\ } #3 \ar@<1.2ex>[r] \ar@<.4ex>[r] \ar@<-.4ex>[r] \ar@<-1.2ex>[r] & \cdots }}
\newcommand{\adjunction}[4]{\xymatrix@R=50pt@C=50pt@1{#1{\ } \ar@<0.3ex>[r]^-{ {\scriptstyle #2}} & {\ } #3 \ar@<0.3ex>[l]^{ {\scriptstyle #4}}}}
\newcommand{\defeq}{\vcentcolon=}
\newtheorem{theorem}{Theorem}[subsection]
\newtheorem*{theorem*}{Theorem}
\newtheorem*{definition*}{Definition}
\newtheorem{proposition}[theorem]{Proposition}
\newtheorem{lemma}[theorem]{Lemma}
\newtheorem{corollary}[theorem]{Corollary}
\newtheorem{conjecture}[theorem]{Conjecture}
\theoremstyle{definition}
\newtheorem{definition}[theorem]{Definition}
\newtheorem{remark}[theorem]{Remark}
\newtheorem{observation}[theorem]{Observation}
\newtheorem{example}[theorem]{Example}
\newenvironment{hproof}{%
  \proof}{\endproof}
\DeclareMathOperator{\mmod}{mod}
\DeclareMathOperator{\End}{End}
\newcommand{\Div}{\operatorname{Div}}
\newcommand{\Br}{\operatorname{Br}}
\newcommand{\Hom}{\operatorname{Hom}}
\newcommand{\Spec}{\operatorname{Spec}}
\newcommand{\Aut}{\operatorname{Aut}}
\newcommand{\GL}{\operatorname{GL}}
\newcommand{\PGL}{\operatorname{PGL}}
\newcommand{\Id}{\operatorname{Id}}
\newcommand{\rk}{\operatorname{rk}}
\newcommand{\Pic}{\operatorname{Pic}}
\newcommand{\Char}{\operatorname{char}}
\newcommand{\Gal}{\operatorname{Gal}}
\newcommand{\Sym}{\operatorname{Sym}}
\newcommand{\Gr}{\operatorname{Gr}}
\newcommand{\Bl}{\operatorname{Bl}}
\newcommand{\codim}{\operatorname{codim}}
\newcommand{\Gm}{\mathbb{G}_{\mathrm{m}}}
\def \ZZ {\mathbb{Z}}
\def \CC {\mathbb{C}}
\def \AA {\mathbb{A}}
\def \kk {\Bbbk{}}
\def \LL {\mathbb{L}}
\def \FF {\mathbb{F}}
\def \PP {\mathbb{P}}
\def \ge {\geqslant}
\def \le {\leqslant}
\def \et {{\acute e}t}
\def \kk {\Bbbk{}}
\newcommand{\cE}{\mathcal{E}}
\newcommand{\cF}{\mathcal{F}}
\newcommand{\cL}{\mathcal{L}}
\newcommand{\cO}{\mathcal{O}}
\newcommand{\cA}{\mathcal{A}}
\newcommand{\fm}{\mathfrak{m}}
\begin{document}

\title{Automorphisms of symmetric powers and motivic zeta functions}
\author{Vladimir Shein}
\address{National Research University Higher School of Economics, Russian Federation}
\email{vladimir.sh2000@gmail.com}

\begin{abstract}

We prove that if $X$ is a smooth projective variety of dimension greater than 1 over a field $K$ of characteristic zero such that $\operatorname{Pic}(X_{\bar{K}}) = \mathbb{Z}$ and $X_{\bar{K}}$ is simply connected, then the natural map $\rho: \operatorname{Aut}(X) \to \operatorname{Aut}(\operatorname{Sym}^d(X))$ is an isomorphism for every $d > 0$.

We also partially compute the motivic zeta function of a Severi-Brauer surface and explain some relations between the classes of Severi-Brauer varieties in the Grothendieck ring of varieties.

\end{abstract}

\maketitle


\section{Introduction}\label{section:introduction}

Let $X$ be a quasi-projective variety over a field $K$. The Kapranov motivic zeta function of $X$ is a formal power series
\[
Z_{\text{mot}}(X, t) = \sum_{n \ge 0} [\Sym^n(X)]t^n,
\]
where $[\Sym^{n}(X)]$ are the classes of the symmetric powers of $X$ in the Grothendieck ring \linebreak $K_0(\text{Var}/K)$ of varieties over $K$. Kapranov conjectured  that $Z_{\text{mot}}(X, t)$ may be a rational function (as a power series in $K_0(\text{Var}/K)$) for arbitrary variety $X$ ~\mbox{\cite[Remark~1.3.5]{Kapranov}}. If so, this would give a completely new proof of the first Weil conjecture. Kapranov showed that it is the case when $X$ is a curve with a rational point, and then D. Litt  proved the rationality for arbitrary geometrically connected curve (see ~\mbox{\cite[Theorem~10]{Litt}}). However, it turned out that $Z_{\text{mot}}(X, t)$ is not always rational even for complex surfaces. It is a theorem of Larsen and Lunts ~\mbox{\cite[Theorem~1.1]{Lunts2}} that for a smooth complex surface $X$ zeta function $Z_{\text{mot}}(X, t)$ is rational if and only if $X$ has negative Kodaira dimension.

The problem of finding reasonable criteria for the rationality of motivic zeta function remains open for complex varieties of dimension greater than 2, and even for surfaces over non-algebraically closed fields. The present paper grew up from the attempt to prove the rationality of motivic zeta function of Severi-Brauer varieties. Although we do not prove it in general, we give some partial results, together with the computation of the automorphism group of symmetric powers of some classes of projective varieties, which may have independent interest.

The paper consists of two parts. Section \ref{section:automorphisms} is devoted to the problem of finding the automorphisms group of symmetric powers of a variety in terms of the automorphisms group of the variety itself. It is known that if $X$ is a smooth projective curve of genus $g$, with $g > 2$, over an algebraically closed field of characteristic different from two, then the natural morphism $\rho: \operatorname{Aut}(X) \to \operatorname{Aut}(\operatorname{Sym}^d(X))$ is an isomorphism for $d > 2g - 2$ (see \cite[Theorem~1.1]{BG}). Theorem \ref{theorem:automorphisms} is 
the analogue of this result in higher dimensions for some classes of projective simply connected varieties.

In the subsequent sections, we study motivic zeta functions of Severi-Brauer varieties. We note in proposition \ref{ratinoalinquotient} that it is rational in the quotient ring $K_0(\text{Var}/K)/(\LL)$. However, it is much harder to prove the rationality in $K_0(\text{Var}/K)$ itself. For a Severi-Brauer variety $B$, we compute the classes $[\Sym^{r}(B)]$ in the completed Grothendieck ring $\varprojlim K_0(\text{Var}/K)/(\LL^n)$ for all $r$ coprime with $\dim{B} + 1$. Theorem  \ref{zetaforsurface} gives a partial expression for motivic zeta function of a Severi-Brauer surface. Before proving it, we study the classes of the symmetric powers of Severi-Brauer varieties in $K_0(\text{Var}/K)$. The key idea is explained in proposition \ref{highsym}. We find it convenient to use the language of twisted sheaves, and give a largely self-contained review about them in section \ref{prelim}.

The author would like to thank his scientific advisor Vadim Vologodsky for stating the problem, fruitful discussions and constant attention to this work. I am also grateful to Constantin Shramov, Aleksei Piskunov and Igor Spiridonov for useful comments.




\section{Preliminaries} \label{prelim}

\subsection{Preliminaries on symmetric powers}

Let $K$ be a field and let $X$ be a quasi-projective variety over $K$. Then $X^d$ is again quasi-projective, and the quotient $X^d / S_d$ by the action of the symmetric group $S_d$ on $X^d$ is well-defined as a variety. This is the \textit{$d$-fold symmetric power} $\Sym^d(X)$. We adopt the convention that $\Sym^0(X) = \Spec{K}$.

\begin{remark} \label{bijection}
Note that, since quotients by finite groups commute with flat extensions, it follows that $\Sym^d(X_{S}) \cong \Sym^d(X)_{S}$ for every scheme $S$ over $K$. In particular, we have a bijection of sets: $\Sym^d(X)(\bar{K}) = \{z \in \Div^{\text{eff}}(X_{\bar{K}}) \mid \deg{z} = d  \}$.

\end{remark}

Symmetric powers have a natural decomposition into locally closed subsets. Let $(X^{r})^{\circ}$ be the complement in $X^r$ of the union of the big diagonals in $X^r$. Given positive integers $d_1 \le d_2 \le \cdots \le d_k$ such that $d_1 + \cdots + d_k = n$, there is a locally closed embedding $(X^{r})^{\circ} \hookrightarrow X^d$ given by $\Delta_{d_1} \times \ldots \times \Delta_{d_r}$, where $\Delta_{i}: X \to X^{i}$ is the diagonal embedding. We denote by $\widetilde{X}_{d_1, \ldots, d_r}$ the image of $(X^{r})^{\circ}$, and by $X_{d_1, \ldots, d_r}$ the image of $\widetilde{X}_{d_1, \ldots, d_r}$ in $\Sym^d(X)$ under the natural projection $\pi: X^d \to \Sym^d(X)$. It is easily seen that the $X_{d_1, \ldots, d_r}$ give a partition of $\Sym^d(X)$ into locally closed subsets and that $X_{d_1, \ldots, d_r}(\bar{K})=\{z \in \Div^{\text{eff}}(X_{\bar{K}}) \mid z = \sum_{i=1}^{r} d_i x_i \text{ for some } x_1, \ldots, x_r \in X_{\bar{K}} \}$.

One can easily verify that any symmetric power of a smooth curve is smooth. However, if $\dim{X} > 1$, then $\Sym^{d}(X)$ is always singular for $d > 1$, and $\widetilde{X}_{1, \ldots, 1}$ is the smooth locus of $\Sym^{d}(X)$.

We will also need some facts about multisymmetric polynomials. Let $R$ be a ring. Consider the ring $R[X_1, \ldots, X_d]$, where $X_i$'s are vectors $X_i = (x_{i,1}, \ldots, x_{i, n})$ of $n$ independent variables. There is an action of the symmetric group $S_d$ on $R[X_1, \ldots, X_d]: \sigma(f(X_1, \ldots, X_d)) = f(X_{\sigma^{-1}(1)}, \ldots, X_{\sigma^{-1}(d)})$. The invariant subring $R[X_1, \ldots, X_d]^{S_d}$ is called \textit{the ring of multisymmetric polynomials in $d$-vector variables}. This is the coordinate ring of $\Sym^d(\AA_R^{n})$.

\begin{definition}

The elementary symmetric polynomials $e_{k_1, \ldots, k_n}(X_1, \ldots, X_d)$ of $d$ vector variables are defined by the formula
\[
\prod_{i=1}^{d}(1+x_{i,1}t_1+\cdots+x_{i,n}t_n) = 1+ \sum_{k_1, \ldots, k_n} e_{k_1, \ldots, k_n}(X_1, \ldots, X_d) t_1^{k_1} \cdots t_n^{k_n},
\]
where the sum is over all $k_1, \ldots, k_n$ with $1 \le \sum_{i=1}^{n} k_i \le d$.

\end{definition}

\begin{theorem}[{\cite[Chapter 4. 2.4]{Discriminants}}] \label{sym_poly}

Let $R$ be an algebra over a field $K$ of characteristic zero. The ring $R[X_1, \ldots, X_d]^{S_d}$ is generated by the elementary symmetric polynomials $e_{k_1, \ldots, k_n}(X_1, \ldots, X_d)$.

\end{theorem}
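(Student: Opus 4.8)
The plan is to reduce to the case $R = K$ and then prove the statement in two steps: first, that the $K$-subalgebra of $K[X_1,\ldots,X_d]^{S_d}$ generated by the elementary multisymmetric polynomials coincides with the one generated by the \emph{power sums} $p_{k_1,\ldots,k_n} := \sum_{i=1}^{d} x_{i,1}^{k_1}\cdots x_{i,n}^{k_n}$, taken over all multi-indices with $|k| := k_1 + \cdots + k_n \ge 1$ (I write $X_i^{k} := x_{i,1}^{k_1}\cdots x_{i,n}^{k_n}$ and $t^{k} := t_1^{k_1}\cdots t_n^{k_n}$); and second, that the power sums already generate the whole ring of multisymmetric polynomials. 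For the reduction, $R[X_1,\ldots,X_d] = R \otimes_K K[X_1,\ldots,X_d]$, and since $R$ is flat over the field $K$ and forming $S_d$-invariants commutes with flat base change --- this is the ring-theoretic shadow of $\Sym^d(\AA^n_R) \cong \Sym^d(\AA^n_K)\times_K \Spec R$, the principle recalled in Remark~\ref{bijection} --- we get $R[X_1,\ldots,X_d]^{S_d} = R \otimes_K K[X_1,\ldots,X_d]^{S_d}$, so it suffices to treat $R = K$. Note also, by counting degrees in the $t_j$, that $e_{k_1,\ldots,k_n} = 0$ once $|k| > d$, so the finite family appearing in the statement is in fact the complete list of elementary multisymmetric polynomials.

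For the first step I would use generating functions. Writing $L_i(t) = x_{i,1}t_1 + \cdots + x_{i,n}t_n$, the defining identity reads $1 + \sum_{1\le |k|\le d} e_k\, t^k = \prod_{i=1}^{d}(1+L_i) = \exp\bigl(\sum_{i=1}^{d}\log(1+L_i)\bigr)$, and expanding the logarithm gives $\sum_{i=1}^{d}\log(1+L_i) = \sum_{|k|\ge 1} c_k\, p_k\, t^k$ with $c_k = \tfrac{(-1)^{|k|-1}}{|k|}\binom{|k|}{k_1,\ldots,k_n}$. The point is that $c_k$ is a \emph{nonzero} element of $K$ precisely because $\Char K = 0$. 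Applying $\log$ to $1 + \sum_{1\le|k|\le d} e_k\, t^k$ therefore writes each $c_k p_k$, hence each $p_k$ (including those with $|k| > d$), as a polynomial with rational coefficients in the $e_\beta$; conversely, applying $\exp$ writes each $e_k$ as a polynomial in the $p_\beta$. So the two subalgebras coincide, and everything reduces to showing that the power sums generate $K[X_1,\ldots,X_d]^{S_d}$.

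For that, recall that $K[X_1,\ldots,X_d]^{S_d}$ has $K$-basis the orbit sums $m_\lambda$ indexed by \emph{vector partitions} $\lambda = (v_1,\ldots,v_d)$, i.e. unordered $d$-tuples of lattice points $v_i \in \ZZ_{\ge 0}^{n}$; let $\ell(\lambda)$ be the number of nonzero $v_i$. I would show $m_\lambda \in K[p_k : |k|\ge 1]$ by induction on $\ell(\lambda)$, the cases $\ell(\lambda)\le 1$ being trivial ($m_\lambda = 1$, resp. $m_\lambda = p_{v}$). In general set $p_\lambda = \prod_{v_i\ne 0} p_{v_i}$ and expand it as a sum, over all functions $\phi$ assigning to each nonzero-part index $i$ an element $\phi(i)\in\{1,\ldots,d\}$, of the monomials $\prod_{v_i\ne 0} X_{\phi(i)}^{v_i}$. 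The injective $\phi$ together contribute $c_\lambda\, m_\lambda$ for a positive integer $c_\lambda$ (the monomial $X_1^{v_1}\cdots X_{\ell(\lambda)}^{v_{\ell(\lambda)}}$ does occur), while each non-injective $\phi$ yields a monomial lying in the orbit of a vector partition $\mu$ obtained from $\lambda$ by merging two or more of its parts, so that $\ell(\mu) < \ell(\lambda)$. By the induction hypothesis the relevant $m_\mu$ lie in the power-sum subalgebra, and since $c_\lambda$ is invertible in $K$ (again $\Char K = 0$) we conclude $m_\lambda \in K[p_k : |k|\ge 1]$. Combined with the first step, this proves the theorem.

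The only real content --- everything else is bookkeeping --- is the use of $\Char K = 0$, and it is worth flagging that it enters twice over: once to invert the multinomial factors $c_k$ relating the $e$'s and the $p$'s, and once to invert the integer $c_\lambda$ when extracting $m_\lambda$ from $p_\lambda$. This is not an artifact of the argument: in positive characteristic neither the elementary multisymmetric polynomials nor the power sums suffice to generate the invariant ring, and genuinely new generators are needed. An alternative would be a direct leading-term induction on the $e_k$ themselves, in the spirit of MacMahon, after fixing a monomial order in which $m_\lambda$ has leading monomial $X_1^{v_1}\cdots X_d^{v_d}$ with $v_1 \succeq \cdots \succeq v_d$; there the awkward point is keeping track of the column structure of the exponent matrices when matching $\lambda$ against leading terms of products of the $e_k$, which is precisely what the detour through power sums avoids.
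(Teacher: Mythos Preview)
The paper does not supply its own proof of this statement: it is imported wholesale from \cite[Chapter~4, 2.4]{Discriminants}, with only the accompanying remark that the argument there (stated over $\CC$) carries over to any algebra over a field of characteristic zero. Your proposal fills in a complete, self-contained argument, and it is correct.

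Your route --- flat base change to reduce to $R=K$, the $\exp/\log$ generating-function identity to show that the $e_k$ and the multivariate power sums $p_k$ generate the same $K$-subalgebra, and then induction on the length $\ell(\lambda)$ of a vector partition to show the $p_k$ generate all orbit sums $m_\lambda$ --- is one of the classical proofs, and is in the same spirit as the Newton-identity argument in \cite{Discriminants}. So you are not doing something genuinely different from what the paper cites; you are making explicit what it left to the reference. Your two flagged uses of $\Char K = 0$ (inverting the multinomial factors $c_k$, and inverting the positive integer $c_\lambda$) are precisely the content of the paper's remark that the result fails in positive characteristic, cf.\ \cite{Neeman}.

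One small point of care in your write-up: when you say the injective $\phi$ together contribute $c_\lambda\, m_\lambda$, it is worth being explicit that $c_\lambda$ is the product of the factorials of the multiplicities of the distinct nonzero parts of $\lambda$ (the number of injective $\phi$ yielding a fixed monomial in the $S_d$-orbit), so that the reader sees immediately why $c_\lambda$ is a \emph{positive} integer and not merely nonzero.
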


\begin{remark}

The cited book proves the theorem for $R = \CC$ only, but it is easy to see that the proof works over any algebra over a field of characteristic zero. However, if $\Char{k} > 0$ and $d, n$ are large enough, then the theorem is no longer true. See, for example, \cite{Neeman}.

\end{remark}

\subsection{The Kapranov motivic zeta function.}

Let $K$ be any field and let $S$ be a variety over $K$. \textit{The Grothendieck ring} $K_0(\text{Var}/S)$ \textit{of varieties over} $S$ is the quotient of the free abelian group generated by the isomorphism classes $[X]_S$ of varieties over $S$, by the relations
\[
[X]_S = [Z]_S + [X \, \backslash \, Z]_S,
\]
where $Z$ is a closed subvariety of $X$. It is easily seen (by induction on $\dim{X}$) that if $X = Z_1 \sqcup \ldots \sqcup Z_m$, where all $Z_i$ are locally closed subvarieties of $X$, then $[X]_S = [Z_1]_S+ \ldots + [Z_m]_S$. The product in $K_0(\text{Var}/S)$ is given by
\[
[X]_S \cdot [Y]_S = [(X \times_{S} Y)_{\text{red}}]_S
\]
and extended by linearity. It is clear that this product endows $K_0(\text{Var}/S)$ with the structure of a commutative ring, and that $[S]$ is a unit in $K_0(\text{Var}/S)$.

If $S = \Spec{K}$, we simply write $[X]$ instead of $[X]_{\Spec{K}}$.

We denote by $\LL_S$ the class of $\AA_{S}^1$ in $K_0(\text{Var}/S)$ and simply by $\LL$ the class of $\AA_{K}^1$ in $K_0(\text{Var}/K)$. Note that $(\AA_{S}^1)^{n} = \AA_{S}^n$, so $[\AA_{S}^n] = \LL_S^n$. Using the decomposition $\PP_S^{n} = \PP_S^{n-1} \sqcup \AA_{S}^n$, we obtain by induction that $[\PP^{n}_S] = 1 + \LL_S + \ldots + \LL_S^n.$

The motivic zeta function $Z_{\text{mot}}(X, t)$ of a variety $X$ was introduced by M. Kapranov in ~\mbox{\cite[1.3]{Kapranov}}. For simplicity, we shall assume $X$ is a quasi-projective variety over $K$.

\begin{definition}[Kapranov motivic zeta function] Let $X$ be a quasi-projective variety over $K$. The motivic zeta function $Z_{\text{mot}}(X, t)$ is defined by the formula:
\[
Z_{\text{mot}}(X, t) = \sum_{n \ge 0} [\Sym^n(X)]t^n \in 1 + K_0(\text{Var}/K)[[t]].
\]

\end{definition}




\begin{remark}

If $\Char{K} > 0$, one usually consider the ring $\tilde{K}_{0}(\text{Var}/K)$. It is defined as the quotient of $K_0(\text{Var}/K)$ by the relations $[X]-[Y]$, where $X$ and $Y$ are such that there exists  a radical surjective morphism $X \to Y$. It is easy to see that there is a well-defined homomorphism of rings $K_0(\text{Var}/K) \to \tilde{K}_{0}(\text{Var}/K)$. If $\Char{K} = 0$, then it is an isomorphism, because in this case any radical surjective morphism is a piecewise isomorphism, whence $[X] = [Y]$.

For simplicity, from now on we write $K_0(\text{Var}/K)$ instead of $\tilde{K}_{0}(\text{Var}/K)$.

\end{remark}

\begin{proposition}[{\cite[Proposition 7.28]{Zeta}}]\label{Zeta1} \label{zeta1}

The map $K_0(\text{Var}/K) \to (1 + K_0(\text{Var}/K)[[t]], \cdot)$, $[X] \to Z_{\text{mot}}(X, t)$ is a group homomorphism. 

\end{proposition}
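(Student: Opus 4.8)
The plan is to reduce everything to the single geometric identity
\[
Z_{\text{mot}}(X, t) \;=\; Z_{\text{mot}}(Z, t)\cdot Z_{\text{mot}}(X\setminus Z, t),
\]
valid for every quasi-projective $X$ over $K$ and every closed subvariety $Z\subseteq X$, and then prove that identity. Granting it, well-definedness and the homomorphism property are automatic: taking $X=Z\sqcup U$ to be a disjoint union shows $Z_{\text{mot}}(Z\sqcup U,t)=Z_{\text{mot}}(Z,t)Z_{\text{mot}}(U,t)$, i.e.\ the assignment sends disjoint unions — hence sums of classes — to products; and the same identity for a general scissor pair $Z\subseteq X$, $U=X\setminus Z$, shows the assignment kills the defining relation $[X]-[Z]-[U]$, so it descends to $K_0(\text{Var}/K)$. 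Since $Z_{\text{mot}}(\emptyset,t)=1$ — because $\Sym^0=\Spec K$ and $\Sym^n(\emptyset)=\emptyset$ for $n\ge 1$ — is the unit of the group $(1+K_0(\text{Var}/K)[[t]],\cdot)$, the universal property of $K_0(\text{Var}/K)$ as an abelian group then produces a unique extension to a group homomorphism, necessarily $[X]-[Y]\mapsto Z_{\text{mot}}(X,t)\cdot Z_{\text{mot}}(Y,t)^{-1}$.

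The identity is equivalent, coefficient by coefficient in $t$, to
\[
[\Sym^n(X)] \;=\; \sum_{i+j=n}[\Sym^i(Z)]\cdot[\Sym^j(U)]\quad\text{in }K_0(\text{Var}/K),\qquad U\defeq X\setminus Z,
\]
which I would obtain from a stratification of $\Sym^n(X)$ into locally closed subvarieties $\Sym^n(X)=\bigsqcup_{i+j=n}Y_{i,j}$ with $Y_{i,j}\cong \Sym^i(Z)\times\Sym^j(U)$. To build it, regard $X=Z\sqcup U$ as a set-theoretic disjoint union of the closed subvariety $Z$ and the open subvariety $U$, and decompose $X^n=\bigsqcup_{f}X_f$ over all functions $f\colon\{1,\dots,n\}\to\{Z,U\}$, where each $X_f=\prod_i f(i)$ is locally closed in $X^n$. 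The group $S_n$ permutes these pieces with orbits indexed by $i=\#f^{-1}(Z)$; the orbit of the standard piece $Z^i\times U^{n-i}$ has stabilizer $S_i\times S_{n-i}$, and its image under the finite quotient map $\pi\colon X^n\to \Sym^n(X)=X^n/S_n$ is $Y_{i,n-i}\defeq\pi(Z^i\times U^{n-i})$. Two facts identify this stratum: first, because $\pi$ is finite it is closed, hence carries $S_n$-invariant closed (resp.\ open) subsets to closed (resp.\ open) subsets, so $Y_{i,n-i}$ is locally closed; second, formation of quotients by finite groups commutes with products of varieties, so $(Z^i\times U^{n-i})/(S_i\times S_{n-i})\cong\Sym^i(Z)\times\Sym^{n-i}(U)$. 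Passing to classes, using that a locally closed partition yields a sum of classes (as noted in the preliminaries) and that the product in $K_0(\text{Var}/K)$ is represented by the reduced fibre product over $K$ — harmless in characteristic zero, cf.\ the remark comparing $K_0$ and $\tilde{K}_0$ — gives the displayed formula, and summing against $t^n$ together with the Cauchy rearrangement $\sum_n\bigl(\sum_{i+j=n}a_ib_j\bigr)t^n=(\sum_i a_it^i)(\sum_j b_jt^j)$ yields the factorization of $Z_{\text{mot}}$.

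The substantive work lies entirely in the geometric step: checking that the image under $\pi$ of an $S_n$-invariant locally closed subset of $X^n$ is locally closed in $\Sym^n(X)$, and that formation of $S_d$-quotients commutes with products of varieties — for the latter one reduces to affine charts, where it is the identity $(A\otimes_K B)^{S_i\times S_{n-i}}=A^{S_i}\otimes_K B^{S_{n-i}}$ (invariants commute with flat base change over a field), and reglues using the existence of symmetric powers of quasi-projective varieties recalled in Remark~\ref{bijection}. One also needs the harmless verification that the natural bijective morphism $\Sym^i(Z)\times\Sym^{n-i}(U)\to Y_{i,n-i}$ is an isomorphism of varieties — it is a locally closed immersion in characteristic zero — or, failing that, merely a piecewise isomorphism, which already suffices for equality of classes. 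Everything downstream of this — the coefficient identity, the power-series manipulation, and the extension-by-group-structure argument — is purely formal.
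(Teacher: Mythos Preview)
The paper does not supply its own proof of this proposition; it is stated with a citation to Musta\c{t}\u{a}'s lecture notes and used as a black box thereafter. Your argument is correct and is precisely the standard proof one finds in that reference: stratify $\Sym^n(X)$ according to how many of the $n$ points lie in $Z$, identify the $i$-th stratum with $\Sym^i(Z)\times\Sym^{n-i}(U)$ via the orbit--stabilizer description of the $S_n$-quotient, and read off the Cauchy product.
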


\begin{proposition}[{\cite[Proposition 7.32]{Zeta}}] \label{zeta2}

$Z_{\text{mot}}(X \times \AA_k^n, t) = Z_{\text{mot}}(X, \LL^n t)$ in $K_{0}(\text{Var}/K)$.

\end{proposition}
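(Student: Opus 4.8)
The plan is to read off the coefficient of $t^m$ on both sides: since replacing $t$ by $\LL t$ multiplies the coefficient of $t^m$ by $\LL^m$, the proposition for $\AA^1$ is equivalent to the identities $[\Sym^m(X \times \AA^1)] = \LL^m [\Sym^m(X)]$ in $K_0(\text{Var}/K)$ for all $m \ge 0$, and the general case $X \times \AA^n$ then follows by writing $X \times \AA^n = (X \times \AA^{n-1}) \times \AA^1$ and inducting on $n$. So I would fix $m$ and study the morphism $\varphi \colon \Sym^m(X \times \AA^1) \to \Sym^m(X)$ induced by the first projection, which on $\bar K$-points sends an effective divisor $\sum_i (x_i, t_i)$ of degree $m$ to $\sum_i x_i$.

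The idea is to stratify the target by the locally closed subsets $X_{d_1, \ldots, d_r}$ described above, indexed by partitions $d_1 + \cdots + d_r = m$, and to show that $[\varphi^{-1}(X_{d_1, \ldots, d_r})] = \LL^m [X_{d_1, \ldots, d_r}]$ for each of them; summing over all partitions then gives $[\Sym^m(X \times \AA^1)] = \LL^m[\Sym^m(X)]$. Over $X_{d_1,\ldots,d_r}$ a point is $\sum_i d_i x_i$ with the $x_i$ pairwise distinct, so any divisor $D$ in $\varphi^{-1}$ over it decomposes uniquely as $D = \sum_i D_i$ with $D_i$ effective of degree $d_i$ and supported on $\{x_i\} \times \AA^1$; since $\Sym^{d}(\AA^1_R) = \AA^d_R$ for every $K$-algebra $R$ (the fundamental theorem of symmetric functions, i.e. Theorem \ref{sym_poly} with $n = 1$), this decomposition works in families and identifies $\varphi^{-1}(X_{d_1,\ldots,d_r})$, after pulling back along the étale cover $P \to X_{d_1, \ldots, d_r}$ that orders the points $x_i$, with $P \times \AA^m$. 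Grouping the indices $i$ by the value of $d_i$, the cover $P \to X_{d_1,\ldots,d_r}$ is a torsor under $G = \prod_j S_{a_j}$ with $a_j = \#\{i : d_i = j\}$, acting freely, and $\varphi^{-1}(X_{d_1,\ldots,d_r}) = P \times_G \AA^m$ for the permutation action of $G$ on $\AA^m = \prod_j (\AA^j)^{a_j}$. For a torsor under a finite group this associated bundle is the vector bundle attached to the locally free sheaf $\bigoplus_j \big((\pi_j)_*\cO\big)^{\oplus j}$ assembled from the tautological finite étale covers $\pi_j$, hence it is a Zariski-locally trivial $\AA^m$-bundle and $[\varphi^{-1}(X_{d_1,\ldots,d_r})] = \LL^m[X_{d_1,\ldots,d_r}]$ by the scissor relations.

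The main obstacle is making the second paragraph precise, namely checking that the fibrewise decomposition $D = \sum_i D_i$ upgrades to an isomorphism of $X_{d_1,\ldots,d_r}$-schemes and that this isomorphism is $G$-equivariant for the permutation action described; here one really uses the relative form of Theorem \ref{sym_poly}, applied to the disjoint graphs of the coordinate maps $P \to X$ inside $X \times \AA^1 \times P$. The remaining ingredients — the reduction to $n = 1$, the identification $\Sym^d(\AA^1) \cong \AA^d$, the quotient presentation of the strata, and the fact that a Zariski-locally trivial $\AA^m$-bundle over $B$ has class $\LL^m[B]$ — are routine.
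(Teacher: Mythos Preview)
The paper does not give its own proof of this proposition: it is quoted directly from \cite[Proposition~7.32]{Zeta} without argument. Your approach---reducing to $n=1$, stratifying $\Sym^m(X)$ by the partition strata $X_{d_1,\ldots,d_r}$, and showing that the projection $\Sym^m(X\times\AA^1)\to\Sym^m(X)$ restricts to a Zariski-locally trivial $\AA^m$-bundle over each stratum via the associated vector bundle construction for the permutation action of $\prod_j S_{a_j}$---is exactly the standard proof given in the cited reference, and your sketch is correct; the only point worth tightening is the passage from the fiberwise decomposition $D=\sum_i D_i$ to a scheme-theoretic isomorphism, which follows cleanly from the relative identity $\Sym^m(Y_1\sqcup Y_2)\cong\bigsqcup_{a+b=m}\Sym^a(Y_1)\times\Sym^b(Y_2)$ applied to the universal family over~$P$.
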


\begin{example}
By the previous two propositions, $Z_{\text{mot}}(\AA_k^n, t) = Z_{\text{mot}}(\Spec{K}, {\LL}^n t) = 1/(1-{\LL}^n  t)$ and
\begin{equation} \label{P^n}
Z_{\text{mot}}(\PP_k^n, t) = Z_{\text{mot}}(\Spec{K}, t) \cdot \ldots \cdot Z_{\text{mot}}(\AA_k^n, t) = \frac{1}{(1-t)(1-{\LL}t) \cdots (1-{\LL}^nt)}.
\end{equation}
\end{example}

\begin{remark}

Let $K = \FF_q$ be a finite field. Then there is a natural homomorphism $K_0(\text{Var}/K) \to \ZZ$, taking $[X]$ to $|X(\FF_q)|$. It induces a map $\tilde{K}_{0}(\text{Var}/K)[[t]] \to \ZZ[[t]]$, which takes $Z_{\text{mot}}(X, t)$ to the Hasse-Weil zeta function $Z(X, t)$. Indeed, we have
\[
Z(X, t) = \prod_{x \in X_{\text{cl}}} \frac{1}{1-t^{\deg(x)}}=\prod_{x \in X_{\text{cl}}} (1+t^{\deg(x)}+t^{2\deg(x)}+\ldots)= \sum_{\alpha \in \Div^{\text{eff}}(X)} t^{\deg(\alpha)}.
\]
On the other hand, by remark \ref{bijection}
\[
\aligned
\Sym^{n}(X)(k) &= \{\Sym^{n}(X)(\bar{k})\}^{\Gal(\bar{k}/k)} = \{z \in \Div^{\text{eff}}(X_{\bar{k}}) \mid \deg{z} = n  \}^{\Gal(\bar{k}/k)} = \\ & = \{z \in \Div^{\text{eff}}(X) \mid \deg{z} = n \},
\endaligned
\]
hence $Z(X, t) = \sum_{n \ge 0} |\Sym^n(X)(k)| t^n$.
\end{remark}

The motivic zeta function of $X$ is called \textit{rational} if $Z_{\text{mot}}(X, t) = f(t)/g(t)$ for some polynomials $f,g \in K_0(\text{Var}/K)[t]$, where $g$ is invertible in $K_0(\text{Var}/K)[[t]]$. 

\subsection{Some classes in the Grothendieck ring.}

A morphism $f: X \to Y$ is called \textit{piecewise trivial fibration} with fiber $F$ if there exists a decomposition $Y = Y_1 \sqcup \ldots \sqcup Y_n$, such that $Y_i$ are locally closed subsets of $Y$ and $(X \times_Y Y_i)_{\text{red}} \cong (F \times_{\Spec{K}} Y_i)_{\text{red}}$ for all $i$. It is easy to show that in this case $[X] = [Y] \cdot [F]$ in $K_0(\text{Var}/K)$. In particular, for every vector bundle $E$ on $Y$ (resp. projective bundle $P$ on $Y$), of constant rank $d$, one has $[E] = [Y] \, \LL^d $ (resp. $[P] =  [Y] \cdot [\PP^d] = [Y](1+\LL+\cdots+\LL^d)$).

\begin{example}

Let $X$ be a smooth variety, and let $Y \subset X$ be a smooth closed subvariety of codimension $d$. Consider the blow-up $\Bl_Y(X)$ of $X$ along $Y$. Then the exceptional divisor $E$ is a projective bundle of constant rank $d-1$ over $Y$. Since $\Bl_Y(X) \, \backslash \, E \cong  X   \, \backslash \, Y$, it follows that
\[
[\Bl_Y(X)] = [X] + [Y]([\PP^{d-1}] - 1).
\]

\end{example}

While the Grothendieck ring of varieties is quite poorly understood, there is a natural  description for the quotient ring $K_0(\text{Var}/k)/(\LL)$.

Recall that irreducible varieties $X, Y$ are called \textit{stably birational} if $X \times \PP^{n}$ is birational to $Y \times \PP^{m}$ for some $n, m \ge 0$. Let $\text{SB}/K$ be the set of stably birational equivalence classes $\langle X \rangle$ of irreducible varieties over $K$. It is a commutative semigroup, with multiplication induced by the rule $\langle X \rangle \times \langle Y \rangle = \langle X \times_{\Spec{K}} Y \rangle$, and with the identity element $\langle \Spec{K} \rangle$. Let $\ZZ[\text{SB}/k]$ be the ring associated to the semigroup $\text{SB}/K$.

\begin{theorem}[{\cite{Lar_Lun}}] \label{Lar_Lun}

Let $K$ be a field of characteristic zero. There is a ring isomorphism:
\[
\Phi: K_0(\text{Var}/K)/(\LL) \to \ZZ[\text{SB}/K].
\]

\end{theorem}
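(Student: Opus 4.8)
The plan is to follow the original argument of Larsen and Lunts, whose two structural inputs are resolution of singularities and the weak factorization theorem, both available because $\Char K = 0$. The starting point is Bittner's presentation of $K_0(\text{Var}/K)$: as an abelian group it is generated by the classes $[X]$ of smooth projective (equivalently smooth complete) varieties $X$, modulo the relations $[\emptyset]=0$ and $[\Bl_Y X]-[E]=[X]-[Y]$, where $X$ is smooth projective, $Y\subset X$ is a smooth closed subvariety, and $E$ is the exceptional divisor of the blow-up. Granting this, I would define $\Phi$ by sending $[X]$, for $X$ smooth projective with irreducible components $X_1,\dots,X_m$, to $\sum_i\langle X_i\rangle\in\ZZ[\text{SB}/K]$, and then check the relations. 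The relation $[\emptyset]=0$ is immediate. For the blow-up relation: $\Bl_Y X\to X$ is birational, so $\langle \Bl_Y X\rangle=\langle X\rangle$, while $E\to Y$ is a projective bundle, hence $E$ is stably birational to $Y$ and $\langle E\rangle=\langle Y\rangle$; thus $\langle \Bl_Y X\rangle-\langle E\rangle=\langle X\rangle-\langle Y\rangle$ (worked out componentwise, the identity being trivial on components of $X$ disjoint from $Y$). Multiplicativity holds because for smooth projective $X,Y$ the product $X\times Y$ is smooth projective with $\langle X\times Y\rangle=\langle X\rangle\cdot\langle Y\rangle$ by the definition of multiplication in $\text{SB}/K$, and the unit maps to $\langle\Spec K\rangle$.

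Next I would verify that $\Phi$ annihilates $\LL$: since $[\PP^1]=1+\LL$ we have $\LL=[\PP^1]-[\Spec K]$, and $\Phi([\PP^1])=\langle\PP^1\rangle=\langle\Spec K\rangle$ because $\PP^1$ is rational, so $\Phi(\LL)=0$. Hence $\Phi$ descends to a ring homomorphism $\bar\Phi\colon K_0(\text{Var}/K)/(\LL)\to\ZZ[\text{SB}/K]$.

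To build the inverse I would define $\Psi\colon\ZZ[\text{SB}/K]\to K_0(\text{Var}/K)/(\LL)$ on the generating semigroup by $\Psi(\langle X\rangle)=[X']\bmod\LL$, where $X'$ is any smooth projective variety birational to $X$ (which exists by resolution and compactification, applied to each irreducible component), extended $\ZZ$-linearly. The real content is well-definedness, i.e.\ that stably birational smooth projective varieties have equal class modulo $\LL$. This splits into: (a) $[X\times\PP^n]=[X](1+\LL+\cdots+\LL^n)\equiv[X]\pmod{\LL}$, which is clear; and (b) birational smooth projective $X$ and $Y$ satisfy $[X]\equiv[Y]\pmod{\LL}$. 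Assertion (b) is where weak factorization is used: $X$ and $Y$ are connected by a finite chain of blow-ups and blow-downs along smooth centers (through smooth complete varieties), and a blow-up along a smooth center $Z$ of codimension $d$ changes the class by $[Z]([\PP^{d-1}]-1)=[Z](\LL+\cdots+\LL^{d-1})\equiv0\pmod{\LL}$, using the blow-up formula recalled in the excerpt. Combining (a) and (b) gives well-definedness, and $\Psi$ is a ring homomorphism since $\Psi(\langle X\rangle\langle Y\rangle)=\Psi(\langle X\times Y\rangle)=[X'\times Y']=[X'][Y']\equiv\Psi(\langle X\rangle)\Psi(\langle Y\rangle)$.

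Finally I would check that $\bar\Phi$ and $\Psi$ are mutually inverse. For a stable birational class $\langle X\rangle$ with smooth projective model $X'$, $\bar\Phi(\Psi(\langle X\rangle))=\bar\Phi([X'])=\langle X'\rangle=\langle X\rangle$; and for $X$ smooth projective with components $X_i$, $\Psi(\bar\Phi([X]))=\Psi(\sum_i\langle X_i\rangle)=\sum_i[X_i]=[X]\bmod\LL$, and since such classes generate $K_0(\text{Var}/K)/(\LL)$ by Bittner's presentation this gives $\Psi\circ\bar\Phi=\id$. The main obstacle is precisely the interface with Bittner's presentation and step (b)/weak factorization: without a description of $K_0(\text{Var}/K)$ via smooth projective varieties and blow-up relations, neither the construction of $\Phi$ nor the well-definedness of $\Psi$ would even make sense, and it is exactly these inputs that pin the argument to characteristic zero.
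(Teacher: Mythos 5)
Your proposal is correct and follows the standard Larsen--Lunts argument; the paper itself does not reprove the theorem but cites \cite{Lar_Lun} and notes in the accompanying remark that the key input is the birational (weak) factorization theorem, which is exactly what drives your step (b). Packaging the argument via Bittner's presentation of $K_0(\text{Var}/K)$ is a now-standard streamlining that is fully compatible with the reference the paper points to.
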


\begin{remark}

The original proof is over $K = \CC$, but it actually works over any field of characteristic zero. The main ingredient of the proof is the birational factorization theorem; see, for example, \cite[Theorem 0.3.1, remark 2]{AKMW}.

\end{remark}

\subsection{Twisted sheaves and Severi-Brauer schemes}

The notion of twisted sheaves gives a convenient way to study Severi-Brauer schemes over a general base. In this section we recall some basic facts about them. Our main reference is \cite[I.1-2]{caldararu}.

For an \'etale covering $\mathfrak{U} = (U_i \to X)$ of $X$ we put $U_{i_0 \ldots i_r} = U_{i_0} \times_{X} \cdots \times_{X} U_{i_r}$ and denote by $p^{ij}_{i}: U_{ij} \to U_i$ and $p^{ijk}_{ij}: U_{ijk} \to U_{ij}$ the projections on the $i$-th factor and the $(i, j)$-th factor, respectively (and similarly for other projections).

\begin{definition}

Let $(X, \cO_X)$ be a scheme and let $\alpha = (\alpha_{ijk}) \in \check{C}^2(X, \cO^{\ast}_X)$ be a \v{C}ech $2$-cocycle for an \'etale covering $\mathfrak{U} = (U_i \to X)_{i \in I}$. An $\alpha$\textit{-twisted sheaf} $(\cE, \varphi)$  is a pair $(\{\cE_i\}_{i \in I}, \{\varphi_{ij}  \}_{i, j \in I})$, where $\cE_i$ is a sheaf of $\cO_{U_i}$-modules and $\varphi_{ij}: (p^{ij}_j)^{\ast} \cE_j \to (p^{ij}_i)^{\ast} \cE_i$
are isomorphisms such that:
\begin{itemize}
    \item $\varphi_{ii} = \Id_{(p^{ii}_i)^{\ast} \cE_i}$ for all $i \in I$;
    \item $\varphi_{ij} = \varphi_{ji}^{-1}$ for all $i, j \in I$;
    \item $(p^{ijk}_{ij})^{\ast}(\varphi_{ij}) \circ (p^{ijk}_{jk})^{\ast}(\varphi_{jk}) \circ (p^{ijk}_{ki})^{\ast}(\varphi_{ki}) = \alpha_{ijk} \cdot \Id_{(p^{ijk}_{i})^{\ast} \cE_i} $ for all $i, j, k \in I$.
\end{itemize}

A \textit{morphism} $f: (\cE, \varphi) \to (\cE', \varphi')$ \textit{of $\alpha$-twisted sheaves} is the data of morphisms $(f_{i})_{i \in I}: \cE_i \to \cE'_i$ of $\cO_{U_i}$-modules such that the diagram

\[
\begin{tikzcd}
(p^{ij}_j)^{\ast} \cE_j \arrow{r}{\varphi_{ij}} \arrow{d}{(p^{ij}_j)^{\ast}(f_j)}
& (p^{ij}_i)^{\ast} \cE_i \arrow{d}{(p^{ij}_i)^{\ast}(f_i)} \\
(p^{ij}_j)^{\ast} \cE'_j \arrow{r}{\varphi'_{ij}}
& (p^{ij}_i)^{\ast} \cE'_i
\end{tikzcd}
\]
commutes for all $i, j \in I$.

\end{definition}

Clearly, the class of all $\alpha$-sheaves (given along the cover $\mathfrak{U}$) and morphisms defines a category that we denote by $\mathfrak{Mod}(X, \alpha, \mathfrak{U})$. It can be shown that if $\mathfrak{U'}$ is a refinement of $\mathfrak{U}$ on which $\alpha$ can be represented, then the categories $\mathfrak{Mod}(X, \alpha, \mathfrak{U})$ and $\mathfrak{Mod}(X, \alpha, \mathfrak{U'})$ are equivalent; we denote any of these categories by $\mathfrak{Mod}(X, \alpha)$. Moreover, if $\alpha$ and $\alpha'$ are in the same cohomology class, then the functor $(\cE, \varphi) \to (\cE, (\alpha^{-1} \alpha')\varphi)$ defines an equivalence of categories $\mathfrak{Mod}(X, \alpha)$ and  $\mathfrak{Mod}(X, \alpha')$, so we will use the notation $\mathfrak{Mod}(X, \alpha)$ for $\alpha \in \check{H}^2(X_{\et}, \Gm)$.

An $\alpha$-twisted sheaf $\cE$ is called quasi-coherent (coherent, locally free) if all the underlying sheaves are quasi-coherent (coherent, locally free, respectively). The corresponding category is denoted by $ \mathfrak{Qcoh}(X, \alpha)$ ($ \mathfrak{Coh}(X, \alpha)$, $\mathfrak{Loc}(X, \alpha)$, respectively).

\begin{definition}

An $X$-scheme $P$ is called a \emph{Severi--Brauer scheme} over $X$ if there exists a covering $\mathfrak{U} = (U_i \to X)_{i \in I}$ for the \'etale topology such that $P \times_X U_i \cong \PP^{n}_X \times_X U_i$ for some $n$ for all $i \in I$.

\end{definition}

If $\cE$ is an $\alpha$-twisted vector bundle (i. e. locally free $\alpha$-sheaf of finite rank) given along the cover $\mathfrak{U} = (U_i \to X)$, one can consider the schemes $\PP{\cE_i}$ over $U_i$ and isomorphisms $\textbf{P}(\varphi_{ij}): \PP{\cE_j} \times_{U_j} U_{ij} \to \PP{\cE_i} \times_{U_i} U_{ij}$. From the exactness of the sequence
\[
1 \to \Gm \to \GL_n \to \PGL_n \to 1
\]
we see that $\textbf{P}(\varphi_{ij}) \circ \textbf{P}(\varphi_{jk}) \circ \textbf{P}(\varphi_{ki})$ is the identity on $\PP{\cE_i} \times_{U_i} U_{ijk}$, which gives a descent datum on $X \times U$ (where $U = \bigsqcup U_i$). Using the ampleness of the anticanonical bundle on projective space, we conclude from \cite[VIII.7.8]{SGA1} that this data is effective. Thus, $\PP{\cE}$ is a Severi-Brauer scheme over $X$ with Brauer class $\alpha$. We have a bijection of sets:
\begin{gather*}
\{\text{isomorphism classes of Severi-Brauer schemes over } X \text{ of dimension } n  \} \leftrightarrow \\
\{\text{elements in } \check{H}^1(X_{\et}, \PGL_{n+1})  \} \leftrightarrow  \{\text{isomorphism classes of schemes }\\
 \PP{\cE} \text{ over } X, \text{ where } \cE \text{ is a twisted vector bundle of rank } n+1  \}.
\end{gather*}

\begin{proposition} \label{properties}

Let $\alpha, \alpha' \in \check{H}^2(X_{\et}, \Gm)$, and let $(\cE, \varphi)$ and $(\cF, \psi)$ be sheaves on $X$, twisted by $\alpha$ and $\alpha'$, respectively.

\begin{enumerate}
    \item The categories $\mathfrak{Mod}(X, \alpha)$ and $\mathfrak{Qcoh}(X, \alpha)$ are Abelian categories with enough injectives.
    \item  The categories $\mathfrak{Mod}(X, e)$ and $\mathfrak{Qcoh}(X, e)$ are the usual categories of $\cO_X$-modules and quasi-coherent sheaves on $X$, respectively.
    \item The sheaf $\cE \otimes \cF$, given by the gluing of $\cE_i \otimes \cF_i$ along $\varphi_i \otimes \psi_i$, is a well-defined $\alpha \alpha'$-sheaf on $X$. 
    \item Similarly, $\Sym^n(\cE)$ and $\bigwedge^n(\cE)$ are $\alpha^n$-sheaves on $X$.
    \item The sheaf $\underline{\Hom}(\cE, \cF)$, given by the gluing of $\underline{\Hom}(\cE_i, \cF_i)$ along $(\varphi_i^{-1})^{\vee} \circ \psi_i$, is a well-defined $\alpha^{-1} \alpha'$-sheaf on $X$. In particular, $\underline{\End}(\cE)$ is a regular (not twisted) sheaf on $X$.
    \item If $f: Y \to X$ is a morphism of schemes, then the sheaf $f^{\ast} \cE$, given by the data $(\{f^{\ast}\cE_i \}, \{f^{\ast}\varphi_{ij} \})$, is a well-defined $f^{\ast} \alpha$-sheaf on $Y$.
    
\end{enumerate}

\end{proposition}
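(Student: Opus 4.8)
The plan is to treat all six assertions uniformly: each one says that a natural construction, carried out termwise on the members $U_i$ of an \'etale covering, is equipped with canonical gluing data satisfying the appropriate twisted \v{C}ech cocycle identity, and every such verification is local on $X$. The only thing to keep track of is how the transition isomorphisms, and in particular their triple-overlap scalars $\alpha_{ijk}$, transform under the operation in question. For the routine unwindings I would then simply refer to \cite[I.1-2]{caldararu}.

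For parts (3)--(6) I would argue as follows. Given $\alpha$-data $(\varphi_{ij})$ on $\cE$ and $\alpha'$-data $(\psi_{ij})$ on $\cF$, one forms $\varphi_{ij}\otimes\psi_{ij}$ on $\cE_i\otimes\cF_i$; since pullback commutes with $\otimes$, the normalisation conditions ($\varphi_{ii}=\Id$, $\varphi_{ij}=\varphi_{ji}^{-1}$) are inherited termwise and on a triple overlap the composite picks up the scalar $\alpha_{ijk}\alpha'_{ijk}$, which gives (3). Part (4) is the same computation applied to $\Sym^n$ and $\bigwedge^n$, using that a homothety of ratio $\lambda$ on $\cE_i$ induces a homothety of ratio $\lambda^n$ on $\Sym^n\cE_i$ and on $\bigwedge^n\cE_i$, so the overlap scalar becomes $\alpha_{ijk}^n$. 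For (5) one glues $\underline{\Hom}(\cE_i,\cF_i)$ by $g\mapsto\psi_{ij}\circ g\circ\varphi_{ij}^{-1}$, i.e.\ via $(\varphi_{ij}^{-1})^\vee\otimes\psi_{ij}$ under the identification $\underline{\Hom}(\cE,\cF)\cong\cE^\vee\otimes\cF$; conjugation by $\varphi$ contributes $\alpha_{ijk}^{-1}$ and $\psi$ contributes $\alpha'_{ijk}$, so one lands in $\mathfrak{Mod}(X,\alpha^{-1}\alpha')$, and the case $\cF=\cE$, $\alpha'=\alpha$ yields an untwisted $\underline{\End}(\cE)$. For (6), $f^\ast\mathfrak{U}=(U_i\times_X Y\to Y)$ is an \'etale covering of $Y$, $f^\ast$ commutes with the fibre products defining the overlaps, and applying $f^\ast$ to the defining identities replaces $\alpha_{ijk}$ by $f^\ast\alpha_{ijk}$, so $f^\ast\cE$ is an $f^\ast\alpha$-sheaf.

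For (1) I would note that kernels, cokernels, images and arbitrary (co)limits are computed termwise on the $U_i$; since each $\varphi_{ij}$ is an isomorphism, these termwise objects inherit canonical $\alpha$-cocycle data, giving the abelian (resp.\ Grothendieck-abelian) structure. For enough injectives, I would identify $\mathfrak{Mod}(X,\alpha)$ with a category of modules on a ringed topos (modules on the \'etale site of $\bigsqcup U_i$ with the evident twisted descent datum), which has enough injectives by the general theory of module categories on a ringed topos, and I would check that $\mathfrak{Qcoh}(X,\alpha)$ is a Grothendieck category — filtered colimits are exact, and a generator is obtained by twisting a generator of the ordinary quasi-coherent sheaves on the $U_i$ and taking a direct sum over $I$ — hence it has enough injectives by Grothendieck's theorem. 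For (2) I would choose, when $\alpha=e$, the trivial cocycle on the one-element covering $(X\xrightarrow{\id}X)$, so that an $e$-twisted sheaf along this cover is literally an $\cO_X$-module, and then invoke effectivity of \'etale descent for (quasi-)coherent sheaves to compare with other representatives of the cover.

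The main obstacle I expect is twofold. First, getting the scalar conventions in (5) exactly right: one must check that the dual of a homothety of ratio $\lambda$ is a homothety of ratio $\lambda^{-1}$, so that the gluing $(\varphi_{ij}^{-1})^\vee\circ\psi_{ij}$ really produces the cocycle $\alpha^{-1}\alpha'$ and not $\alpha\alpha'^{-1}$. Second, the ``enough injectives'' clause in (1), which is the one place where a naive termwise argument is insufficient and one must appeal to the Grothendieck-category criterion (existence of a generator and exactness of filtered colimits). Every other point is a direct unwinding of the definitions.
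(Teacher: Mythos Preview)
Your proposal is correct and takes essentially the same approach as the paper: the paper's entire proof is the single sentence ``See \cite[I.1-2]{caldararu} for details,'' and you cite the same reference while additionally spelling out the routine verifications. Your explicit tracking of how the triple-overlap scalars transform under $\otimes$, $\Sym^n$, $\bigwedge^n$, $\underline{\Hom}$, and $f^\ast$ is exactly the content behind that citation, so there is nothing to correct or compare.
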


\begin{proof}

See \cite[I.1-2]{caldararu} for details.

\end{proof}

In fact, the category of twisted sheaves is equivalent to the category of sheaf modules over an Azumaya algebra.

\begin{theorem}[{\cite[Theorems 1.3.5-1.3.7]{caldararu}}]

For any $\alpha$-twisted vector bundle $\cE$ over X, $\underline{\End}(\cE)$ is an Azumaya algebra with Brauer class $\alpha$. Conversely, if $\cA$ is an Azumaya algebra over $X$ with Brauer class $\alpha = [\cA ] \in \Br(X)$, then there exists an  $\alpha$-twisted vector bundle $\cE$ such that $\cA \cong \underline{\End}(\cE)$. For every such $\cE$ the functor
\[
-\otimes_{\cO_X} \cE^{\vee}: \, \mathfrak{Mod}(X, \alpha) \to \mathfrak{Mod}\text{-}\cA
\]
defines an equivalence between the category $\mathfrak{Mod}(X, \alpha)$ of $\alpha$-twisted sheaves and the category $\mathfrak{Mod}\text{-}\cA$ of sheaves of right $\cA$-modules on $X$.

\end{theorem}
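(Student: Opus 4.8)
The plan is to reduce every assertion to an \'etale-local computation, in which an $\alpha$-twisted vector bundle becomes a trivial bundle and $\underline{\End}$ becomes a matrix algebra, and then to glue the classical Morita equivalence by \'etale descent. Fix an \'etale cover $\mathfrak{U} = (U_i \to X)$ along which $\cE = (\{\cE_i\}, \{\varphi_{ij}\})$ is given, refined so that each $\cE_i \cong \cO_{U_i}^{\,n+1}$ with a chosen basis. By Proposition \ref{properties}(5) the sheaf $\cA := \underline{\End}(\cE)$ is an ordinary (untwisted) sheaf of $\cO_X$-algebras whose restriction to $U_i$ is $\underline{\End}(\cO_{U_i}^{\,n+1}) \cong \Mat_{n+1}(\cO_{U_i})$; since being Azumaya is an \'etale-local property, $\cA$ is Azumaya of degree $n+1$. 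Its descent data along $\mathfrak{U}$ is $g \mapsto \varphi_{ij}\, g\, \varphi_{ij}^{-1}$, i.e. the $\PGL_{n+1}$-cocycle $\overline{\varphi_{ij}}$ obtained from $(\varphi_{ij})$; the third condition in the definition of an $\alpha$-twisted sheaf reads exactly $\varphi_{ij}\varphi_{jk}\varphi_{ki} = \alpha_{ijk}$, so the image of $[\overline{\varphi_{ij}}]$ under the connecting map $\check{H}^1(X_{\et}, \PGL_{n+1}) \to \check{H}^2(X_{\et}, \Gm)$ of the sequence $1 \to \Gm \to \GL_{n+1} \to \PGL_{n+1} \to 1$ equals $[\alpha]$. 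By definition this is $[\cA] \in \Br(X)$, which proves the first assertion.

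For the converse, let $\cA$ be Azumaya with $[\cA] = \alpha$. Choosing an \'etale cover on which $\cA|_{U_i} \cong \Mat_{n+1}(\cO_{U_i}) = \underline{\End}(\cO_{U_i}^{\,n+1})$ and fixing such isomorphisms $\theta_i$, the automorphism $\theta_i\theta_j^{-1}$ of $\Mat_{n+1}(\cO_{U_{ij}})$ is inner by the Skolem--Noether theorem, say conjugation by a section $\overline{\varphi_{ij}} \in \PGL_{n+1}(U_{ij})$. Lifting $\overline{\varphi_{ij}}$ to $\varphi_{ij} \in \GL_{n+1}(U_{ij})$ after a further refinement, the section $\alpha_{ijk} := \varphi_{ij}\varphi_{jk}\varphi_{ki}$ is a $\Gm$-valued $2$-cocycle whose class is $\alpha$, and $\cE := (\{\cO_{U_i}^{\,n+1}\}, \{\varphi_{ij}\})$ is then an $\alpha$-twisted vector bundle with $\underline{\End}(\cE) \cong \cA$ by construction.

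For the equivalence, first note the functor is well defined: $\cE^\vee = \underline{\Hom}(\cE, \cO_X)$ is $\alpha^{-1}$-twisted by Proposition \ref{properties}(5), hence $\cF \otimes_{\cO_X} \cE^\vee$ is untwisted by Proposition \ref{properties}(3) for any $\cF \in \mathfrak{Mod}(X, \alpha)$, and it carries a right $\cA = \underline{\End}(\cE)$-action through the second factor. Define the candidate quasi-inverse $\mathcal{M} \mapsto \mathcal{M} \otimes_{\cA} \cE$, where $\cE$ is viewed as a left $\cA$-module; being built from the $\alpha$-twisted sheaf $\cE$, this is again $\alpha$-twisted. It then suffices to produce natural isomorphisms $\cF \otimes_{\cO_X} \cE^\vee \otimes_{\cA} \cE \xrightarrow{\sim} \cF$ and $\mathcal{M} \otimes_{\cA} \cE \otimes_{\cO_X} \cE^\vee \xrightarrow{\sim} \mathcal{M}$; both are maps of (twisted) sheaves, hence may be checked after pullback to $\mathfrak{U}$, where $\cE$ becomes $\cO^{\,n+1}$ and $\cA$ becomes $\Mat_{n+1}(\cO)$, and the claims reduce to the classical Morita isomorphisms $\cO^{\,n+1} \otimes_{\Mat_{n+1}(\cO)} (\cO^{\,n+1})^\vee \cong \cO$ and $(\cO^{\,n+1})^\vee \otimes_{\cO} \cO^{\,n+1} \cong \Mat_{n+1}(\cO)$.

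The main obstacle is the bookkeeping in this last step: one must verify that the left $\cA$-module structure on $\cE$ glues compatibly with the $\varphi_{ij}$, that forming $\otimes_{\cA}$ over the \emph{untwisted} algebra $\cA$ introduces no spurious cocycle, and that the Morita unit and counit are compatible with all the descent isomorphisms so that the local identifications above globalize to maps of twisted sheaves. The Skolem--Noether lifting in the converse direction is a secondary subtlety, since one refines the cover twice --- once to split $\cA$ and once to lift from $\PGL_{n+1}$ to $\GL_{n+1}$ --- but this is routine.
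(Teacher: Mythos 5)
The paper does not prove this statement; it is quoted from C\u{a}ld\u{a}raru's thesis with a citation and used as a black box, so there is no in-paper argument to compare against. Your sketch reproduces the standard proof from that reference and is, as far as it goes, correct: you identify $\underline{\End}(\cE)$ as the algebra glued along the $\PGL_{n+1}$-valued cocycle $\overline{\varphi}_{ij}$, read off from the twisted-cocycle condition $\varphi_{ij}\varphi_{jk}\varphi_{ki}=\alpha_{ijk}$ that the image of $[\overline{\varphi}_{ij}]$ under the connecting map $\check{H}^1(X_{\et},\PGL_{n+1})\to\check{H}^2(X_{\et},\Gm)$ is $\alpha$, i.e.\ $[\cA]=\alpha$; you run this backwards using the sheaf-level Skolem--Noether identification $\underline{\Aut}(\Mat_{n+1})\cong\PGL_{n+1}$ followed by a lift to $\GL_{n+1}$ on a refinement; and you check the Morita unit and counit after pulling back to the trivializing cover. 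The ``bookkeeping'' you flag at the end is genuinely where the content lies, but it does go through precisely because $\cA$, its left action on $\cE$, and the $\varphi_{ij}$ all arise from the same transition data, so the local Morita isomorphisms automatically intertwine the descent isomorphisms. Two very minor points worth tightening if this were written out in full: after lifting to $\GL_{n+1}$ one should normalize so that $\varphi_{ii}=\Id$ and $\varphi_{ij}=\varphi_{ji}^{-1}$, to match the paper's definition of an $\alpha$-twisted sheaf; and the second local Morita isomorphism should be stated as $\cE\otimes_{\cO}\cE^{\vee}\cong\cA$ as $\cA$-bimodules (you wrote the factors in the opposite order), since that is what is needed for $\mathcal{M}\otimes_{\cA}\cE\otimes_{\cO_X}\cE^{\vee}\cong\mathcal{M}\otimes_{\cA}\cA\cong\mathcal{M}$.
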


\begin{corollary}

There exists an $\alpha$-twisted vector bundle on $X$ if and only if $\alpha \in \Br(X)$.

\end{corollary}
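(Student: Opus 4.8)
The plan is to obtain the corollary directly from the theorem immediately preceding it, once the conventions are pinned down. Here $\Br(X)$ denotes the group of Brauer classes $[\cA]$ of Azumaya algebras $\cA$ on $X$, viewed inside $\check{H}^2(X_{\et},\Gm)$, and ``twisted vector bundle'' means a locally free $\alpha$-twisted sheaf of finite, locally constant, positive rank. With these conventions both implications are essentially formal.

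For the ``only if'' direction I would start from an $\alpha$-twisted vector bundle $\cE$ on $X$; after restricting to a connected component we may assume $\cE$ has constant rank $n \ge 1$. The preceding theorem then asserts that $\underline{\End}(\cE)$ is an Azumaya algebra whose Brauer class is $\alpha$, so $\alpha = [\underline{\End}(\cE)] \in \Br(X)$ by the very definition of $\Br(X)$. Alternatively one may invoke the construction of $\PP\cE$ recalled above, which exhibits a Severi--Brauer scheme over $X$ of Brauer class $\alpha$. For the ``if'' direction, given $\alpha \in \Br(X)$ one chooses an Azumaya algebra $\cA$ with $[\cA] = \alpha$; the existence clause of the preceding theorem produces an $\alpha$-twisted vector bundle $\cE$ with $\cA \cong \underline{\End}(\cE)$, which is exactly what is required.

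The only step that is not purely formal --- and hence the only candidate for a real obstacle --- is the fact used in the ``if'' direction that every class in $\Br(X)$ is the Brauer class of some Azumaya algebra. Under the present convention this holds by definition. If instead one took $\Br(X)$ to be the torsion subgroup of $\check{H}^2(X_{\et},\Gm)$, the converse would additionally require the theorem of de Jong, extending Gabber, that on a scheme admitting an ample invertible sheaf --- in particular on the quasi-projective varieties relevant to this paper --- every torsion cohomological Brauer class comes from an Azumaya algebra. So under either reading the corollary is available, but it is the cohomological reading that carries the genuine content.
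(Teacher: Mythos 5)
Your argument is exactly the one the paper intends: the corollary follows immediately by reading off the two directions of the preceding theorem, namely that $\underline{\End}(\cE)$ is an Azumaya algebra of class $\alpha$ for any $\alpha$-twisted vector bundle $\cE$, and conversely that every Azumaya algebra of class $\alpha$ arises as $\underline{\End}(\cE)$ for some such $\cE$. Your additional remark correctly identifies that the content of the ``if'' direction depends on taking $\Br(X)$ to mean Brauer classes of Azumaya algebras (as the paper does), with Gabber--de Jong only needed under the cohomological reading.
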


Given an Azumaya algebra $\cA$, we shall denote the corresponding Severi-Brauer scheme by $X_{\cA}$.

Similarly as for $\PP{\cE}$, one can define the Grassmannian $\Gr^{e}(\cE)$ of a twisted vector bundle $\cE$, which is the same as the $e$-th Severi-Brauer scheme attached to the Azumaya algebra $\underline{\End}(\cE)$.

\begin{proposition}[{\cite[Proposition 16]{Litt}}] \label{linebundle}

There exists an $\alpha$-twisted line bundle on $X$ if and only if $\alpha  = e \in \Br(X)$.

\end{proposition}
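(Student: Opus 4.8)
The plan is to read off both implications directly from the Čech description of twisted sheaves, treating an $\alpha$-twisted line bundle as a geometric incarnation of a cochain trivializing $\alpha$. For the \textbf{``if''} direction, assume $\alpha = e$ in $\Br(X)$, i.e.\ $\alpha$ is a coboundary in $\check{H}^2(X_\et, \Gm)$. After passing to a refinement of the \'etale covering $\mathfrak{U} = (U_i \to X)$ if necessary, write $\alpha_{ijk} = g_{ij} g_{jk} g_{ki}$ for a normalized cochain $(g_{ij}) \in \check{C}^1(X, \cO^{\ast}_X)$ with $g_{ii} = 1$ and $g_{ji} = g_{ij}^{-1}$. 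Then set $\cL_i = \cO_{U_i}$ and let $\varphi_{ij}\colon \cO_{U_{ij}} \to \cO_{U_{ij}}$ be multiplication by $g_{ij}$. The normalization gives $\varphi_{ii} = \Id$ and $\varphi_{ij} = \varphi_{ji}^{-1}$, and the twisted cocycle condition reads precisely $\varphi_{ij}\varphi_{jk}\varphi_{ki} = g_{ij} g_{jk} g_{ki} = \alpha_{ijk}$, so $(\{\cL_i\}, \{\varphi_{ij}\})$ is a locally free rank-one $\alpha$-twisted sheaf.

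For the \textbf{``only if''} direction, let $\cL = (\{\cL_i\}, \{\varphi_{ij}\})$ be an $\alpha$-twisted line bundle along $\mathfrak{U}$. First refine $\mathfrak{U}$: for each $i$, choose a Zariski open cover of $U_i$ on which the line bundle $\cL_i$ becomes free; the resulting cover is still an \'etale cover of $X$ refining $\mathfrak{U}$, and pulling $\cL$ back to it yields an $\alpha$-twisted line bundle (for the pulled-back representative of $\alpha$, which lies in the same class) all of whose components are free of rank one. Fixing trivializations $\cL_i \cong \cO_{U_i}$, each $\varphi_{ij}$ becomes multiplication by a unit $g_{ij} \in \Gamma(U_{ij}, \cO^{\ast}_X)$, and the twisted cocycle condition becomes $g_{ij} g_{jk} g_{ki} = \alpha_{ijk}$, i.e.\ $\alpha = \delta(g_{ij})$. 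Hence $\alpha$ is a coboundary and $\alpha = e$ in $\Br(X)$.

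Alternatively, one can bypass the explicit cochains using \cite[Theorems 1.3.5--1.3.7]{caldararu} together with Proposition~\ref{properties}(5): for any $\alpha$-twisted line bundle $\cL$, the sheaf $\underline{\End}(\cL) = \underline{\Hom}(\cL, \cL)$ is the untwisted algebra $\cO_X$ (conjugation acts trivially on the commutative sheaf $\underline{\End}(\cL_i) \cong \cO_{U_i}$), whose Brauer class is $e$; but that theorem identifies this class with $\alpha$, so $\alpha = e$. Conversely, applying the theorem to the rank-one Azumaya algebra $\cO_X$ (whose Brauer class is $e = \alpha$) produces an $\alpha$-twisted vector bundle $\cE$ with $\underline{\End}(\cE) \cong \cO_X$, which locally forces $\operatorname{rk}\cE = 1$, so $\cE$ is an $\alpha$-twisted line bundle. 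The statement is not deep; the only point requiring a little care is the refinement bookkeeping in the second paragraph --- verifying that the components of a twisted line bundle can be made free without changing the cohomology class of $\alpha$ --- and this is routine.
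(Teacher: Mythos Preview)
The paper does not give its own proof of this proposition; it simply records the statement with the citation \cite[Proposition 16]{Litt} and moves on. So there is nothing to compare against.

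Your argument is correct and is essentially the standard one. The \v{C}ech computation in the first two paragraphs is exactly how one unwinds the definition: an $\alpha$-twisted line bundle with locally free rank-one components is literally the data of a $1$-cochain whose coboundary is $\alpha$. The refinement step you flag is indeed the only point requiring care, and you handle it correctly --- passing to a Zariski refinement of each $U_i$ trivializing $\cL_i$ does not change the class of $\alpha$ in $\check{H}^2(X_\et,\Gm)$, by the usual invariance of \v{C}ech cohomology under refinement (and this is built into the paper's convention that $\mathfrak{Mod}(X,\alpha)$ is well-defined up to equivalence for $\alpha \in \check{H}^2(X_\et,\Gm)$). The alternative via $\underline{\End}(\cL) \cong \cO_X$ is also fine and is in fact closer in spirit to how the paper organizes the surrounding material.
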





\begin{corollary} \label{rank}

Let $\cE$ be an $\alpha$-twisted vector bundle of rank $n$. Then $\alpha^n = e$ in $\Br(X)$.

\end{corollary}
\begin{proof}

By proposition \ref{properties}, $\bigwedge^n{\cE}$ is an $\alpha^n$-twisted line bundle, hence $\alpha^n = e$ by proposition \ref{linebundle}.

\end{proof}

\begin{corollary} \label{section}

Let $\pi: B \to S$ be a Severi-Brauer scheme over $S$. If $\pi$ admits a section, then $B = \PP{\cE}$ for some vector bundle $\cE$ over $S$.

\end{corollary}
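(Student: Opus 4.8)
The natural approach, given the machinery just set up, is to extract from a section an $\alpha$-twisted line bundle on $S$ and then invoke Proposition~\ref{linebundle}. Since $\pi\colon B\to S$ is a Severi--Brauer scheme, by the correspondence recalled above we may write $B=\PP(\cE)$ for an $\alpha$-twisted vector bundle $\cE$ on $S$ of rank $\dim(B/S)+1$, given along some \'etale cover $\mathfrak U=(U_i\to S)_{i\in I}$ by data $(\{\cE_i\},\{\varphi_{ij}\})$ with \v{C}ech $2$-cocycle $(\alpha_{ijk})$, where $\alpha\in\Br(S)$ is the Brauer class of $B$. It suffices to prove that the existence of a section forces $\alpha=e$: once $\alpha=e$, the cocycle $(\alpha_{ijk})$ is a coboundary, so replacing each $\varphi_{ij}$ by $\beta_{ij}\varphi_{ij}$ for a suitable $1$-cochain $(\beta_{ij})$ trivializes it and turns $\cE$ into an honest vector bundle on $S$; this does not change $\PP(\cE)$ because $\mathbf{P}(\beta_{ij}\varphi_{ij})=\mathbf{P}(\varphi_{ij})$ ($\mathbf{P}$ kills scalars), so $B=\PP(\cE)$ for an honest bundle, which is the assertion.

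So let $s\colon S\to B$ be a section, $\pi\circ s=\id_S$. For each $i$, restrict $s$ over $U_i$ and pull back, along $s|_{U_i}\colon U_i\to\PP(\cE_i)$, the tautological surjection onto $\cO_{\PP(\cE_i)}(1)$; since $s|_{U_i}$ is a section of $\PP(\cE_i)\to U_i$, this yields a surjection $\cE_i\twoheadrightarrow\cL_i$ of $\cO_{U_i}$-modules onto an invertible sheaf $\cL_i$. Each $\varphi_{ij}$ induces an isomorphism $\psi_{ij}\colon(p^{ij}_j)^{\ast}\cL_j\to(p^{ij}_i)^{\ast}\cL_i$ compatible with these surjections, and because a scalar endomorphism of $\cE_i$ acts by the same scalar on its quotient $\cL_i$, the cocycle identity $(p^{ijk}_{ij})^{\ast}\varphi_{ij}\circ(p^{ijk}_{jk})^{\ast}\varphi_{jk}\circ(p^{ijk}_{ki})^{\ast}\varphi_{ki}=\alpha_{ijk}\cdot\id$ descends to $\psi_{ij}\circ\psi_{jk}\circ\psi_{ki}=\alpha_{ijk}\cdot\id$. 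Thus $(\{\cL_i\},\{\psi_{ij}\})$ is an $\alpha$-twisted line bundle on $S$, and Proposition~\ref{linebundle} forces $\alpha=e$ in $\Br(S)$, which finishes the proof.

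Equivalently, one may argue globally on $B$: the sheaves $\cO_{\PP(\cE_i)}(1)$, glued along the isomorphisms induced by the $\varphi_{ij}$, assemble into a $\pi^{\ast}\alpha$-twisted line bundle $\cO_B(1)$ on $B$; the subtle point is that a scalar automorphism $\alpha_{ijk}\cdot\id_{\cE_i}$ acts as the identity on $\PP(\cE_i)$ but as multiplication by $\alpha_{ijk}$ on $\cO_{\PP(\cE_i)}(1)$ (being the induced map on the quotient of $\pi^{\ast}\cE_i$), so the gluing of $\cO_B(1)$ has cocycle $\pi^{\ast}(\alpha_{ijk})$ rather than the trivial one. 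Then $s^{\ast}\cO_B(1)$ is, by Proposition~\ref{properties}(6) and $s^{\ast}\pi^{\ast}\alpha=(\pi s)^{\ast}\alpha=\alpha$, an $\alpha$-twisted line bundle on $S$, and one concludes as above. This twist computation --- that the relative $\cO(1)$ of a twisted projective bundle is genuinely $\pi^{\ast}\alpha$-twisted, because scalars act trivially on the projectivization but not on $\cO(1)$ --- is really the only content; granting it, the corollary is a direct consequence of the cited facts, so I do not anticipate a serious obstacle.
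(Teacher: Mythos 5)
Your proof is correct and takes essentially the same route as the paper: a section of $\pi$ produces an $\alpha$-twisted line bundle on $S$ (the paper speaks of a line sub-bundle of $\cE$, you of the tautological quotient --- an immaterial choice of convention for $\PP(\cE)$), and Proposition~\ref{linebundle} then forces $\alpha = e$. Your cocycle-level verification that the resulting line bundle really carries the twist $\alpha$, and the subsequent remark that an $e$-twisted bundle may be taken to be an honest bundle without changing its projectivization, are details the paper's two-line proof elides.
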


\begin{proof}

Write $B =\PP{\cE}$ where $\cE$ is an $\alpha$-twisted bundle on $S$. The section to $\pi$ defines an $\alpha$-twisted line sub-bundle of $\cE$. Hence $\alpha = e$ by proposition \ref{linebundle}.

\end{proof}

The following proposition gives a description of the structure of Azumaya algebras over the spectrum of a field in terms of twisted sheaves.

\begin{proposition}[{\cite[Corollaries 19-21]{Litt}}] \label{fields}

Let $X = \Spec{K}$, where $K$ is a field.
\begin{enumerate}
    \item Let $\cE$ be an $\alpha$-twisted vector bundle over $X$. Then $\cE$ is simple if and only if $\End{\cE}$ is a division algebra over $K$.
    \item For any $\alpha \in \Br(X)$ there exists a unique isomorphism class $\cE_{\text{min}}$ of non-zero simple $\alpha$-twisted vector bundles over $X$.
    \item For any $\alpha$-twisted vector bundle $\cE$ over $X$ there exists $r \in \ZZ_{\ge 0}$ such that $\cE \cong \cE_{\text{min}}^{\oplus r}$. 
\end{enumerate}

\end{proposition}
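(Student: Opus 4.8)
The plan is to reduce all three statements to the classical structure theory of finite-dimensional modules over a central simple algebra, by transporting the problem through the equivalence of the preceding theorem.

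In part (1) the hypothesis that $\cE$ is an $\alpha$-twisted vector bundle forces $\alpha\in\Br(X)$, and in parts (2)--(3) this is assumed. Fix a non-zero $\alpha$-twisted vector bundle $\cE_0$ over $X=\Spec K$ --- one exists because $\alpha\in\Br(X)$ (the corollary above) --- and put $\cA=\underline{\End}(\cE_0)$. Since $X$ is the spectrum of a field, this Azumaya algebra is a finite-dimensional central simple $K$-algebra. By the preceding theorem the functor $-\otimes_{\cO_X}\cE_0^{\vee}\colon\mathfrak{Mod}(X,\alpha)\to\mathfrak{Mod}\text{-}\cA$ is an equivalence of abelian categories; being given by tensoring with the locally free sheaf $\cE_0^{\vee}$ it is exact, and it restricts to an equivalence between the category of $\alpha$-twisted vector bundles and the category of finitely generated (equivalently, finite-dimensional over $K$) right $\cA$-modules. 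Write $M=\cE\otimes_{\cO_X}\cE_0^{\vee}$ for the module attached to an $\alpha$-twisted sheaf $\cE$. The equivalence identifies the $K$-algebra $\End(\cE)=\Gamma(X,\underline{\End}(\cE))$ with $\End_{\cA}(M)$, and identifies subobjects of $\cE$ in $\mathfrak{Mod}(X,\alpha)$ with $\cA$-submodules of $M$; since $M$ is finite-dimensional over $K$, every such submodule is again finite-dimensional, so over $\Spec K$ there is no ambiguity in the notion of simplicity of $\cE$.

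Now invoke Artin--Wedderburn. The ring $\cA$ is semisimple, so up to isomorphism it has a unique simple right module $S$, every finitely generated right $\cA$-module is isomorphic to $S^{\oplus r}$ for a unique integer $r\ge 0$, and $D'\defeq\End_{\cA}(S)$ is a division ring by Schur's lemma, necessarily finite-dimensional over $K$. Transporting back, the $\alpha$-twisted vector bundle $\cE_{\min}$ corresponding to $S$ is, up to isomorphism, the unique non-zero simple object among $\alpha$-twisted vector bundles, since an exact equivalence of abelian categories preserves simple objects and reflects isomorphism; this is (2). Likewise every $\alpha$-twisted vector bundle $\cE$ satisfies $\cE\cong\cE_{\min}^{\oplus r}$ for some $r\ge 0$, which is (3).

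For (1): if $\cE$ is simple, then any non-zero endomorphism of $\cE$ has zero kernel and full image, hence is an isomorphism, so $\End(\cE)$ is a division ring --- and, being finite-dimensional over $K$, a division algebra. Conversely, writing $M\cong S^{\oplus r}$ we get $\End(\cE)\cong\End_{\cA}(S^{\oplus r})\cong M_r(D')$, which is a division algebra exactly when $r=1$ (for $r=0$ it is the zero ring, and for $r\ge 2$ it has zero divisors), i.e.\ exactly when $\cE\cong\cE_{\min}$ is simple. The ring-theoretic input here (Wedderburn semisimplicity, Schur's lemma) is entirely classical once one knows that an Azumaya algebra over a field is central simple; accordingly, the only real content of the argument, and the step I would be most careful about, is checking that Caldararu's equivalence is exact and restricts to finite-rank twisted sheaves as claimed, so that simplicity, endomorphism algebras, and direct-sum decompositions pass faithfully between the two sides.
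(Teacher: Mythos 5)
Your argument is correct. The paper does not prove this proposition --- it only cites \cite[Corollaries~19--21]{Litt} --- and Litt's proof (like the natural approach given the Morita equivalence stated just beforehand) is exactly the reduction you carry out: transport across $-\otimes_{\cO_X}\cE_0^{\vee}$ to finite-dimensional right modules over the central simple algebra $\cA=\underline{\End}(\cE_0)$, then apply Artin--Wedderburn and Schur's lemma. Your flagged concern is the right one to flag, and it is fine here: over $\Spec K$ any coherent $\alpha$-twisted sheaf becomes a finite-dimensional vector space on a finite \'etale trivializing cover, hence is automatically locally free, and since $\cA$ is semisimple every finitely generated right $\cA$-module is projective, so the equivalence does restrict as you use it.
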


For a Severi-Brauer variety $B = \PP{\cE}$ over $\Spec{K}$, the number $\rk(\cE_{\text{min}})$ is called the \textit{index} of $\PP{\cE}$, and will be denoted by $i(B)$. It is equal to the greatest common divisor of degrees of all $0$-cycles on $B$. The Severi-Brauer variety $B_{
\text{min}} \defeq \PP(\cE_{\text{min}})$ is the \textit{minimal twisted linear subvariety} of $B$.


For every Severi-Brauer scheme $B= \PP{\cE}$ we can associate its dual $B^{\text{op}} \defeq \PP(\cE^{\vee})$. If $B = X_{\cA}$, then $B^{\text{op}} = X_{\cA^{\text{op}}}$, where $\cA^{\text{op}}$ is an Azumaya algebra opposite to $\cA$.

\begin{theorem}[{\cite[Theorem 24]{Litt}}] \label{motivicdecomposition} 

Suppose $S$ is a variety over $\Spec{K}$ and 
\[
0 \to \cE_1 \to \cE_2 \to \cE_3 \to 0
\]
is a short exact sequence of $\alpha$-twisted vector bundles over $S$. Let $\rk(\cE_i) = r_i$. Then
\[
[\PP(\cE_2)] = [\PP(\cE_1)] + \LL^{r_1} [\PP(\cE_3)] = [\PP(\cE_3)] + \LL^{r_3} [\PP(\cE_1)] 
\]
in $K_0(\text{Var}/k)$.

\end{theorem}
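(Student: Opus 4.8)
The plan is to reduce the statement to a stratification of the projective bundle $\PP(\cE_2)$ according to the incidence relation with the sub-bundle $\PP(\cE_1)$. Geometrically, the surjection $\cE_2 \to \cE_3$ induces a closed embedding $\PP(\cE_1) \hookrightarrow \PP(\cE_2)$, and the open complement $U = \PP(\cE_2) \setminus \PP(\cE_1)$ is precisely the locus of lines in $\cE_2$ mapping isomorphically to a line in $\cE_3$; hence the natural projection $U \to \PP(\cE_3)$ is well-defined. The key point is that this projection is an affine bundle: over a point of $\PP(\cE_3)$ corresponding to a line $\ell \subset (\cE_3)_x$, the fibre consists of the lines in $(\cE_2)_x$ projecting onto $\ell$, which form a torsor under $\Hom(\ell, (\cE_1)_x)$, an affine space of dimension $r_1$.

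First I would make the embedding $\PP(\cE_1) \hookrightarrow \PP(\cE_2)$ precise in the twisted-sheaf language: locally on an \'etale cover trivializing $\alpha$, the sequence $0 \to \cE_1 \to \cE_2 \to \cE_3 \to 0$ is an honest short exact sequence of vector bundles, the embedding and the projection $U \to \PP(\cE_3)$ are the classical ones, and the gluing data (twisted by $\alpha$ on $\PP(\cE_i)$, but the scheme structures $\PP(\cE_i)$ themselves are genuine $S$-schemes since the $\PGL$-cocycle is what matters) patch together because they are $\GL$-equivariant, hence descend. Then I would check that $U \to \PP(\cE_3)$ is a piecewise trivial fibration with fibre $\AA^{r_1}$: one trivializes $\cE_1$ and $\cE_3$ over a common locally closed stratification of $S$ (possible since they are vector bundles of constant rank), and over each stratum the map $U \to \PP(\cE_3)$ becomes, after a further linear stratification of $\PP(\cE_3)$, a trivial $\AA^{r_1}$-bundle. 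By the discussion of piecewise trivial fibrations in the excerpt, this gives $[U] = \LL^{r_1}[\PP(\cE_3)]$.

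Combining with the scissor relation $[\PP(\cE_2)] = [\PP(\cE_1)] + [U]$ yields $[\PP(\cE_2)] = [\PP(\cE_1)] + \LL^{r_1}[\PP(\cE_3)]$, which is the first claimed identity. For the second, I would apply the same argument to the dual short exact sequence $0 \to \cE_3^{\vee} \to \cE_2^{\vee} \to \cE_1^{\vee} \to 0$ of $\alpha^{-1}$-twisted vector bundles, using that $\PP(\cE^{\vee})$ has the same class in $K_0(\text{Var}/K)$ as... — actually, cleaner: symmetry of the construction shows directly that the quotient map also exhibits $\PP(\cE_2) \setminus \PP(\cE_3^{\vee}\text{-locus})$... so I would instead simply observe that interchanging the roles via the dual sequence gives $[\PP(\cE_2^{\vee})] = [\PP(\cE_3^{\vee})] + \LL^{r_3}[\PP(\cE_1^{\vee})]$, and since $\PP(\cE)$ and $\PP(\cE^{\vee})$ need not be isomorphic I would instead run the first argument twice: once with the embedding $\PP(\cE_1)\hookrightarrow\PP(\cE_2)$ and once with a complementary stratification realizing $\PP(\cE_3)$ as a sub-locus — but the honest statement is that there is no embedding $\PP(\cE_3)\hookrightarrow \PP(\cE_2)$ in general, so the second equality must be derived purely formally. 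Hence the second identity follows by symmetry only after dualizing; I would state it as: applying the first identity to $0\to\cE_3^\vee\to\cE_2^\vee\to\cE_1^\vee\to 0$ gives the relation among $[\PP(\cE_i^\vee)]$, and a separate (easy) lemma that $[\PP(\cE)]$ depends only on $\rk\cE$ and on $S$ up to the stratification — no, that is false either. The correct and simplest route: prove both equalities are consequences of the single affine-bundle computation by noting that $[\PP(\cE_2)] - [\PP(\cE_1)] = \LL^{r_1}[\PP(\cE_3)]$ and, symmetrically replacing the sub-bundle by the locus of lines meeting the \emph{kernel filtration from the other side}, one gets $[\PP(\cE_2)] - [\PP(\cE_3^\vee)\text{-image}]$; I will present it via the dual sequence.

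The main obstacle I anticipate is the descent/patching bookkeeping: making sure that the locally defined affine-bundle structure on $U \to \PP(\cE_3)$, which a priori lives only on the trivializing \'etale cover, actually descends to a piecewise trivial fibration of genuine $K$-varieties — i.e. that the $\alpha$-twisting affects only the coherent sheaves and not the underlying schemes $\PP(\cE_i)$ or the morphisms between them. This is morally clear because the transition functions for $\PP(\cE_i)$ are valued in $\PGL$ and are mutually compatible (coming from a single $\GL$-cocycle for $\cE_2$ respecting the flag $\cE_1 \subset \cE_2$), but writing it carefully requires invoking effectivity of descent for the quasi-projective morphisms involved, exactly as in the construction of $\PP(\cE)$ recalled earlier in the excerpt via \cite[VIII.7.8]{SGA1}. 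The stratification arguments themselves are routine.
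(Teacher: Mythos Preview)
The paper does not give its own proof of this statement; it is quoted from \cite[Theorem~24]{Litt}. Your stratification approach --- embed $\PP(\cE_1)\hookrightarrow\PP(\cE_2)$ and exhibit the complement $U$ as an $\AA^{r_1}$-bundle over $\PP(\cE_3)$ --- is the standard one and correctly yields the first identity. The descent worry you flag is in fact harmless: the map $U\to\PP(\cE_3)$ is globally a torsor under a rank-$r_1$ vector bundle on $\PP(\cE_3)$ which, being a $\underline{\Hom}$ between two $\pi_3^{\ast}\alpha$-twisted sheaves, is \emph{untwisted} (Proposition~\ref{properties}(5)); torsors under vector bundles are Zariski-locally trivial, so $[U]=\LL^{r_1}[\PP(\cE_3)]$ without further bookkeeping.

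Where you get stuck is the second identity, and your diagnosis is exactly right: dualizing only produces the analogous relation among the $[\PP(\cE_i^{\vee})]$, and for twisted bundles one does \emph{not} know that $[\PP(\cE)]=[\PP(\cE^{\vee})]$ in general --- the paper itself later devotes Proposition~\ref{surfaces} to establishing $[B]=[B^{\mathrm{op}}]$ for Severi--Brauer surfaces by an ad hoc birational argument, so this certainly cannot be taken for granted. The idea you are missing is to reduce to the \emph{split} case. Stratify $S$ into locally closed affine pieces $S_j$; the obstruction to splitting the restricted sequence over $S_j$ lies in $H^{1}\bigl(S_j,\underline{\Hom}(\cE_3,\cE_1)|_{S_j}\bigr)$, and since $\underline{\Hom}(\cE_3,\cE_1)$ is again an untwisted coherent sheaf, this $H^{1}$ vanishes on each affine piece. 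Hence $\cE_2|_{S_j}\cong\cE_1|_{S_j}\oplus\cE_3|_{S_j}$ as $\alpha|_{S_j}$-twisted bundles, both summands sit inside $\cE_2|_{S_j}$ as sub-bundles, your stratification argument runs symmetrically in either direction over each $S_j$, and summing over $j$ gives both identities on $S$.
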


\begin{corollary} \label{motive_of_a_sum}

Let $\cE$ be an $\alpha$-twisted vector bundle on $S$ such that $\cE = \cF^{\oplus r}$ for some $\alpha$-twisted vector bundle $\cF$ of rank $n$. Then
\[
[\PP(\cE)] = [\PP(\cF)](1+ \LL^n+ \cdots + \LL^{(r-1)n}). 
\]

\end{corollary}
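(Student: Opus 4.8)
The plan is to apply Theorem~\ref{motivicdecomposition} inductively to a filtration of $\cE = \cF^{\oplus r}$ by $\alpha$-twisted subbundles. Concretely, for each $1 \le s \le r$ let $\cE_s \defeq \cF^{\oplus s} \subseteq \cE$, the sum of the first $s$ copies of $\cF$; this is an $\alpha$-twisted subbundle of rank $sn$, and the inclusion $\cF^{\oplus s} \hookrightarrow \cF^{\oplus(s+1)}$ fits into a short exact sequence of $\alpha$-twisted vector bundles
\[
0 \to \cF^{\oplus s} \to \cF^{\oplus (s+1)} \to \cF \to 0,
\]
where the quotient is the last summand $\cF$, of rank $n$. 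One must check that this is indeed a short exact sequence in the category $\mathfrak{Loc}(S,\alpha)$: on each chart $U_i$ it is the obvious split sequence of $\cO_{U_i}$-modules, and the gluing isomorphisms $\varphi_i^{\oplus(s+1)}$ are block-diagonal, so they respect the sub and quotient. This is routine and I would state it in a line.

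Next I would run the induction on $s$. Applying the second form of the identity in Theorem~\ref{motivicdecomposition} to the displayed sequence, with $\cE_1 = \cF^{\oplus s}$ (rank $sn$), $\cE_2 = \cF^{\oplus(s+1)}$, $\cE_3 = \cF$ (rank $n$), gives
\[
[\PP(\cF^{\oplus (s+1)})] = [\PP(\cF)] + \LL^{n}\,[\PP(\cF^{\oplus s})].
\]
Starting from $[\PP(\cF^{\oplus 1})] = [\PP(\cF)]$ and iterating, a straightforward induction yields
\[
[\PP(\cF^{\oplus r})] = [\PP(\cF)]\bigl(1 + \LL^{n} + \LL^{2n} + \cdots + \LL^{(r-1)n}\bigr),
\]
which is exactly the claimed formula since $\cE = \cF^{\oplus r}$.

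There is essentially no serious obstacle here: the whole content is packaged in Theorem~\ref{motivicdecomposition}, and the only thing to be careful about is that the filtration $\cF^{\oplus s} \subseteq \cF^{\oplus(s+1)}$ really lives in the category of $\alpha$-twisted bundles (so that the cited theorem applies) rather than merely being a filtration of the underlying sheaves chart by chart. Since the transition maps of $\cF^{\oplus s}$ are simply $s$-fold direct sums of those of $\cF$, compatibility is immediate, and the short exact sequence is even split as twisted sheaves. So the proof is a one-paragraph induction; I would write the base case, the inductive step via Theorem~\ref{motivicdecomposition}, and collapse the resulting geometric-type sum.
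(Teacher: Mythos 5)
Your proof is correct and is essentially the intended argument: the corollary is stated in the paper without a separate proof, precisely because it follows by the same one-step induction you write out, applying Theorem~\ref{motivicdecomposition} to the split exact sequences $0 \to \cF^{\oplus s} \to \cF^{\oplus(s+1)} \to \cF \to 0$ and summing the geometric progression. Your remark that the filtration lives in $\mathfrak{Loc}(S,\alpha)$ because the transition maps are block-diagonal is the right sanity check, though indeed routine.
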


\section{Automorphisms of symmetric powers}\label{section:automorphisms}

\subsection{Automorphisms of symmetric powers.}

Let $X$ be a quasi-projective variety over $K$.  Every automorphism $\varphi$ of $X$ yields an automorphism $\rho(f)$ of $\Sym^d(X)$ given by $(s_1, \ldots, s_d) 
\mapsto (f(s_1), \ldots, f(s_d))$. It is clear that the map $\rho: \Aut(X) \to \Aut(\Sym^d(X)), \; \varphi \mapsto \rho(\varphi)$ is a homomorphism of groups. In this section we prove the following:

\begin{theorem} \label{theorem:automorphisms}

Let $X$ be a smooth projective variety of dimension greater than $1$ over a field $K$ of characteristic zero. Assume that $\Pic(X_{\bar{K}}) = \ZZ$ and that $X_{\bar{K}}$ is simply connected. Then the natural map
\[
\rho: \Aut(X) \to \Aut(\Sym^d(X))
\]
is an isomorphism of groups.

\end{theorem}


\begin{lemma} \label{lemma:autofprod}

Let $X$ be a smooth irreducible projective variety over a field $K$ such that $\Pic(X) = \ZZ$ and $\Pic(X^d)$ is generated by the pullbacks of line bundles on $X$ for every $d > 0$. Then
\[
\Aut(X^d) \cong S_d \ltimes (\Aut(X))^{d}.
\]

\end{lemma}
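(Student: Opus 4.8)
\emph{The plan} is to exploit the induced action of $\Aut(X^d)$ on the Picard group, followed by a rigidity argument. Let $L$ be the ample generator of $\Pic(X)\cong\ZZ$, and let $p_i\colon X^d\to X$ be the $i$-th projection. Restricting a class $\sum_i a_i\,p_i^*L$ to a curve contained in the $i$-th factor (after base change to $\bar K$ if necessary) shows that $p_1^*L,\dots,p_d^*L$ are linearly independent, so by hypothesis $\Pic(X^d)=\bigoplus_{i=1}^{d}\ZZ\cdot p_i^*L$.

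\emph{Step 1 (extracting a permutation).} For $\varphi\in\Aut(X^d)$ the class $\varphi^*(p_i^*L)$ is the pullback of the ample bundle $L$ along $p_i\circ\varphi$, hence nef. Writing $\varphi^*(p_i^*L)=\sum_j a_{ij}\,p_j^*L$ and pairing (over $\bar K$) with a curve lying in the $j$-th factor forces $a_{ij}\ge 0$; applying the same to $\varphi^{-1}$ produces a non-negative integer inverse of the matrix $(a_{ij})$, so $(a_{ij})$ is a permutation matrix. This gives a map $\mu\colon\Aut(X^d)\to S_d$ which, with the convention $\varphi^*(p_i^*L)=p_{\mu(\varphi)^{-1}(i)}^*L$, is a group homomorphism, and it is split by $\sigma\mapsto\tau_\sigma$ with $\tau_\sigma(x_1,\dots,x_d)=(x_{\sigma^{-1}(1)},\dots,x_{\sigma^{-1}(d)})$, since $\tau_\sigma^*(p_i^*L)=p_{\sigma^{-1}(i)}^*L$.

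\emph{Step 2 (the kernel, the main point).} Suppose $\varphi\in\ker\mu$, i.e.\ $\varphi^*(p_i^*L)=p_i^*L$ for all $i$. Fix $i$ and put $f:=p_i\circ\varphi\colon X^d\to X$, so that $f^*L=p_i^*L$. For a closed point $a$ of $X$, the fibre $p_i^{-1}(a)$ is projective and $f^*L$ restricts to it as the trivial bundle; since $L$ is ample, a morphism from a projective variety that pulls $L$ back to a numerically trivial bundle is constant (its image contains no curve). Hence $f$ is constant on every fibre of $p_i$, so $f=\bar f_i\circ p_i$ for a morphism $\bar f_i\colon X\to X$ (take $\bar f_i$ to be the restriction of $f$ to a section of $p_i$). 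As a morphism into $X^d$ is determined by its composites with the $p_i$, this gives $\varphi=(\bar f_1,\dots,\bar f_d)$ acting coordinatewise. Running the argument for $\varphi^{-1}$ (which also fixes each $p_i^*L$) and composing shows $\bar f_i^{-1}$ exists, so $\bar f_i\in\Aut(X)$. Conversely a coordinatewise automorphism $(f_1,\dots,f_d)$ satisfies $(f_1,\dots,f_d)^*(p_i^*L)=p_i^*(f_i^*L)=p_i^*L$, because $f_i^*L$ is an ample generator of $\Pic(X)=\ZZ$ and hence equals $L$. Thus $\ker\mu=(\Aut X)^d$, embedded coordinatewise.

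\emph{Step 3 (assembling).} Steps 1--2 give a split short exact sequence $1\to(\Aut X)^d\to\Aut(X^d)\to S_d\to 1$ with last map $\mu$, and a direct computation shows that conjugation by $\tau_\sigma$ sends $(f_1,\dots,f_d)$ to $(f_{\sigma^{-1}(1)},\dots,f_{\sigma^{-1}(d)})$, i.e.\ permutes the coordinatewise factors; therefore $\Aut(X^d)\cong S_d\ltimes(\Aut X)^d$. The only genuinely delicate step is the kernel computation of Step 2, where the rigidity-type fact (a morphism out of a projective variety killing an ample class is constant) does the real work; the rest is formal, provided one is a little careful to carry out the linear-independence and nefness verifications after base change to $\bar K$, which is harmless.
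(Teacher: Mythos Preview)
Your proof is correct, and it reaches the same conclusion by a genuinely different mechanism than the paper's. Both arguments begin by analysing the induced action of $\Aut(X^d)$ on $\Pic(X^d)\cong\ZZ^d$, but diverge at the two key steps.

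\emph{Extracting the permutation.} The paper fixes a very ample $\mathcal{L}$ and computes $h^0$ of $\varphi^*(\pi_i^*\mathcal{L})=\bigotimes_j\pi_j^*(\mathcal{L})^{d_{ij}}$ via the K\"unneth formula; comparing with $h^0(X,\mathcal{L})$ forces exactly one $d_{ij}=1$ and the rest $0$. You instead use that $\varphi^*(p_i^*L)$ is nef and pair with curves in each factor to get $a_{ij}\ge 0$, then invoke the elementary fact that a non-negative integer matrix with non-negative integer inverse is a permutation matrix. Your route avoids any growth estimates on $h^0(X,\mathcal{L}^k)$ and needs only the most basic intersection theory over $\bar K$.

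\emph{Identifying the kernel.} The paper works with the concrete projective embedding $j:X\hookrightarrow\PP^N$ and tracks the change of basis $\psi^*$ induces on $H^0(X^d,\pi_i^*\mathcal{L})$, producing an explicit matrix in $\PGL_N$ and hence an automorphism $\psi_i$ of $X$. You replace this by a rigidity argument: once $f=p_i\circ\varphi$ satisfies $f^*L=p_i^*L$, its restriction to each fibre of $p_i$ pulls the ample $L$ back to $\cO$, so that restriction is constant and $f$ factors through $p_i$ (the diagonal section makes the factorisation explicit). This is cleaner and coordinate-free; the paper's version is more hands-on but exhibits the factor automorphisms by direct linear algebra.

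In short: the paper's proof is cohomological and explicit, yours is intersection-theoretic and uses rigidity. Both are sound; yours is arguably more conceptual, while the paper's gives a concrete recipe for reading off the $\psi_i$.
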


\begin{proof}

Let $\cL$ be a very ample line bundle on $X$ and let $j: X \to \PP(\mathrm{H}^{0}(X, \cL)^{\vee})$ be a closed immersion such that $j^{\ast}(\cO(1)) \cong \cL$. We denote by $\pi_i: X^d \to X$ the projection on the $i$-th factor.

The group $S_d \ltimes (\Aut(X))^{d}$ acts on $X^d$
by the rule $(x_1, 
\ldots, x_d) 
\mapsto (f_{\sigma^{-1}(1)}(x_{\sigma^{-1}(1)}),\ldots, $ $f_{\sigma^{-1}(d)}(x_{\sigma^{-1}(d)}))$, where $\delta \in S_d, \, f_i \in \Aut(X)$. It is easily seen that this action is faithful, so we get an inclusion $S_d \ltimes (\Aut(X))^{d} \hookrightarrow \Aut(X^d)$. 

Conversely, let $\varphi$ be an automorphism of $X^d$. It induces an automorphism of $\Pic(X^d)$, so  there are isomorphisms
\[
\varphi^{\ast}(\pi_i^{\ast}(\cL)) \cong \bigotimes_{j=1}^d \pi_j^{\ast}(\cL)^{d_{ij}}
\]
for some $d_{ij} \in \ZZ$. Since $\varphi$ is an automorphism and $\Gamma(X, \cO_X) = K$, it follows that $h^{0}(X^d, \varphi^{\ast}(\pi_i^{\ast}(\cL))) = h^{0}(X^d, \pi_i^{\ast}(\cL)) = h^{0}(X, \cL)$. Note that $h^0(X, \cL^{k}) > h^0(X, \cL)$ for $k > 1$ and $h^0(X, \cL^{k}) = 0$  for $k < 0$. The Kunneth formula implies that $h^0(X^d, \bigotimes_{j=1}^d \pi_j^{\ast}(\cL)^{d_{ij}}) = \prod_{j=1}^d h^0(X, \cL^{d_{ij}})$, so the equality $h^0(X^d, \bigotimes_{j=1}^d \pi_j^{\ast}(\cL)^{d_{ij}}) =  h^{0}(X, \cL)$ holds if and only if there exist $\sigma(i) \in \{1, \ldots, d \}$ such that $d_{ij} = 1$ for $j = \sigma(i)$ and $d_{ij} = 0$ otherwise. Hence there are isomorphisms $\varphi^{\ast}(\pi_i^{\ast}(\cL)) \cong \pi_{\sigma(i)}^{\ast}(\cL)$ for $\sigma \in S_d$.

Consider the automorphism $\widetilde{\sigma}: X^d \to X^d$, $(x_1, \ldots, x_d) \mapsto (\sigma^{-1}(x_1), \ldots, \sigma^{-1}(x_d))$, and put $\psi = \widetilde{\sigma}^{-1} \circ \varphi$. Then $\psi^{\ast}(\pi_i^{\ast}(\cL)) \cong \pi_i^{\ast}(\cL)$ for every $i$ and $\varphi = \widetilde{\sigma} \circ \psi$. Let $N = h^{0}(X, \cL)-1$, $\PP^N = \PP(\mathrm{H}^{0}(X, \cL)^{\vee})$, and let $(s_0, \ldots, s_N) = (j^{\ast}(x_0), 
\ldots, j^{\ast}(x_N))$ be a basis of $\mathrm{H}^{0}(X, \cL)$, where $x_0, 
\ldots, x_N$ are hyperplane sections of $\PP^N$. The pullbacks $(\pi_i^{\ast}(s_k))_{k=0}^{N}$ form a basis of $\mathrm{H}^{0}(X^d, \pi_{i}^{\ast}(\cL))$ and, since $\psi^{\ast}(\pi_i^{\ast}(\cL)) \cong \pi_i^{\ast}(\cL)$, the $t_{ik} = \psi^{\ast}(\pi_i^{\ast}(s_k))_{k=0}^{N}$ must be another basis of $\mathrm{H}^{0}(X^d, \pi_{i}^{\ast}(\cL))$, hence we can write $t_{ik} = \sum_{j = 0}^{N} a_{kj}^{i}\pi_i^{\ast}(s_j)$, where $||a_{kj}^{i}||_{0 \le j, k \le N} \in \GL_N(K)$. We denote by $\widetilde{\psi}_i$ an automorphisms of $\PP^N$ given by the matrix $||a_{kj}^{i}||$ as an element of $\PGL_N(K)$. Now consider the following diagram:
\[
\xymatrix@R=15pt@C=15pt{  & & X^d  \ar[rr]^{\psi}  \ar[dd]^{\pi_i} & &  X^d \ar@{^{(}->}[dr]^{j^{d}} \ar[dd]^{\pi_i}  & \\
& & &  & & 
(\PP^N)^d \ar[dd]^{\pi_i} \\
 & &  X \ar@{^{(}->}[dr]^{j} \ar@{-->}[rr]^{\psi_i} & & X  \ar@{^{(}->}[dr]^{j} &  \\
& & & \PP^N  \ar[rr]^{\widetilde{\psi}_i} & & \PP^N }
\]
One has $j \circ \pi_i \circ \psi = \widetilde{\psi}_i \circ j \circ \pi_i$, since both maps are given by the global sections $(t_{ik})_{k=0}^N$ of $\mathrm{H}^{0}(X^d, \pi_{i}^{\ast}(\cL))$. Hence there exists a unique automorphism $\psi_i$ of $X$ making the diagram commutative. Thus $\psi$ acts on each factor of $X^d$, so $\varphi = \widetilde{\sigma} \circ \psi$ defines an element of $S_d \ltimes (\Aut(X))^{d}$. The lemma follows.

\end{proof}

\begin{proof}[Proof of the theorem \ref{theorem:automorphisms}] Let $n = \dim{X}$. Since $n > 1$, it follows that for $d > 1$ the small diagonal $X_d \subset \Sym^{d}(X)$ is the most singular locus in the sense that for closed points $x \in X_d$ and $y \in \Sym^{d}(X) 
\, \backslash \, X_d$ one has $\dim_{\kk(x)} T_{x}(\Sym^d(X)) = C_{n+d}^d - 1 > \dim_{\kk(y)} T_{y}(\Sym^d(X))$ (in order to prove that one may use theorem \ref{sym_poly} and the fact that $\hat{\cO}_x \cong K[[X_1, 
\ldots, X_d]]^{S_d}$, where $X_i = (x_{i,1}, \ldots, x_{i, n})$ and $S_d$ act by permuting the $X_i$). In particular, every automorphism of $\Sym^{d}(X)$ preserves the small diagonal, hence we get a map $\eta: \Aut(\Sym^d(X)) \to \Aut(X)$.

To show that $\rho$ and $\eta$ are inverse to each other, it is enough to work with $X_{\bar{K}}$. Note that $X_{\bar{K}}$ is irreducible: if $X_1, \ldots, X_n$ are irreducible components of $X_{\bar{K}}$, then there is a surjection $\Pic(X_{\bar{K}}) \to \bigoplus_{i=1}^n \Pic(X_i)$ and the condition $\Pic(X_{\bar{K}}) = \ZZ$ implies that $n=1$.

From now on, we write $X$ instead of $X_{\bar{K}}$. Let $\pi: X^d \to \Sym^{d}(X)$ be the quotient map. Note that $S_d$ acts without fixed points on $\widetilde{X}_{1, \ldots, 1}$, so the restriction $\left.\pi \right|_{\widetilde{X}_{1, \ldots, 1}}: \widetilde{X}_{1, \ldots, 1} \to X_{1, \ldots, 1}$ is \'etale. Since $X_{\bar{K}}$ is simply connected and $\codim(X^d \, \backslash \, \widetilde{X}_{1, \ldots, 1}) > 1$, it follows that $\widetilde{X}_{1, \ldots, 1}$ is simply connected (\cite[X.3.3]{SGA1}). Thus $\widetilde{X}_{1, \ldots, 1}$ is a universal covering space of $X_{1, \ldots, 1}$.

Every automorphism $\varphi \in \Aut(\Sym^d(X))$ preserve the smooth locus $X_{1, \ldots, 1}$, hence it lifts to an automorphism of the universal cover $\widetilde{X}_{1, \ldots, 1}$. Since $\pi$ is the normalization of $\Sym^{d}(X)$ in $K(\widetilde{X}_{1, \ldots, 1})$, we see that $\varphi$ extends to an automorphism $\widetilde{\varphi}$ of $X^d$. As $X_{\bar{K}}$ is smooth and simply connected, its first Betti number equals zero, so $\mathrm{H}^{1}(X, \cO_X) = 0$ and $\Pic(X^d) = \Pic(X)^d = \ZZ^d$ (\cite[Exercise~III.12.6]{Ha}). By lemma \ref{lemma:autofprod}, $\Aut(X^d) \cong S_d \ltimes (\Aut(X))^{d}$, and the only  automorphisms to descend to $\Sym^d(X)$ are those lying in the subgroup $S_d \times \Aut(X)$, where the second factor is the image of $\Aut(X)$ in $(\Aut(X))^{d}$ under the diagonal embedding. But $S_d$ acts trivially on $\Sym^{d}(X)$, so $\varphi \in \Aut(X)$.

\end{proof}

\begin{remark}

If $X$ is as in the theorem, then one can compute $\Aut(\Sym^{d}(X^r))$ by the same argument as in the proof of the theorem. For instance,  $\Aut(\Sym^2(X \times X)) \cong S_2 \ltimes \Aut(X \times X)$, where the nontrivial automorphism corresponds to the permutation of the first and the third factor. In particular, it shows that the theorem is no longer true if $\Pic(X) \neq \ZZ$. 

Similarly, it is possible compute $\Aut(\Sym^{d}(X \times Y))$, where $X$ and $Y$ are projective varieties of different dimensions such that $\Pic(X) = \Pic(Y) = \ZZ$ and one of them is simply connected, but the computations are quite elaborate. 

\end{remark}

\begin{example}

Let $Y$ be a quadric in $\PP_{\CC}^3$ (so one has $Y \cong \PP_{\CC}^1 \times \PP_{\CC}^1$), and let $q$ be the quadratic form associated with $Y$. The nontrivial automorphism of $\Sym^2(Y)$ has a nice geometric interpretation. Note that taking orthogonal relative to the scalar product defined by $q$ yields an automorphism of the Grassmannian $\Gr_1(\PP^3)$. It preserves all the tangent spaces to $Y$ and hence defines a nontrivial automorphism of $\Sym^2(Y)$ such that its restriction to the diagonal is trivial.

\end{example}





\section{Some classes of Severi-Brauer varieties in \texorpdfstring{$K_0(\text{Var}/K)$}{TEXT}}

\subsection{General observations.}

We compute below some relations between the classes of Severi-Brauer varieties in $K_0(\text{Var}/K)$.

\begin{proposition} \label{Neo}

Let $B$ be a Severi-Brauer variety of dimension $m-1$ over a field $K$. Let $n = \dim(B_{\text{min}}) + 1$ and $r = m/n$. Then
\[
[B] = [B_{\text{min}}](1+\LL^{n}+\LL^{2n}+ \ldots + \LL^{(r-1)n}).
\]

\end{proposition}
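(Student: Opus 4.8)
## Proof proposal for Proposition \ref{Neo}

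The plan is to realize $B$ as the projectivization of a twisted vector bundle over $\Spec K$ that is a direct sum of copies of the minimal one, and then apply Corollary \ref{motive_of_a_sum}. Write $B = \PP(\cE)$ for some $\alpha$-twisted vector bundle $\cE$ of rank $m$ over $\Spec K$, where $\alpha \in \Br(K)$ is the Brauer class of $B$. By Proposition \ref{fields}(2)--(3), there is a unique simple $\alpha$-twisted vector bundle $\cE_{\text{min}}$ (up to isomorphism) and an integer $r' \ge 1$ with $\cE \cong \cE_{\text{min}}^{\oplus r'}$. By definition $i(B) = \rk(\cE_{\text{min}})$, and $B_{\text{min}} = \PP(\cE_{\text{min}})$, so $n = \dim(B_{\text{min}}) + 1 = \rk(\cE_{\text{min}})$. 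Comparing ranks, $m = \rk(\cE) = r' \cdot n$, so $r' = m/n = r$; in particular $n \mid m$ and the statement makes sense.

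With this identification in hand, Corollary \ref{motive_of_a_sum} applied to $\cE = \cE_{\text{min}}^{\oplus r}$ (with $\cF = \cE_{\text{min}}$ of rank $n$) gives exactly
\[
[\PP(\cE)] = [\PP(\cE_{\text{min}})](1 + \LL^{n} + \cdots + \LL^{(r-1)n}),
\]
which is the claimed formula $[B] = [B_{\text{min}}](1 + \LL^{n} + \LL^{2n} + \cdots + \LL^{(r-1)n})$.

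The only genuine content, therefore, is to justify the two facts used above: first, that every Severi-Brauer variety over a field arises as $\PP(\cE)$ for a twisted vector bundle $\cE$ over $\Spec K$ (this follows from the discussion after the definition of Severi-Brauer scheme together with the corollary that an $\alpha$-twisted vector bundle exists precisely when $\alpha \in \Br(X)$, using that over a field $\check{H}^2(X_{\et}, \Gm) = \Br(K)$); and second, the structural decomposition $\cE \cong \cE_{\text{min}}^{\oplus r}$, which is precisely Proposition \ref{fields}. I expect no real obstacle here — the main point is simply to check that the number-theoretic bookkeeping ($n = \rk \cE_{\text{min}}$, $r = m/n$, and that this $r$ agrees with the multiplicity $r'$ in Proposition \ref{fields}(3)) is consistent, which it is by comparing ranks. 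One should also remark that $i(B)$, the gcd of degrees of $0$-cycles on $B$, indeed equals $\rk(\cE_{\text{min}})$ — but this is taken as the definition in the text preceding the proposition, so nothing further is needed.
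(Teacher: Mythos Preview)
Your proposal is correct and follows essentially the same approach as the paper: write $B = \PP(\cE_{\text{min}}^{\oplus r})$ using Proposition \ref{fields}, then apply Corollary \ref{motive_of_a_sum}. The paper's proof is just a two-line version of what you wrote, omitting the rank bookkeeping that you spell out.
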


\begin{proof}

We have $B_{\text{min}} = \PP{\cE_{\text{min}}}$ for some simple twisted vector bundle $\cE_{\text{min}}$ on $\Spec{K}$ of rank $n$, and $B = \PP{(\cE_{\text{min}}^{\oplus r})}$. Thus the proposition follows from corollary \ref{motive_of_a_sum}.


\end{proof}

\begin{proposition} \label{class_of_product}

Let $B$ be a Severi-Brauer variety over $\Spec{K}$ of dimension $n$. Then for every $d \ge 0$
\begin{gather*}
[B^d] = [B] \cdot [\PP^{n}]^{d-1}, \\
[B \times B^{\text{op}}] = [B] \cdot [\PP^{n}] = [B^{\text{op}}] \cdot [\PP^{n}]
\end{gather*}
in $K_0(\text{Var}/K)$.

\end{proposition}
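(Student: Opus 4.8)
The plan is to reduce both identities to the motivic decomposition of Severi-Brauer schemes (Theorem \ref{motivicdecomposition}) applied over a suitable base, combined with the fact that over a field any twisted vector bundle is a direct sum of copies of the minimal one (Proposition \ref{fields}). For the first identity, the key observation is that although $B$ itself may have no rational point, the product $B^d$ does become more ``split'' in the following sense: write $B = \PP{\cE}$ for an $\alpha$-twisted vector bundle $\cE$ of rank $n+1$ over $\Spec K$, and consider the projection $p_1 : B^d \to B$ onto the first factor. I want to argue that $p_1$ is a piecewise trivial fibration with fiber $\PP^n$. The point is that after the faithfully flat base change $B \to \Spec K$, the pulled-back class $\alpha|_B$ becomes trivial in $\Br(B)$ (since $B = \PP{\cE}$ carries the tautological twisted line bundle, so $\cE|_B$ acquires a line subbundle, forcing $\alpha|_B = e$ by Proposition \ref{linebundle}); hence $B \times_K B^{\mathrm{op}}$, and more relevantly $B_B := B \times_K B$ viewed as a Severi-Brauer scheme over the first factor $B$, is of the form $\PP(\cF)$ for an honest (untwisted) vector bundle $\cF$ of rank $n+1$ on $B$ by Corollary \ref{section}. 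Therefore $[B \times_K B] = [B]\cdot[\PP^n]$ by the formula for projective bundles in the Grothendieck ring. Iterating $p_i : B^{d} \to B^{d-1}$ in the same way — each factor of $B$ splits the Brauer class — gives $[B^d] = [B]\cdot[\PP^n]^{d-1}$ by induction on $d$.

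For the second identity I would use the standard fact that $B \times_K B^{\mathrm{op}}$ is a split Severi-Brauer scheme over either factor. Concretely, $B = \PP{\cE}$ and $B^{\mathrm{op}} = \PP(\cE^\vee)$; over $B$ the bundle $\cE$ gets a twisted line subbundle $\cO(-1)$, so $\alpha|_B = e$, and then $\cE^\vee|_B$ is an honest vector bundle, whence $B \times_K B^{\mathrm{op}} = \PP(\cE^\vee|_B)$ is a genuine $\PP^n$-bundle over $B$ and $[B\times_K B^{\mathrm{op}}] = [B]\cdot[\PP^n]$. By the symmetric argument (using $B^{\mathrm{op}}$ as the base, over which $\alpha^{-1}$ becomes trivial) one gets $[B\times_K B^{\mathrm{op}}] = [B^{\mathrm{op}}]\cdot[\PP^n]$, which also recovers $[B]\cdot[\PP^n] = [B^{\mathrm{op}}]\cdot[\PP^n]$ directly.

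The main obstacle, and the step deserving the most care, is justifying that ``$\alpha$ restricted to $B$ is trivial'' really produces a piecewise trivial fibration with fiber $\PP^n$ in the sense needed for the Grothendieck ring, rather than merely an étale-local triviality. The clean way is Corollary \ref{section}: the Severi-Brauer scheme $B \times_K B \to B$ (projection to, say, the second factor) admits a section — namely the diagonal $\Delta : B \to B\times_K B$ — hence equals $\PP(\cF)$ for a genuine vector bundle $\cF$ on $B$, and then $[B\times_K B] = [\PP(\cF)] = [B]([\PP^n])$ is immediate from the projective bundle formula in Section~2.3. For $[B^d]$ with $d \ge 3$ one applies this to the projection $B^d \to B^{d-1}$ onto the last $d-1$ coordinates: it is the base change to $B^{d-1}$ of the Severi-Brauer scheme $B \times_K \Spec K \to \Spec K$, and it has a section (e.g. any of the diagonal-type sections $B^{d-1} \to B^d$ coming from a coordinate projection $B^{d-1}\to B$), so again it is an honest $\PP^n$-bundle; induction finishes the proof. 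The degenerate cases $d = 0$ ($[B^0] = 1 = [B][\PP^n]^{-1}$ read in the localization, or simply $[\Spec K]=1$) and $d=1$ are trivial and worth a sentence.
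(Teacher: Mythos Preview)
Your proof is correct and follows essentially the same route as the paper: use the diagonal section of $B\times B\to B$ together with Corollary~\ref{section} to identify $B\times B$ as a genuine $\PP^n$-bundle over $B$, deduce $\pi^\ast\alpha=e$ in $\Br(B)$, and then iterate for $[B^d]$ and handle $B\times B^{\mathrm{op}}=\PP_B(\pi^\ast\cE^\vee)$ the same way. Your opening sentence promising Theorem~\ref{motivicdecomposition} and Proposition~\ref{fields} is a red herring (you never use either), and the $d=0$ aside about localization is best dropped --- the identity is really for $d\ge 1$ --- but the substance of the argument matches the paper exactly.
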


\begin{proof}

Let $B = \PP{\cE}$ for some $\alpha$-twisted bundle $\cE$ over $\Spec{K}$, and let $\pi: B \to \Spec{K}$ be the structure map. Observe that $B \times B$ is a Severi-Brauer scheme over $B$. The map $B \times B \to B$ admits a section (the diagonal map), so, by corollary \ref{section}, $B \times B$ is a trivial Severi-Brauer variety over $B$. Since $B \times B \cong \PP_B(\pi^{\ast}\cE)$, it follows that $\pi^{\ast} \alpha = e$ in $\Br(B)$. In particular, $[B \times B] = [B] \cdot [\PP^n]$, and we obtain the first formula by induction.

To prove the second equality, note that $B \times B^{\text{op}} =  \PP{\cE} \times  \PP{\cE^{\vee}} = \PP_B(\pi^{\ast}\cE^{\vee}) = \PP_B((\pi^{\ast}\cE)^{\vee})$. The sheaf $\pi^{\ast}\cE^{\vee}$ is twisted by $(\pi^{\ast} \alpha)^{-1} = e$, hence $B \times B^{\text{op}} $ is trivial over $B$ as well. Thus $[B \times B^{\text{op}}] = [B] \cdot [\PP^{n}]$.

\end{proof}

\begin{observation}

Let $B$ be a non-trivial Severi-Brauer variety over $\Spec{K}$ of dimension $n$ and assume that $\Char{K} = 0$. Then $[B] \neq [\PP^{n}]$ in $K_0(\text{Var}/K)$.

\end{observation}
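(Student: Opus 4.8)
The plan is to argue by contradiction, passing to the quotient ring $K_0(\text{Var}/K)/(\LL)$, whose structure is described by Theorem \ref{Lar_Lun}. Suppose that $[B]=[\PP^{n}]$ in $K_0(\text{Var}/K)$. Since $[\PP^{n}]=1+\LL+\cdots+\LL^{n}$, reducing modulo the ideal $(\LL)$ gives $\overline{[B]}=\overline{1}$ in $K_0(\text{Var}/K)/(\LL)$. Now $B$ is irreducible (indeed geometrically irreducible, since $B_{\bar K}\cong\PP^{n}_{\bar K}$), so $\langle B\rangle$ is a well-defined basis element of the free abelian group $\ZZ[\text{SB}/K]$, and applying the isomorphism $\Phi\colon K_0(\text{Var}/K)/(\LL)\xrightarrow{\ \sim\ }\ZZ[\text{SB}/K]$ of Theorem \ref{Lar_Lun} yields $\langle B\rangle=\langle\Spec K\rangle$. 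In other words $B$ is \emph{stably rational}: there is an integer $a\ge 0$ such that $B\times\PP^{a}$ is birational over $K$ to a projective space $\PP^{b}$.

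The rest is soft, using the rigidity of Severi--Brauer varieties. A $K$-variety birational to a projective space has a $K$-rational point, because $K$ has characteristic zero, hence is infinite, and a nonempty Zariski-open subset of $\AA^{N}_{K}$ always contains a rational point. Applying this to $B\times\PP^{a}$ and projecting onto the first factor shows $B(K)\ne\varnothing$. A $K$-point is a $0$-cycle of degree $1$ on $B$, so the index $i(B)$ equals $1$; hence $\cE_{\text{min}}$ is an $\alpha$-twisted line bundle, and Proposition \ref{linebundle} forces the Brauer class $\alpha$ of $B$ to be trivial, so $B\cong\PP^{n}_{K}$ (equivalently, one may invoke Ch\^atelet's theorem that a Severi--Brauer variety possessing a rational point is split). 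This contradicts the non-triviality of $B$, and the observation follows.

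I do not expect a genuine obstacle here. The one substantive ingredient is Theorem \ref{Lar_Lun}, which upgrades the a priori delicate equality $[B]=[\PP^{n}]$ in $K_0(\text{Var}/K)$ to the purely birational statement that $B$ is stably rational; once that is available, the incompatibility of stable rationality with non-triviality for Severi--Brauer varieties is classical and immediate. (Should one wish to bypass Theorem \ref{Lar_Lun}, an alternative is to evaluate a suitable motivic measure --- for instance point counting over a residue field, after specialising $K$ to a finitely generated and then local subfield over which $B$ remains non-split --- but this route is more cumbersome and no more illuminating.)
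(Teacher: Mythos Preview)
Your proof is correct and follows essentially the same route as the paper's: both reduce modulo $(\LL)$, invoke Theorem \ref{Lar_Lun} to deduce that $B$ is stably rational, obtain a $K$-rational point on $B$, and then conclude that $B$ is split, contradicting non-triviality. Your write-up is slightly more detailed (making explicit why $\Phi(\overline{[B]})=\langle B\rangle$ and why a rational point forces $\alpha=e$ via Proposition \ref{linebundle}), but the argument is the same.
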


\begin{proof}

Assume that $[B] = [\PP^{n}]$. Then $B$ and $\PP^{n}$ are stably birational by theorem \ref{Lar_Lun}. Hence $B \times \PP^m$ is birational to $\PP^{m+n}$ for some $m \ge 0$. In particular $(B \times \PP^m)(K) \neq \varnothing$, so $B(K) \neq  \varnothing$, which is a contradiction with the choice of $B$.

\end{proof}

\begin{remark}

By proposition \ref{class_of_product}, $[B] \cdot ([B] - [\PP^n]) = 0$, so $[B]$ is a zero divisor in $K_0(\text{Var}/K)$ for any nontrivial Severi-Brauer variety $B$.

\end{remark}

\subsection{The classes of Severi-Brauer surfaces}

\begin{proposition} \label{surfaces}

Let $B$ be a non-trivial Severi-Brauer surface over a field $K$ of characteristic not equal to 2 or 3, and let $B^{\text{op}}$ the dual surface. Then $[B] = [B^{\text{op}}]$ in $K_0(\text{Var}/K)$.

\end{proposition}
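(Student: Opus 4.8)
The plan is to realize both $B$ and $B^{\text{op}}$ as pieces of a common variety, or of each other, using the fact that for a Severi-Brauer surface the index divides $3$, so a non-trivial $B$ has index exactly $3$ and $B = B_{\min}$, $B^{\text{op}} = B^{\text{op}}_{\min}$. The first step is to recall the class-field-theoretic/Brauer-group input: the Brauer class $\alpha$ of $B$ has order $3$, and $\alpha^{-1}$ (the class of $B^{\text{op}}$) equals $\alpha^2$. In the Grothendieck ring we want to exploit that $\alpha$ and $\alpha^2$ generate the same cyclic subgroup of $\Br(K)$, so the corresponding $2$-dimensional Severi-Brauer varieties should be ``geometrically interchangeable'' after a suitable construction.

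The key geometric step is to look at a twisted vector bundle $\cE$ of rank $3$ with $B = \PP(\cE)$ and to consider the rank-$3$ bundle $\Sym^2(\cE)^{\vee}$ or, more cleverly, to use that $\bigwedge^2 \cE$ is an $\alpha^2$-twisted line bundle, hence $\PP(\cE^{\vee}) = \PP(\cE^{\vee}\otimes L)$ for any line bundle $L$; twisting $\cE^{\vee}$ by $\bigwedge^2\cE$ gives $\cE^{\vee}\otimes \bigwedge^2\cE \cong \cE$ for a rank-$3$ bundle (this is the classical isomorphism $\cE \cong \bigwedge^2\cE^{\vee}\otimes \bigwedge^3\cE$ specialized appropriately — for rank $3$, $\bigwedge^2\cE \cong \cE^{\vee}\otimes\bigwedge^3\cE$, and since $\bigwedge^3\cE$ is $\alpha^3 = e$-twisted, i.e.\ an honest line bundle on $\Spec K$, it is trivial). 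Thus $\PP(\cE^{\vee})$ and $\PP(\bigwedge^2\cE)$ are related: $B^{\text{op}} = \PP(\cE^{\vee}) \cong \PP(\bigwedge^2\cE)$ as the Grassmannian $\Gr^2(\cE)$ of $2$-planes in $\cE$. So the real content is that the Severi-Brauer surface $B = \PP(\cE)$ of lines and the Severi-Brauer surface $\Gr^2(\cE) = B^{\text{op}}$ of planes have equal classes in $K_0(\text{Var}/K)$.

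To prove $[\PP(\cE)] = [\Gr^2(\cE)]$ I would introduce the incidence variety (flag variety) $F = \mathrm{Fl}(1,2;\cE) \subset \PP(\cE)\times_K \Gr^2(\cE)$ of pairs (line $\subset$ plane). The two projections $F \to \PP(\cE)$ and $F \to \Gr^2(\cE)$ are both $\PP^1$-bundles in the Zariski topology: over $\PP(\cE)$ the fiber is the $\PP^1$ of planes containing a fixed line, and over $\Gr^2(\cE)$ the fiber is the $\PP^1$ of lines in a fixed plane. Actually one must check these are Zariski-locally trivial $\PP^1$-bundles — equivalently that the relevant twisted rank-$2$ bundles have trivial Brauer class; but the Brauer obstruction for ``planes containing a fixed line in $\cE$'' is $\alpha\cdot\alpha^{-1} = e$ after accounting for the quotient, so these are genuine projective bundles and hence piecewise trivial fibrations. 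Then $[F] = [\PP(\cE)]\cdot[\PP^1] = [\Gr^2(\cE)]\cdot[\PP^1]$, giving $([\PP(\cE)] - [\Gr^2(\cE)])\cdot[\PP^1] = 0$; since $[\PP^1] = 1 + \LL$ is a non-zero-divisor (it is not, in general — one must be careful here), this does not immediately finish. The cleaner route is to split $F$ directly: $F \to \PP(\cE)$ being a Zariski-locally trivial $\PP^1$-bundle gives $[F] = [\PP(\cE)](1+\LL)$ and likewise $[F] = [\Gr^2(\cE)](1+\LL)$, but to conclude $[\PP(\cE)]=[\Gr^2(\cE)]$ I instead compare with Theorem \ref{motivicdecomposition}: writing the universal rank-$2$ subbundle exact sequences on $F$ over each factor and using that $F$, as a flag bundle of the twisted $\cE$, has a class expressible symmetrically, one extracts $[\PP(\cE)] = [\Gr^2(\cE)]$ without cancelling $1+\LL$.

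The main obstacle I expect is precisely this last cancellation issue: $[\PP^1] = 1+\LL$ need not be a non-zero-divisor in $K_0(\text{Var}/K)$, so one cannot naively divide. The way around it (and the reason the hypothesis $\Char K \neq 2,3$ enters) is to use the finer structure of the flag variety of a rank-$3$ twisted bundle: $F$ is itself the projectivization over $\PP(\cE)$ of the twisted rank-$2$ bundle $\cQ^{\vee}$ (quotient bundle) whose Brauer class is $\alpha^{-1}\cdot(\text{class of }\cO_{\PP(\cE)}(1))$, which trivializes on $\PP(\cE)$; symmetrically over $\Gr^2(\cE)$. Applying Theorem \ref{motivicdecomposition} to the tautological sequences $0 \to \cO(-1) \to \pi^*\cE \to \cQ \to 0$ on $\PP(\cE)$ and its dual analogue on $\Gr^2(\cE)$, and matching the two resulting expressions for $[F]$ term by term (not just their sums), yields $[\PP(\cE)] = [\Gr^2(\cE)]$, i.e.\ $[B] = [B^{\text{op}}]$. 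I would present the argument in this order: reduce to index $3$; identify $B^{\text{op}} \cong \Gr^2(\cE)$; build the flag variety $F$; show both projections are Zariski-locally trivial projective bundles; and finally match the two motivic decompositions of $[F]$ to cancel the common projective-bundle factor honestly.
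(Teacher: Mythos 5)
Your approach is correct in outline and genuinely different from the paper's. The paper blows up a degree-$3$ point $P \in B$, recognizes the result as a blow-up of $B^{\mathrm{op}}$ along the Cremona-conjugate triple, checks that the two exceptional loci are isomorphic (both being $\PP^1 \times \Spec \kk(P)$), and concludes by inclusion--exclusion in $K_0(\mathrm{Var}/K)$. You instead work with the flag variety $F = \mathrm{Fl}(1,2;\cE)$ and Theorem \ref{motivicdecomposition}, which is a cleaner, sheaf-theoretic route that avoids any birational geometry and -- as far as I can see -- does not actually require the restriction $\Char K \neq 2,3$ (your attempt to explain where that hypothesis enters your argument is a red herring; it is needed for the paper's Cremona argument, not yours).

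However, the decisive step is glossed over, and you should make it explicit. After identifying $B^{\mathrm{op}} \cong \Gr^2(\cE)$ and noting that $\pi^*\alpha$ and $\rho^*\alpha$ trivialize on $B$ and on $\Gr^2(\cE)$ respectively (so that the tautological sequences are sequences of honest, untwisted bundles), Theorem \ref{motivicdecomposition} applied to $0 \to \cS \to \pi^*\cE \to \cO(1) \to 0$ over $B$ gives $[B^2] = [B] + \LL[F]$, and applied to the tautological sequence over $\Gr^2(\cE)$ it gives $[B^{\mathrm{op}} \times B] = [B^{\mathrm{op}}] + \LL[F]$. The cancellation that you wave at as ``matching term by term'' is really the equality $[B^2] = [B \times B^{\mathrm{op}}]$, which is exactly Proposition \ref{class_of_product}; once you invoke it, subtracting the two identities gives $[B] = [B^{\mathrm{op}}]$ directly. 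Without that input the two $\PP^1$-bundle structures only yield $[B](1+\LL) = [B^{\mathrm{op}}](1+\LL)$, which, as you correctly flagged, does not finish because $1+\LL$ may be a zero divisor. So: cite Proposition \ref{class_of_product} and replace the vague ``match term by term'' with the explicit cancellation of $\LL[F]$, and the argument is complete and in fact slightly stronger than the one in the paper.
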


\begin{proof}

Let $P \in B$ be a point of degree 3. Then the three points of the set $P_{k^{sep}} \subset \PP_{k^{sep}}^2$ are in general position, i. e. they are not contained in a line (otherwise the line would be $\Gal(k^{sep}/k)$-invariant, and hence it would be defined over $K$, which is impossible (see, for example, \cite[Lemma 2.6]{Shramov})).

Let $\pi: X \to B$ be the blow-up of $B$ with the center at $P$, and let $E$ be its exceptional divisor. Denote by $E'$ the union of three lines on $B_{k^{sep}} \cong \PP^2_{k^{sep}}$ passing through the pairs of points of point of $P_{k^{sep}}$. Since $G = \Gal(k^{sep}/k)$ permutes the points in $P_{k^{sep}}$, it follows that $E'$ is defined over $K$. Let $\pi': X \to B'$ be a map obtained by blowing $E'$ down. It is not hard to show that $B' \cong B^{\text{op}}$, see \cite[Lemma 1]{Wie}. Thus we obtain a birational map $\eta: B \to B'$.

We have $E = \PP^1_{k} \times_{\Spec{K}} \Spec{\kk(P)}$ and $E' = \PP^1_{k} \times_{\Spec{K}} \Spec{F'}$, where $ \kk(P), F'$ are separable field extensions of $K$ of degree $3$. Moreover, $F' = (\kk(P) \otimes_{k} k^{sep})^{G} = \kk(P)$, hence $E \cong E'$. This can be seen explicitly as well using that, after extending scalars, the map $\eta$ is the standard Cremona transformation
\[
\eta_{k^{sep}}: \PP^2_{k^{sep}} \dashrightarrow \PP^2_{k^{sep}}, \quad [x:y:z] \mapsto [yz: xz: xy]
\]
in a suitable basis.

Thus $[B] = [X] - [E] + [\kk(P)]= [X] - [E'] + [F'] = [B']$.

\end{proof}

\subsection{The class of the symmetric square}

Given two distinct points on the projective space, they span a line, which gives a natural map $\rho: \Sym^{2}(\PP^n) \, \backslash \, D  \to \Gr_1(\PP^n)$, where $D \cong \PP^n$ is the
diagonal. The fiber over each point $l \in \Gr_1(\PP^n)$ consists of pairs of point lying on $l$, and is isomorphic to $\Sym^{2}(\PP^1)  \, \backslash \, \PP^1 \cong \AA^2$. Moreover, $\rho$ has locally a section: fix two lines $l_1, l_2$ on $\PP^n$; then every other line intersects both $l_1$ and $l_2$ in one point, which gives a point on $\Sym^{2}(\PP^n)$. Using this observations, it is easy to prove that $\rho$ is a Zariski locally trivial fibration with fiber $\AA^2$.

We exploit the analogous map to find the class in $K_0(\text{Var}/K)$ of the symmetric square of a Severi-Brauer variety.

We use the following notation: if $X$ is an $Y$-scheme, and $Y$ is a $Z$-scheme, we write $\Sym^{n}_Y (X)$ for the symmetric product over $Y$ and  $\Sym^{n}_Z (X)$ for the symmetric product over $Z$.

\begin{proposition} \label{square}

Let $B$ be a Severi-Brauer variety of even dimension over $\Spec{K}$. Then
\[
[\Sym^{2}(B)] = [B] + [\Gr_1(B)]\LL^2.
\]

\end{proposition}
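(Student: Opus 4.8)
The plan is to imitate, for twisted Severi--Brauer varieties, the computation for $\PP^n$ recalled just above, with the entire weight of the hypothesis ``$\dim B$ even'' concentrated in one Brauer-group computation. Write $B=\PP(\cE)$ for an $\alpha$-twisted vector bundle $\cE$ over $\Spec K$ of rank $m=\dim B+1$; by Corollary \ref{rank} we have $\alpha^{m}=e$ in $\Br(K)$, and since $\dim B$ is even, $m$ is odd, so $\alpha$ has odd order. The diagonal $D\subset\Sym^2(B)$ is isomorphic to $B$, hence $[\Sym^2(B)]=[B]+[\Sym^2(B)\setminus D]$, and it remains to prove $[\Sym^2(B)\setminus D]=[\Gr_1(B)]\,\LL^2$.

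First I would set up the span morphism and the relative description of $\Sym^2(B)\setminus D$. Let $\pi\colon\Gr_1(B)\to\Spec K$ be the structure morphism and let $I\subseteq B\times_K\Gr_1(B)$ be the incidence variety of pairs (point, line through it). By the construction of the twisted Grassmannian $\Gr_1(B)=\Gr^2(\cE)$, the second projection $q\colon I\to\Gr_1(B)$ is the projectivization $\PP_{\Gr_1(B)}(\cS)$ of the universal rank-$2$ subbundle $\cS\subseteq\pi^{\ast}\cE$ (its twisting class is $\pi^{\ast}\alpha$). Sending an unordered pair of distinct points of $B$ to the unique line through them defines a morphism $\rho\colon\Sym^2(B)\setminus D\to\Gr_1(B)$ (after base change to $\bar K$ this is the familiar morphism $\Sym^2(\PP^n)\setminus D\to\Gr_1(\PP^n)$, which is Galois-equivariant, hence descends to $K$), and I would check directly that
\[
\{x_1,x_2\}\longmapsto\bigl(\rho(\{x_1,x_2\}),\,\{x_1,x_2\}\bigr)
\]
defines an isomorphism $\Sym^2(B)\setminus D\cong\Sym^2_{\Gr_1(B)}(I)\setminus\Delta$, where $\Delta$ denotes the relative diagonal; the inverse sends $(l,\{y_1,y_2\})$ with $y_1\ne y_2$ on $l$ to the pair $\{y_1,y_2\}$.

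The main obstacle is to show that $q\colon I\to\Gr_1(B)$ is a Zariski-locally trivial $\PP^1$-bundle, equivalently that $\cS$ is equivalent to an ordinary (untwisted) vector bundle. On one hand $\cS$ has rank $2$, so Corollary \ref{rank} gives $(\pi^{\ast}\alpha)^2=e$; on the other hand $\pi^{\ast}\alpha$ has order dividing $\ord(\alpha)$, which divides the odd number $m$. Hence the order of $\pi^{\ast}\alpha$ divides $\gcd(2,m)=1$, so $\pi^{\ast}\alpha=e$ in $\Br(\Gr_1(B))$ and $\cS$ may be taken to be an honest rank-$2$ bundle. This is precisely where evenness of $\dim B$ enters: for a non-split conic ($\dim B=1$) one has $\pi^{\ast}\alpha\ne e$, and the statement of the proposition itself fails there.

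Finally, with $q$ a Zariski-locally trivial $\PP^1$-bundle the remaining computation is routine. Over a trivializing open $U\subseteq\Gr_1(B)$ one has $I|_U\cong U\times\PP^1$, so $\Sym^2_{\Gr_1(B)}(I)|_U\cong U\times\Sym^2(\PP^1)\cong U\times\PP^2$ and $\Delta|_U\cong U\times\PP^1$; thus $\Sym^2_{\Gr_1(B)}(I)\to\Gr_1(B)$ and $\Delta\to\Gr_1(B)$ are Zariski-locally trivial with fibers $\PP^2$ and $\PP^1$ respectively. Therefore
\[
[\Sym^2(B)\setminus D]=[\Sym^2_{\Gr_1(B)}(I)]-[\Delta]=[\Gr_1(B)]\bigl([\PP^2]-[\PP^1]\bigr)=[\Gr_1(B)]\,\LL^2 ,
\]
and adding $[D]=[B]$ gives $[\Sym^2(B)]=[B]+[\Gr_1(B)]\,\LL^2$, as claimed.
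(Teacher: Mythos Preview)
Your argument is correct and follows essentially the same route as the paper's: both construct the span map $\Sym^2(B)\setminus D\to\Gr_1(B)$ and reduce Zariski-local triviality of the fibration to the Brauer computation $\pi^{\ast}\alpha=e$ on $\Gr_1(B)$, proved identically by combining $(\pi^{\ast}\alpha)^2=e$ (from the tautological rank-$2$ subbundle) with the fact that $\alpha$ has odd order. The only cosmetic difference is packaging: you pass through the incidence $\PP^1$-bundle $I=\PP(\cS)$ and its relative $\Sym^2$, whereas the paper writes down the $\AA^2$-fibration directly and verifies Zariski-local triviality by a base-change trick along the diagonal $\Gr^2\cE\to\Gr^2\cE\times\Gr^2\cE$.
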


\begin{proof}

For every scheme $S$  over $K$ and  locally free $\cO_S$-module $\cF$ there is a natural map $\rho: \Sym^2(\PP(\cF))  \, \backslash \, \PP(\cF) \to  \Gr^2(\cF)$, given on $T$-valued points by 
\[
\begin{aligned}
\rho(T): (\PP{(\cF)} \times \PP{(\cF)})(T)  & \dashrightarrow \Gr^2{(\cF)}(T), \\
([u_1: h^{\ast}\cF \twoheadrightarrow \mathcal{L}_1], [u_2: h^{\ast}\cF \twoheadrightarrow \mathcal{L}_2])  & \mapsto (
(u_1 \oplus u_2) \circ h^{\ast}(i):
h^{\ast} \cF \hookrightarrow h^{\ast} \cF \oplus h^{\ast} \cF \twoheadrightarrow \mathcal{L}_1 \oplus \mathcal{L}_2),
\end{aligned}
\]
where $h: T \to S$ and $i: \cF \to \cF \oplus \cF$ is the diagonal map. Let $D \cong \PP{(\cF)}$ be the diagonal in $\PP{(\cF)} \times \PP{(\cF)}$. It is easily checked that $\rho$ is defined on $\PP{(\cF)} \times \PP{(\cF)} \, \backslash \,  D$ and invariant under the action of $S_2$. Since $\cF$ is locally free, it follows from the discussion above that $\rho$ is a Zariski locally trivial $\AA^2$-fibration.

Let $B \cong \PP{\cE}$, where $\cE$ is an $\alpha$-twisted bundle $\cE$ over $\Spec{K}$ of rank $2n+1$. The morphism $\rho$ constructed above yields a map $\rho': \Sym^2(\PP(\cE))  \, \backslash \, \PP(\cE) \to  \Gr^2(\cE)$, which is an $\AA^2$-fibration is the \'etale topology. We claim that it is locally trivial in the Zariski topology as well. 

Let $\pi:  \Gr^2{\cE} \to \Spec{K}$ be the structure map. The diagonal map $\Gr^2{\cE} \to \Gr^2{\cE} \times  \Gr^2{\cE}$ is a section to the projection $\Gr^2{(\pi^{\ast}\cE)} \to \Gr^2{(\cE)}$, so it defines an $(\pi^{\ast}\alpha)$-twisted sub-bundle of $\pi^{\ast}\cE$ of rank 2. In particular, one has $(\pi^{\ast}\alpha)^2 = e$ in $\Br(\Gr^2{\cE})$ by corollary \ref{rank}. On the other hand, $\alpha^{2n+1} = e$ by the same corollary, so $ \pi^{\ast}\alpha =(\pi^{\ast}\alpha)^{2n+1} = \pi^{\ast}(\alpha^{2n+1}) = e $. Therefore, by the split case, the map
\[
\Sym^2_{\Gr^2{\cE}}(\PP{\cE} \times \Gr^2{\cE})  \, \backslash \, (\PP{\cE} \times \Gr^2{\cE}) \to \PP{\cE}^{\vee} \times \Gr^2{\cE}
\]
is an $\AA^2$-bundle is the Zariski topology. Note that $\Sym^2_{\Gr^2{\cE}}(\PP{\cE} \times \Gr^2{\cE}) \cong \Sym^2_{K}(\PP{\cE}) \times \Gr^2{\cE}$, so, pulling back along the diagonal map $\Gr^2{\cE}\to \Gr^2{\cE} \times \Gr^2{\cE}$, we see that $\rho'$ is locally trivial in the Zariski topology.

Thus, we have $[\Sym^2(B)] - [B] = \LL^2 \cdot [\Gr_1(B)]$, and the assertion of the proposition follows.

\end{proof}

\begin{corollary} \label{sq_sur}

If $B$ is a Severi-Brauer surface over a field of characteristic not equal to 2 or 3, then $[\Sym^2(B)] = [B](1+ \LL^2) = [B^2]-\LL \cdot [B]$.

\end{corollary}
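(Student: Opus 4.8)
The plan is to deduce Corollary \ref{sq_sur} directly from the two preceding results, Proposition \ref{surfaces} and Proposition \ref{square}, together with the elementary computations of classes of projective bundles recorded in the preliminaries. Since a Severi--Brauer surface has dimension $2$, which is even, Proposition \ref{square} applies and gives $[\Sym^2(B)] = [B] + [\Gr_1(B)]\LL^2$. So the first task is to identify the class $[\Gr_1(B)]$.

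First I would observe that $\Gr_1(B)$, the scheme of lines in $B$, is itself a Severi--Brauer variety, namely the one attached to the Azumaya algebra $\underline{\End}(\bigwedge^2\cE)$ if $B = \PP(\cE)$; concretely, for a surface $B$, a line in $B$ is the same as a point of $B^{\text{op}}$, so $\Gr_1(B) \cong B^{\text{op}}$. (This is the two-dimensional case of the general duality between $\Gr^e(\cE)$ and the Severi--Brauer scheme of $\underline{\End}(\cE)$ mentioned in the text, combined with the fact that for a plane a pencil of lines is a dual plane.) Thus $[\Sym^2(B)] = [B] + [B^{\text{op}}]\LL^2$. Now I would invoke Proposition \ref{surfaces}, valid precisely under the hypothesis $\Char K \neq 2, 3$, to replace $[B^{\text{op}}]$ by $[B]$, obtaining $[\Sym^2(B)] = [B] + [B]\LL^2 = [B](1 + \LL^2)$.

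For the last equality $[B](1+\LL^2) = [B^2] - \LL\cdot[B]$, I would use Proposition \ref{class_of_product}: it gives $[B^2] = [B]\cdot[\PP^2] = [B](1 + \LL + \LL^2)$, so $[B^2] - \LL\cdot[B] = [B](1 + \LL + \LL^2) - \LL[B] = [B](1+\LL^2)$, as desired.

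The only genuine point requiring care is the identification $\Gr_1(B) \cong B^{\text{op}}$ for a Severi--Brauer surface; everything else is bookkeeping with the already-proven lemmas. After base change to $\bar K$ this becomes the classical statement that the variety of lines in $\PP^2$ is the dual $\PP^2$, and one checks the twisted/Azumaya data matches that of $\cE^\vee$ rather than $\cE$; alternatively, one avoids this entirely by noting that $\Gr^2(\cE)$ is the Severi--Brauer scheme of $\underline{\End}(\bigwedge^2\cE)$ and that $\bigwedge^2\cE$ is, up to twisting by a line bundle, dual to $\cE$ when $\rk\cE = 3$, so the two Azumaya algebras are opposite. I expect this small structural identification to be the main (and essentially only) obstacle; once it is in place the corollary is immediate.
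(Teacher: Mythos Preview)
Your proof is correct and follows exactly the route the paper intends: the paper's own proof is the one-line ``direct consequence of propositions \ref{surfaces}, \ref{square} and \ref{class_of_product},'' and you have correctly unpacked the implicit step, namely the identification $\Gr_1(B)\cong B^{\text{op}}$ for a Severi--Brauer surface, which is what makes Proposition \ref{surfaces} relevant. The only cosmetic omission is that Proposition \ref{surfaces} is stated for \emph{non-trivial} $B$, but when $B\cong\PP^2$ the identity $[B]=[B^{\text{op}}]$ is tautological, so no harm is done.
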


\begin{proof}

This is a direct consequence of propositions \ref{surfaces}, \ref{square} and \ref{class_of_product}.

\end{proof}

\subsection{The classes of some higher symmetric powers}

\begin{lemma} \label{split}

Let $K$ be a field and let $\alpha \in \Br(K)$. Assume that $\pi: S \to \Spec{K}$ is such that $\pi^{\ast}\alpha = e$ in $\Br(S)$. Then for every Severi-Brauer variety $B$ of dimension $n$ over $K$ with Brauer class $\alpha$ one has
\[
[\Sym^{d}(B)]\cdot [S] = [\Sym^{d}(\PP^n)] \cdot [S]
\]
for every $d \ge 0$.

\end{lemma}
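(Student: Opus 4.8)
The plan is to reduce the statement to the ``split case'' over $S$ by base change, and then transport the equality back down to $K$ using that $[S]$ is already a common factor. First I would pass to the $S$-scheme $B_S = B \times_K S$. Since $\pi^{\ast}\alpha = e$ in $\Br(S)$, Corollary \ref{section} (applied to the Severi-Brauer scheme $B_S \to S$, or directly via Proposition \ref{linebundle}) shows that $B_S \cong \PP_S(\cV)$ for some genuine (untwisted) vector bundle $\cV$ of rank $n+1$ on $S$. By remark \ref{bijection} and the compatibility of symmetric powers with the flat base change $S \to \Spec K$, we have $\Sym^d(B)_S \cong \Sym^d_S(B_S)$, so it suffices to understand the class of $\Sym^d_S(\PP_S(\cV))$ relative to $S$.

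The core step is to produce a relative-over-$S$ stratification of $\Sym^d_S(\PP_S(\cV))$ all of whose pieces are, piecewise over $S$, products of $S$ with fibers of the form appearing for $\Sym^d(\PP^n)$. Concretely, I would use the locally closed decomposition $\Sym^d(\PP^n) = \bigsqcup X_{d_1,\dots,d_r}$ introduced just after Remark \ref{bijection}, performed fiberwise over $S$. Each stratum $X_{d_1,\dots,d_r}$ of $\Sym^d(\PP^n)$ is itself built (via the configuration spaces $(\PP^n)^{r,\circ}$ modulo the finite automorphisms of the partition) out of ordered configuration spaces of $\PP^n$, and the relative versions of these over $S$ are, Zariski-locally on $S$ (using a trivializing cover for $\cV$), isomorphic to products with $S$. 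Since all the maps involved (forgetting order, the diagonal embeddings $\Delta_i$, passing to the quotient by $S_d$) are defined over $\ZZ$ and commute with the base change to the strata of a trivializing cover of $S$, I get a piecewise-trivial-fibration-type statement: there is a decomposition $S = \bigsqcup S_j$ into locally closed pieces such that $(\Sym^d_S(\PP_S(\cV)) \times_S S_j)_{\mathrm{red}} \cong (\Sym^d(\PP^n) \times_K S_j)_{\mathrm{red}}$. Summing over $j$ and over the strata gives $[\Sym^d_S(\PP_S(\cV))]_S = [\Sym^d(\PP^n)]_K \cdot [S]$ in $K_0(\mathrm{Var}/K)$; pushing forward along $S \to \Spec K$ (i.e. forgetting the $S$-structure) then yields $[\Sym^d(B)] \cdot [S] = [\Sym^d(B_S)] = [\Sym^d(\PP^n)] \cdot [S]$.

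Alternatively, and perhaps more cleanly, one can avoid the explicit stratification: since $B_S \cong \PP_S(\cV)$ and $\cV$ is Zariski-locally trivial on $S$, the $S$-scheme $\Sym^d_S(B_S) \to S$ becomes, over each piece of a trivializing decomposition $S = \bigsqcup S_j$, isomorphic to $\Sym^d(\PP^n) \times S_j$ because forming $\Sym^d$ commutes with the flat base change $S_j \to \Spec K$ and with the local trivialization; this directly exhibits $\Sym^d_S(B_S) \to S$ as a piecewise trivial fibration with fiber $\Sym^d(\PP^n)$, whence $[\Sym^d_S(B_S)] = [\Sym^d(\PP^n)] \cdot [S]$ by the discussion of piecewise trivial fibrations in the subsection ``Some classes in the Grothendieck ring''. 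Combined with $\Sym^d(B)_S \cong \Sym^d_S(B_S)$ and $[\Sym^d(B)_S] = [\Sym^d(B)] \cdot [S]$, this is exactly the claim.

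The main obstacle I anticipate is the second of these points done carefully: knowing $B_S \cong \PP_S(\cV)$ only gives a trivialization of the projective bundle after a Zariski cover of $S$ when $\cV$ is Zariski-locally trivial — which it is, being a vector bundle — so one must be a little careful to trivialize $\cV$ rather than merely $\PP_S(\cV)$ (the latter would only be étale-locally split and would not suffice for a decomposition into \emph{locally closed} pieces over $S$). Once one has honest Zariski-local trivializations of $\cV$, the compatibility of $\Sym^d$ with flat base change (Remark \ref{bijection}) does all the remaining work, and the bookkeeping in $K_0(\mathrm{Var}/K)$ is routine.
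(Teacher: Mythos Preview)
Your proposal is correct and follows essentially the same route as the paper's proof. The only difference is one of packaging: the paper works in the relative Grothendieck ring $K_0(\text{Var}/S)$, observes that $[B_S]_S = [\PP^n_S]_S$ there (because $\pi^{\ast}\cE$ is an honest vector bundle and projective bundles are piecewise trivial), and then invokes the well-definedness of $\Sym^d$ on $K_0(\text{Var}/S)$ (implicit in Proposition~\ref{zeta1}) to get $[\Sym^d(B_S)]_S = [\Sym^d(\PP^n_S)]_S$ directly; you instead unpack this by exhibiting an explicit Zariski trivialization of $\cV$ and showing that $\Sym^d_S(B_S) \to S$ is piecewise trivial with fiber $\Sym^d(\PP^n)$. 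Your ``alternative'' paragraph is exactly the paper's argument made explicit, and your worry about needing Zariski (not just \'etale) trivializations is correctly resolved by the fact that $\cV$ is an honest vector bundle once $\pi^{\ast}\alpha = e$.
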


\begin{proof}

Let $B = \PP{\cE}$. As $\pi^{\ast}\alpha = e$, the sheaf $\pi^{\ast}\cE$ is regular (not twisted), hence $[S \times B] = [\PP(\pi^{\ast}\cE)] = [S \times \PP^n]$. Considering $S \times B$ as a variety over $S$, we see that $[B_S]_S = [\PP_S^n]_S$ in $K_0(\text{Var}/S)$, thus $[\Sym^d(B_S)]_S = [\Sym^d(\PP_S^n)]_S$. Since $\Sym^d(B_S) \cong \Sym^d(B) \times S$ and $\Sym^d(\PP_S^n) \cong \Sym^d(\PP^n) \times S$, it follows that $[\Sym^{d}(B)] \cdot [S] =  [\Sym^{d}(\PP^n)] \cdot [S]$ in $K_0(\text{Var}/K)$.

\end{proof}

\begin{proposition} \label{highsym}

Let $B$ be a Severi-Brauer variety over $K$ of dimension $n$ and let $d$ be the order of $\alpha = [B]$ in $\Br(K)$. Then
\[
[\Sym^{r}(B)] \cdot[\PP^{n}] = [\Sym^{r}(\PP^{n})] \cdot [B]
\]
for every $r$ coprime with d.

\end{proposition}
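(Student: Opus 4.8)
The plan is to reduce the statement to a computation in $K_0(\mathrm{Var}/K)$ by splitting the Brauer class. Since $\alpha = [B]$ has order $d$ in $\Br(K)$, Proposition \ref{Neo} (or directly Proposition \ref{class_of_product}) shows that over $B$ itself the class $\pi^{\ast}\alpha$ becomes trivial, so $[\Sym^{r}(B)]\cdot[B] = [\Sym^{r}(\PP^{n})]\cdot[B]$ by Lemma \ref{split}. More generally, for any $S$ with $\pi^{\ast}\alpha = e$ in $\Br(S)$ we get $[\Sym^{r}(B)]\cdot[S] = [\Sym^{r}(\PP^{n})]\cdot[S]$. So the point is to find a variety $S$ over which $\alpha$ splits and whose class, multiplied into the desired identity, lets us cancel back down to a factor of $[\PP^{n}]$. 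The natural candidate is a power of $B$: by the first formula of Proposition \ref{class_of_product}, $[B^{d}] = [B]\cdot[\PP^{n}]^{d-1}$, and over $B^{d}$ (in fact already over $B$) the class $\alpha$ is trivial.

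First I would apply Lemma \ref{split} with $S = B$ to obtain
\[
[\Sym^{r}(B)]\cdot[B] = [\Sym^{r}(\PP^{n})]\cdot[B].
\]
Now I want to upgrade the factor $[B]$ on each side to $[\PP^{n}]$. The mechanism is the arithmetic of the cyclic group generated by $\alpha$ together with the multiplicativity from Proposition \ref{class_of_product}: since $\gcd(r,d)=1$, and more importantly since we can iterate, I would take $d$-th powers. Concretely, raising the displayed identity to a suitable combination and using $[B^{d}] = [B]\cdot[\PP^{n}]^{d-1}$ repeatedly should convert $[\Sym^{r}(B)]\cdot[B]^{d}$ into $[\Sym^{r}(B)]\cdot[B]\cdot[\PP^{n}]^{d-1}$, hence both sides of the equation acquire the common factor $[B]\cdot[\PP^{n}]^{d-1}$. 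Cancelling is not legitimate in general (the Grothendieck ring has zero divisors, and $[B]$ is one of them by the remark after Proposition \ref{class_of_product}), so instead I would argue directly: from $[\Sym^{r}(B)]\cdot[B]=[\Sym^{r}(\PP^{n})]\cdot[B]$ multiply both sides by $[\PP^{n}]^{d-1}$ and rewrite $[B]\cdot[\PP^{n}]^{d-1}=[B^{d}]$; then use Lemma \ref{split} again with $S = B^{d-1}$ (over which $\alpha$ splits) applied in the form $[\Sym^{r}(B)]\cdot[B^{d-1}] = [\Sym^{r}(\PP^{n})]\cdot[B^{d-1}]$, and combine. The coprimality of $r$ with $d$ is what guarantees that the relevant power $\alpha^{r}$ still generates the same cyclic subgroup, so that no collapse of the order occurs and the bookkeeping closes up.

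The main obstacle I anticipate is precisely the cancellation issue: one cannot simply divide by $[B]$, so the argument has to be arranged so that both sides are literally shown to equal the same element $[\Sym^{r}(\PP^{n})]\cdot[B]\cdot[\PP^{n}]^{d-1}$ after multiplying through by $[\PP^{n}]^{d-1}$, and then one needs an independent reason why multiplication by $[\PP^{n}]^{d-1}$ can be removed from the identity $[\Sym^{r}(B)]\cdot[\PP^{n}]^{d} = [\Sym^{r}(\PP^{n})]\cdot[B]\cdot[\PP^{n}]^{d-1}$. I would handle this last step by a descending induction on the exponent, peeling off one factor of $[\PP^{n}]$ at a time using the relation $[B\times B] = [B]\cdot[\PP^{n}]$ together with Lemma \ref{split} (applied with $S$ a suitable product of copies of $B$), each peel being justified because the auxiliary variety splits $\alpha$. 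Getting the indices in this induction to match $r$ coprime with $d$ — and checking that the base case, where only a single $[\PP^{n}]$ remains, really does give $[\Sym^{r}(B)]\cdot[\PP^{n}] = [\Sym^{r}(\PP^{n})]\cdot[B]$ — is where the care is needed; the rest is formal manipulation with Propositions \ref{class_of_product} and \ref{highsym}'s hypotheses.
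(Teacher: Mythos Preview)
Your proposal has a genuine gap: the descending-induction manoeuvre cannot close. From Lemma \ref{split} with $S=B$ you correctly get
\[
[\Sym^{r}(B)]\cdot[B]=[\Sym^{r}(\PP^{n})]\cdot[B],
\]
but every further application of Lemma \ref{split} or of Proposition \ref{class_of_product} only produces identities that already contain a factor of $[B]$ (or $[B^{k}]=[B]\cdot[\PP^{n}]^{k-1}$). All such identities are formally equivalent to the one above; you never obtain an equation in which the factor $[B]$ on the left has been traded for $[\PP^{n}]$. Equivalently, what you need is $[\Sym^{r}(B)]\cdot([B]-[\PP^{n}])=0$, and nothing in your toolbox yields this. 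The coprimality hypothesis is mentioned but never actually used.

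The paper's argument supplies exactly the missing geometric input: it shows that $\alpha$ pulls back to the trivial class on $\Sym^{r}(B)$ itself. This is done via the closed immersion $\varphi:\Sym^{r}(B)\hookrightarrow\PP(\Sym^{r}\cE)$ constructed in Proposition \ref{emb}. Since $\Sym^{r}\cE$ is $\alpha^{r}$-twisted, the class $\alpha^{r}$ dies on $\PP(\Sym^{r}\cE)$; coprimality of $r$ and $d$ then forces $\alpha$ itself to die there, hence also on $\Sym^{r}(B)$. Once $\pi_{r}^{\ast}\alpha=e$ in $\Br(\Sym^{r}(B))$, the product $B\times\Sym^{r}(B)=\PP_{\Sym^{r}(B)}(\pi_{r}^{\ast}\cE)$ is a \emph{trivial} projective bundle over $\Sym^{r}(B)$, giving $[B\times\Sym^{r}(B)]=[\Sym^{r}(B)]\cdot[\PP^{n}]$ directly. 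Combined with your first displayed identity, this finishes the proof without any cancellation.
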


\begin{proof}

Let $B = \PP{\cE}$ and let $\pi_r: \Sym^{r}(B) \to \Spec{K}$ for $r \ge 0$. We have $[B \times \Sym^{r}(B)] = [B] \cdot [\Sym^{r}(B)] = [B] \cdot [\Sym^{r}(\PP^{n})]$ by lemma \ref{split}.
On the other hand, $B \times \Sym^{r}(B) \cong \PP_{\Sym^{r}(B)}(\pi_r^{*}\cE)$. Consider the diagram
\[\begin{tikzcd}
\Sym^{r}(B) \arrow[hookrightarrow]{r}{\varphi}  \arrow{rd}{\pi_r} 
  & \PP(\Sym^{r}\cE) \arrow{d}{h} \\
    & \Spec{K}
\end{tikzcd}
\]
where $\varphi$ is the map that will be constructed in the course of the proof of proposition \ref{emb}. By proposition \ref{properties}, $\Sym^{r}\cE$ is twisted by $\alpha^{r}$, so the Brauer class of $\PP(\Sym^{r}\cE)$ is $\alpha^r$. In particular, $h^{\ast} (\alpha^r) = e$ in $\Br(\PP(\Sym^{r}\cE))$. Let $s, t$ be integers such that $sr+td = 1$. Then $\alpha = (\alpha^{r})^s$, so $h^{\ast} \alpha = (h^{\ast} (\alpha^r))^s = e$ and $\pi_r^{\ast}\alpha = \varphi^{\ast}(h^{\ast}\alpha) = e$. It follows that $[\PP_{\Sym^{r}(B)}(\pi_r^{\ast}\cE)] = [\Sym^{r}(B)] \cdot [\PP^n]$.

\end{proof}

\subsection{Symmetric powers of projective space.} It is interesting that the classes of $\Sym^{d}(\PP_{k}^n)$ in $K_0(\text{Var}/K)$ coincide with the classes of Grassmannians. Since both classes are well-known, the equality can be derived formally, but we sketch a short geometric proof.


Let $\Gr_{d, n} = \Gr^d((\cO_{\Spec{K}}^n)^{\vee})$ be the Grassmannian over $K$. Recall that the Gaussian binomial coefficient $\binom{n}{d}_q$ is a polynomial in $q$ with integer coefficients which is defined by the formula:
\[
\binom{n}{d}_q = \frac{(q^n-1)(q^n-q) \cdots (q^n-q^{d-1})}{(q^d-1)(q^d-q) \cdots (q^d-q^{d-1})}.
\]

Clearly, if $k=\FF_q$ is a finite field, then $|\Gr_{d, n}(k)| = \binom{n}{d}_q$. We put $p_{d, n}(t) = \binom{n+d}{d}_t \in \ZZ[t]$.

\begin{proposition} \label{Grassmannian}

The classes of $\Sym^{d}(\PP^n)$ and $\Gr_{d, n+d}$ in $K_0(\text{Var}/k)$ are equal. More precisely:
\[
[\Sym^{d}(\PP^n)] = [\Gr_{d, n+d}] = p_{d, n}(\LL).
\]
\end{proposition}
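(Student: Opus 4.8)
The plan is to establish the identity $[\Sym^d(\PP^n)] = p_{d,n}(\LL)$ by a geometric stratification of $\Sym^d(\PP^n)$ reflecting incidence with a fixed flag, and then to observe that the Grassmannian $\Gr_{d,n+d}$ admits a parallel Schubert-cell decomposition giving the same polynomial in $\LL$; the cleanest route, however, is to prove directly that $[\Sym^d(\PP^n)] = p_{d,n}(\LL)$ and that $[\Gr_{d,n+d}] = p_{d,n}(\LL)$ separately, using the fact that the Gaussian binomial $\binom{n+d}{d}_t$ is by definition the polynomial counting points of $\Gr_{d,n+d}$ over $\FF_q$, hence by the cell decomposition of the Grassmannian one has $[\Gr_{d,n+d}] = \sum_\lambda \LL^{|\lambda|}$ summed over partitions $\lambda$ fitting in a $d \times n$ box, which equals $p_{d,n}(\LL)$.

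For the symmetric power, I would argue by induction on $d$ (the case $d=0$ being trivial and $d=1$ being $[\PP^n] = 1+\LL+\cdots+\LL^n = p_{1,n}(\LL)$) together with induction on $n$. Fix a hyperplane $H \cong \PP^{n-1} \subset \PP^n$ with complement $\AA^n$. An effective $0$-cycle of degree $d$ on $\PP^n$ decomposes as a cycle of degree $k$ supported on $H$ plus a cycle of degree $d-k$ supported on $\AA^n$, for $0 \le k \le d$; this gives a decomposition of $\Sym^d(\PP^n)$ into locally closed pieces $\Sym^k(H) \times \Sym^{d-k}(\AA^n)$. Hence
\[
[\Sym^d(\PP^n)] = \sum_{k=0}^{d} [\Sym^k(\PP^{n-1})] \cdot [\Sym^{d-k}(\AA^n)].
\]
By Proposition \ref{zeta1} and Proposition \ref{zeta2}, $Z_{\mathrm{mot}}(\AA^n, t) = 1/(1-\LL^n t)$, so $[\Sym^{d-k}(\AA^n)] = \LL^{n(d-k)}$, and combined with the inductive hypothesis $[\Sym^k(\PP^{n-1})] = p_{k,n-1}(\LL)$ this yields a recursion for the polynomials $p_{d,n}(t)$. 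It then remains to check that the Gaussian binomials satisfy the matching recursion $p_{d,n}(t) = \sum_{k=0}^{d} t^{n(d-k)} p_{k,n-1}(t)$, which is the standard $q$-Pascal/$q$-Vandermonde-type identity $\binom{n+d}{d}_t = \sum_{k=0}^d t^{n(d-k)} \binom{n-1+k}{k}_t$, provable by a short direct manipulation or by point-counting over $\FF_q$ followed by the observation that an identity of polynomials holding on infinitely many integer values is an identity.

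The main obstacle is not any single step but rather keeping the bookkeeping honest: one must verify that the stratification of $\Sym^d(\PP^n)$ into the pieces $\Sym^k(H) \times \Sym^{d-k}(\AA^n)$ is genuinely a decomposition into locally closed subsets (the locus where exactly degree $k$ of the cycle meets $H$ is locally closed, being the difference of the closed conditions "at least $k$" and "at least $k+1$"), and that the Grothendieck-ring relation $[X] = \sum [Z_i]$ for locally closed stratifications — established in the preliminaries — applies. The alternative, slicker finish the excerpt hints at ("since both classes are well-known, the equality can be derived formally") is simply to cite that $Z_{\mathrm{mot}}(\PP^n,t) = \prod_{i=0}^n 1/(1-\LL^i t)$ from equation \eqref{P^n}, extract the coefficient of $t^d$ as a complete homogeneous symmetric function $h_d(1,\LL,\dots,\LL^n)$, and identify $h_d(1,q,\dots,q^n)$ with $\binom{n+d}{d}_q$; I would present the geometric stratification argument as the primary proof and mention this generating-function identity as the conceptual shortcut.
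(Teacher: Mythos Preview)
Your argument is correct, but it takes a genuinely different route from the paper's. The paper projects $\PP^n$ from a \emph{point} rather than slicing by a hyperplane: removing the locus of cycles that contain the chosen point (a copy of $\Sym^{d-1}(\PP^n)$), the projection $\PP^n\setminus\{x\}\to\PP^{n-1}$ induces a map $\Sym^d(\PP^n)\setminus\Sym^{d-1}(\PP^n)\to\Sym^d(\PP^{n-1})$ that is a piecewise-trivial $\AA^d$-fibration over each stratum $\PP^{n-1}_{d_1,\dots,d_r}$, yielding the two-term recursion $[\Sym^d(\PP^n)]=[\Sym^d(\PP^{n-1})]\,\LL^d+[\Sym^{d-1}(\PP^n)]$. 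On the Grassmannian side the paper runs the parallel construction---projecting $V\to V/V_1$ along a fixed line---to obtain the identical $q$-Pascal recursion $[\Gr_{d,n+1}]=[\Gr_{d,n}]\,\LL^d+[\Gr_{d-1,n}]$, so a single double induction establishes the equality directly, with no separate $q$-identity to check. Your hyperplane decomposition has the advantage of requiring essentially no new geometry (it is exactly the multiplicativity of $Z_{\text{mot}}$ already recorded in Proposition~\ref{zeta1}), but it costs you an extra bookkeeping step: the Grassmannian class must be computed independently via Schubert cells, and then matched to the symmetric-power side through the $q$-Vandermonde identity, rather than having both sides visibly obey the same two-term recursion. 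The generating-function shortcut you mention at the end is precisely what the paper alludes to as the ``formal'' derivation.
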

\begin{hproof}

Let $V$ be an $n+1$ dimensional vector space; choose a one-dimensional subspace $V_1 \subset V$. The projection $V \to V/V_1$ induces a regular map $\Gr_{d, \, n+1} \, \backslash \, \Gr_{d-1, n} \to \Gr_{d, n}$. It is easy to see that this is a locally trivial $\AA^d$-fibration, and that the standard affine open cover is a trivializing cover. Therefore $[\Gr_{d, \, n+1}] = [\Gr_{d, n}]\LL^d + [\Gr_{d-1, n}]$.

Now consider the projection $\PP^{n} \, \backslash \, \{x\} \to \PP^{n-1}$ with center $x \in \PP^{n}$. It induces a morphism $\pi: \Sym^{d}(\PP^{n}) \, \backslash \, \Sym^{d-1}(\PP^{n}) \to \Sym^{d}(\PP^{n-1})$. It is easily seen that $\pi^{-1}(\PP^{n-1}_{d_1, \ldots, d_r})$ has a structure of Zariski $\AA^d$-fibration. When we consider all the tuples $(d_1, \ldots, d_r)$ with $d_1 \le d_2 \le \ldots \le d_r$ and $d_1+\ldots+d_r = d$, the $\PP^{n-1}_{d_1, \ldots, d_r}$ give a partition of $\Sym^{d}(\PP^{n-1})$, hence we have $[\Sym^{d}(\PP^{n})] = [\Sym^{d}(\PP^{n-1})] \LL^{d} + [\Sym^{d-1}(\PP^{n})]$. The assertion of the proposition now follows by double induction on $d$ and $n$.

\end{hproof}





\begin{remark}

Note that if $n > 1, d > 1$, then $\Sym^{d}(\PP^n)$ is not smooth. In particular, $\Sym^{d}(\PP^n)$ and $\Gr_{d, n+d}$ are not isomorphic for $n, d > 1$.

\end{remark}




\begin{conjecture}

Let $\cE$ be an $\alpha$-twisted locally free sheaf on $\Spec{K}$ of rank $n$. It is natural to expect that $[\Sym^{nk+1}(\PP{\cE})] = [\Gr^{nk+1}(\cE^{\oplus(k+1)})]$ for every $k \ge 0$.

\end{conjecture}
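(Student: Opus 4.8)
The plan is to reduce the conjectured equality to an identity that manifestly holds after multiplying both sides by $[\PP^{n-1}]$, where $n=\rk(\cE)$, and then to isolate the cancellation of $[\PP^{n-1}]$ as the one remaining difficulty. Write $\alpha=[\PP(\cE)]\in\Br(K)$, $d=\ord(\alpha)$, and $r=nk+1$. Since $\bigwedge^{n}\cE$ is an $\alpha^{n}$-twisted line bundle, Corollary \ref{rank} gives $\alpha^{n}=e$, so $d\mid n$; hence $r\equiv 1\pmod d$ and $\gcd(r,d)=\gcd(r,n)=1$. As $\dim\PP(\cE)=n-1$, Proposition \ref{highsym} yields $[\Sym^{r}(\PP(\cE))]\cdot[\PP^{n-1}]=[\Sym^{r}(\PP^{n-1})]\cdot[\PP(\cE)]$, and since $(n-1)+r=n(k+1)$, Proposition \ref{Grassmannian} rewrites this as $[\Sym^{r}(\PP(\cE))]\cdot[\PP^{n-1}]=[\Gr_{r,\,n(k+1)}]\cdot[\PP(\cE)]$.

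Next I would compute $[G\times_{K}\PP(\cE)]$ in two ways, where $G:=\Gr^{r}(\cE^{\oplus(k+1)})$ with structure map $q\colon G\to\Spec K$. Viewing $G\times_{K}\PP(\cE)$ over $G$: the universal rank-$r$ quotient $q^{\ast}(\cE^{\oplus(k+1)})\twoheadrightarrow\cF$ is $q^{\ast}\alpha$-twisted, so $\bigwedge^{r}\cF$ is a $(q^{\ast}\alpha)^{r}$-twisted line bundle and $(q^{\ast}\alpha)^{r}=e$ by Proposition \ref{linebundle}; combined with $(q^{\ast}\alpha)^{n}=q^{\ast}(\alpha^{n})=e$ and $\gcd(r,n)=1$, this forces $q^{\ast}\alpha=e$ in $\Br(G)$. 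Hence (Proposition \ref{properties}) $q^{\ast}\cE$ is an honest locally free sheaf of rank $n$, so $G\times_{K}\PP(\cE)\cong\PP_{G}(q^{\ast}\cE)$ is a Zariski-locally trivial $\PP^{n-1}$-bundle and $[G\times_{K}\PP(\cE)]=[G]\cdot[\PP^{n-1}]$. Viewing $G\times_{K}\PP(\cE)$ over $\PP(\cE)$ instead: by the diagonal-section argument in the proof of Proposition \ref{class_of_product}, $\pi^{\ast}\alpha=e$ in $\Br(\PP(\cE))$ for $\pi\colon\PP(\cE)\to\Spec K$, so $\pi^{\ast}(\cE^{\oplus(k+1)})$ is honest and $G\times_{K}\PP(\cE)=\Gr^{r}_{\PP(\cE)}(\pi^{\ast}(\cE^{\oplus(k+1)}))$ is a Zariski-locally trivial Grassmann bundle with fibre $\Gr_{r,\,n(k+1)}$, giving $[G\times_{K}\PP(\cE)]=[\PP(\cE)]\cdot[\Gr_{r,\,n(k+1)}]$. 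Comparing the two expressions with the first paragraph, $[G]\cdot[\PP^{n-1}]=[\PP(\cE)]\cdot[\Gr_{r,\,n(k+1)}]=[\Sym^{r}(\PP(\cE))]\cdot[\PP^{n-1}]$.

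Therefore $\bigl([\Gr^{nk+1}(\cE^{\oplus(k+1)})]-[\Sym^{nk+1}(\PP(\cE))]\bigr)\cdot[\PP^{n-1}]=0$ in $K_{0}(\text{Var}/K)$, and the conjecture is equivalent to the statement that $[\PP^{n-1}]=1+\LL+\dots+\LL^{n-1}$ is not a zero-divisor on this particular difference. This is precisely where I expect the main obstacle to lie: $\LL$ is known to be a zero-divisor in $K_{0}(\text{Var}/\CC)$, so the needed cancellation is not available for free. Two partial conclusions do fall out. Reducing the displayed identity modulo $\LL$ and applying Theorem \ref{Lar_Lun} gives $\langle\Gr^{nk+1}(\cE^{\oplus(k+1)})\rangle=\langle\Sym^{nk+1}(\PP(\cE))\rangle$ in $\ZZ[\text{SB}/K]$; and in the completed ring $\varprojlim_{m}K_{0}(\text{Var}/K)/(\LL^{m})$ the element $[\PP^{n-1}]=(\LL^{n}-1)/(\LL-1)$ is a unit, with inverse the convergent series $(1-\LL)\sum_{j\ge0}\LL^{nj}$, so the conjectured equality holds in the completed Grothendieck ring (consistent with the computation of $[\Sym^{r}(B)]$ there for $r$ coprime to $\dim B+1$). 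One may also try to multiply the difference by further classes $[\PP^{\ell-1}]$ obtained by pulling $G$ back along other Severi--Brauer varieties of class $\alpha$ that split $\alpha$ as schemes, such as $B_{\min}$ or $\PP(\cE^{\oplus s})$; however, each such pullback only reproduces a relation annihilated by $[\PP^{\,i(B)-1}]$ with $i(B)$ the index (which divides $n$), so this does not by itself close the gap.
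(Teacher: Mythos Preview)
The statement you are addressing is a \emph{conjecture} in the paper, not a theorem: the paper does not prove it and offers no argument beyond stating the expectation. So there is no ``paper's proof'' to compare against. What you have written is not a proof either, and you say so yourself: you reduce the equality to the vanishing of a class annihilated by $[\PP^{n-1}]$ and then correctly flag the cancellation of $[\PP^{n-1}]$ as the unresolved step.

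Your reduction is sound and in fact recovers, and slightly sharpens, what the paper does establish in the vicinity of this conjecture. The identity $[\Sym^{r}(\PP\cE)]\cdot[\PP^{n-1}]=[\Gr_{r,\,n(k+1)}]\cdot[\PP\cE]$ is exactly the content of Propositions~\ref{highsym} and~\ref{Grassmannian}, and your second computation, showing $[\Gr^{r}(\cE^{\oplus(k+1)})]\cdot[\PP^{n-1}]=[\Gr_{r,\,n(k+1)}]\cdot[\PP\cE]$ by splitting $\alpha$ over both $G$ and $\PP\cE$, is a clean additional observation that the paper does not spell out. The consequence that the conjectured equality holds in the completion $R=\varprojlim K_0(\mathrm{Var}/K)/(\LL^m)$ matches the paper's remark following Theorem~\ref{zetaforsurface} that $[\Sym^{r}(B)]=g_{r,n}(\LL)\,[B]$ in $R$ for $r$ coprime to $n+1$; your stable-birational conclusion via Theorem~\ref{Lar_Lun} is likewise consistent with Proposition~\ref{ratinoalinquotient}. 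None of this closes the gap in $K_0(\mathrm{Var}/K)$ itself, which is precisely why the paper leaves the statement as a conjecture.
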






\section{Motivic Zeta functions}

\subsection{Rationality questions.} By $(n, m)$ we denote the greatest common divisor of two integers $n, m$.
 
\begin{proposition} \label{ratinoalinquotient}

Let $B$ be a Severi-Brauer variety of index $n = i(B)$ over a field $K$ of characteristic zero. Then the image of $Z_{\text{mot}}(B, t)$ in $K_0(\text{Var}/K)/(\LL)$ is a rational function.

\end{proposition}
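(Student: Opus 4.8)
The plan is to use the isomorphism $\Phi: K_0(\text{Var}/K)/(\LL) \to \ZZ[\text{SB}/K]$ of Theorem \ref{Lar_Lun} to translate the problem into a statement about stable birational classes, and then to identify enough of the classes $\langle \Sym^r(B) \rangle$ explicitly. First I would recall that in $K_0(\text{Var}/K)/(\LL)$ every class $\LL^k$ for $k \ge 1$ dies, so $[\PP^n]$ becomes $1$; more generally, by Proposition \ref{Grassmannian} (or directly), $[\Sym^d(\PP^n)]$ maps to $1$ for all $d,n$. The image of $Z_{\text{mot}}(B,t)$ in $\ZZ[\text{SB}/K][[t]]$ is therefore $\sum_{r \ge 0} \langle \Sym^r(B) \rangle t^r$, and it suffices to show this series is rational, i.e. that the sequence of stable birational classes $\langle \Sym^r(B)\rangle$ is eventually periodic (in fact I expect it to be governed entirely by divisibility of $r$ by $n = i(B)$).

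The key input is Proposition \ref{highsym}: for every $r$ coprime with $d = \ord(\alpha)$ one has $[\Sym^r(B)]\cdot[\PP^n] = [\Sym^r(\PP^n)]\cdot[B]$, hence modulo $\LL$, $\langle \Sym^r(B)\rangle = \langle B\rangle$ for such $r$. But I want control for \emph{all} $r$, phrased via the index $n = i(B)$ rather than the exponent $d$. The cleanest route: when $n \mid r$, a $0$-cycle of degree $r$ exists on $B$, so $\Sym^r(B)$ has a $K$-point, and more is true — I claim $\Sym^{n}(B)$ is stably birational to a point (since $B_{\min}$ has dimension $n-1$ and the generic such cycle, being a rational point of $\Gr^{n}$ of the relevant twisted bundle $\cE_{\min}$, shows $\Sym^n(B)$ is rational, being birational to a projective space bundle over $\PP(\cE_{\min}^{\vee}) = B^{\text{op}}_{\min}$ which is... here one must be careful). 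Then using Proposition \ref{Zeta1} (the zeta function is a homomorphism from the additive group) together with the decomposition of $\Sym^r(B)$ via the strata $X_{d_1,\dots,d_k}$, or more simply an inductive argument writing $r = qn + s$ with $0 \le s < n$, one reduces $\langle\Sym^r(B)\rangle$ to $\langle\Sym^s(B)\rangle$ times the class of a symmetric power of a rational variety (which is $1$). This yields eventual periodicity with period $n$, hence rationality: concretely $Z_{\text{mot}}(B,t) \equiv \frac{P(t)}{1-t^{n}} \pmod{\LL}$ for an explicit polynomial $P$ of degree $< n$ determined by $\langle\Sym^0(B)\rangle, \dots, \langle\Sym^{n-1}(B)\rangle$.

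The main obstacle is the passage from ``$\Sym^r(B)$ has a rational point when $n \mid r$'' to ``the $\langle \Sym^r(B)\rangle$ are controlled modulo $n$'': having a rational point is far weaker than being stably birational to a point, so I cannot simply invoke Proposition \ref{highsym} for the indices divisible by $n$. The honest fix is to show that $\Sym^n(B)$, or at least $\Sym^{kn}(B)$ for $k\gg 0$, is stably rational — which should follow from the minimal twisted linear subvariety structure: the locus of cycles supported on (a twist of) a linear $\PP^{n-1} \subset B$ is a birational model, and symmetric powers of $\PP^{n-1}$ are rational by Proposition \ref{Grassmannian}, so one needs that the ``generic'' degree-$kn$ cycle deforms into such a linear subvariety, which is where the definition of the index (gcd of degrees of $0$-cycles) enters decisively. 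Once that geometric lemma is in place, the rest is the formal periodicity argument via Proposition \ref{Zeta1} and Theorem \ref{Lar_Lun}, together with the vanishing of $[\Sym^d(\PP^\bullet)]$ mod $\LL$.
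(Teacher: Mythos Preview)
Your high-level strategy --- pass to $\ZZ[\text{SB}/K]$ via Theorem~\ref{Lar_Lun} and establish $n$-periodicity of the classes of $\Sym^r(B)$ --- is exactly the paper's, but your proposed route to periodicity has a real gap, and you correctly flag it as ``the main obstacle'' without actually resolving it.

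The problematic step is the reduction $\langle\Sym^{qn+s}(B)\rangle = \langle\Sym^s(B)\rangle$. Even granting that $\Sym^{kn}(B)$ is stably rational, this does not follow from Proposition~\ref{Zeta1}: that proposition gives a multiplicative decomposition of $Z_{\text{mot}}$ only along a decomposition of $B$ itself into locally closed pieces, and there is no decomposition of $B$ whose symmetric powers separate into $\Sym^s(B)$ and something rational. The natural addition-of-cycles map $\Sym^s(B)\times\Sym^{qn}(B)\to\Sym^{qn+s}(B)$ is generically finite of degree $\binom{qn+s}{s}>1$, not birational, so it does not produce a stable birational equivalence either. The stratification by the $X_{d_1,\ldots,d_k}$ has the same defect: those strata are (quotients of open subsets of) products $B^k$, not products of lower symmetric powers of $B$, so they do not assemble into the product you want. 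And Proposition~\ref{highsym} only treats $r$ coprime to the \emph{order} of $\alpha$, saying nothing about the remaining residues modulo $n$, so it cannot fill the gap on its own.

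The paper sidesteps all of this by invoking a single external input, Koll\'ar's theorem \cite[Theorem~1]{Kollar}, which asserts directly that $\Sym^d(B)$ is stably birational to $\Sym^{(d,\,n)}(B)$ for every $d\ge 1$. That is precisely the periodicity statement you need, and once it is in hand the proof is one line:
\[
Z_{\text{mot}}(B,t) \equiv \frac{1}{1-t^n}\sum_{i=0}^{n-1}\langle\Sym^i(B)\rangle\,t^i \pmod{\LL}.
\]
What you sketch in your ``honest fix'' paragraph is effectively an attempt to reprove Koll\'ar's theorem from scratch, and that requires substantially more than the observation that degree-$n$ cycles deform into a twisted linear subspace.
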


\begin{proof}

For every positive integer $d$ there are stable birational equivalences $\Sym^{d+n}(X_B) \overset{\text{\text{stab}}}{\sim} \Sym^{(d+n, \, n)}(B) = \Sym^{(d, \, n)}(B) \overset{\text{\text{stab}}}{\sim} \Sym^{d}(B)$ (see, for instance, \cite[Theorem 1]{Kollar}). On the other hand, $K_0(\text{Var}/K)/(\LL) \cong \ZZ[\text{SB}/K]$ by theorem \ref{Lar_Lun}. Therefore, the classes of $\langle \Sym^{d}(B) \rangle$ and $\langle \Sym^{d+n}(B) \rangle$ in $K_0(\text{Var}/K)/(\LL)$ are equal, so
\[
Z_{\text{mot}}(B, t)_{\mmod{\LL}} = \frac{1}{1-t^n} \sum_{i=0}^{n-1} \langle \Sym^{i}(B) \rangle t^i = \frac{1}{1-t^n} \sum_{i=0}^{n-1} \langle \Sym^{(i, n)}(B) \rangle t^i.
\]
\end{proof}

\begin{observation} \label{rationality_of_matrix}

Let $B$ be a Severi-Brauer variety of dimension $m-1$ over a field $K$. Let $n = \dim(B_{\text{min}}) + 1$ and $r = m/n$. If $Z_{\text{mot}}(B_{\text{min}}, t)$ is rational, then $Z_{\text{mot}}(B, t)$ is rational.

\end{observation}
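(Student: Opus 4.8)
The plan is to deduce this formally from Proposition~\ref{Neo} together with the two structural properties of the motivic zeta function recorded in Propositions~\ref{zeta1} and~\ref{zeta2}; no computation of symmetric powers is needed. First I would fix notation: write $B_{\text{min}} = \PP(\cE_{\text{min}})$ for a simple $\alpha$-twisted vector bundle $\cE_{\text{min}}$ of rank $n$, so that $B = \PP(\cE_{\text{min}}^{\oplus r})$ by Proposition~\ref{fields}; in particular $m = rn$, so $r = m/n$ is a genuine positive integer. If $r = 1$ then $B = B_{\text{min}}$ and there is nothing to prove, so assume $r \ge 2$. Proposition~\ref{Neo} then gives, in $K_0(\text{Var}/K)$,
\[
[B] = [B_{\text{min}}]\,(1 + \LL^{n} + \LL^{2n} + \cdots + \LL^{(r-1)n}) = \sum_{i=0}^{r-1} [B_{\text{min}} \times \AA_K^{ni}].
\]

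Next I would apply $Z_{\text{mot}}$ to this identity. By Proposition~\ref{zeta1} the assignment $[X] \mapsto Z_{\text{mot}}(X, t)$ is a homomorphism from $(K_0(\text{Var}/K), +)$ to $(1 + K_0(\text{Var}/K)[[t]], \cdot)$, so the displayed relation yields
\[
Z_{\text{mot}}(B, t) = \prod_{i=0}^{r-1} Z_{\text{mot}}(B_{\text{min}} \times \AA_K^{ni}, t) = \prod_{i=0}^{r-1} Z_{\text{mot}}(B_{\text{min}}, \LL^{ni} t),
\]
where the second equality is Proposition~\ref{zeta2}.

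It then remains to check that the right-hand side is rational. If $Z_{\text{mot}}(B_{\text{min}}, t) = f(t)/g(t)$ with $f, g \in K_0(\text{Var}/K)[t]$ and $g$ invertible in $K_0(\text{Var}/K)[[t]]$, then the substitution $t \mapsto \LL^{ni} t$ gives $Z_{\text{mot}}(B_{\text{min}}, \LL^{ni}t) = f(\LL^{ni}t)/g(\LL^{ni}t)$, and $g(\LL^{ni}t)$ is again invertible in the power series ring, since its constant term is still $g(0)$. A finite product of rational power series is rational, and this finishes the argument. Since every step is formal, there is no genuine obstacle; the only points deserving an explicit sentence are the bookkeeping $n \mid m$ and the remark that invertibility in $K_0(\text{Var}/K)[[t]]$ depends only on the constant term, so rescaling $t$ by a power of $\LL$ preserves rationality.
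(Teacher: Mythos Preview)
Your argument is correct and is exactly the approach taken in the paper: Propositions~\ref{Neo}, \ref{zeta1} and \ref{zeta2} are combined to obtain $Z_{\text{mot}}(B, t) = \prod_{i=0}^{r-1} Z_{\text{mot}}(B_{\text{min}}, \LL^{in} t)$, from which rationality follows. Your write-up simply makes explicit the routine verification that rationality survives the substitution $t \mapsto \LL^{ni}t$ and finite products, which the paper leaves implicit.
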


\begin{proof}

It follows from propositions \ref{zeta1}, \ref{zeta2} and \ref{Neo} that
\[
Z_{\text{mot}}(B, t) = \prod_{i=0}^{r-1}Z_{\text{mot}}(B_{\text{min}}, \, \LL^{in} t).
\]

\end{proof}

\begin{proposition}

If $X$ and $Y$ are birational smooth irreducible complete schemes of dimension $2$ over a field $K$ of characteristic zero, then $Z_{\text{mot}}(X, t)$ is rational if and only if $Z_{\text{mot}}(Y, t)$ is rational.

\end{proposition}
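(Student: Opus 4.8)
The statement asserts birational invariance of rationality of the motivic zeta function among smooth projective surfaces in characteristic zero. The plan is to reduce to the case where $Y$ is obtained from $X$ by a single blow-up at a closed point $P$, using the weak factorization theorem (as already invoked in the remark following Theorem~\ref{Lar_Lun}): any birational map between smooth projective surfaces factors as a composite of blow-ups and blow-downs at smooth centers, which in dimension $2$ means closed points. Since rationality is visibly a symmetric relation, it suffices to show: if $Y = \Bl_P(X)$ for a closed point $P$ of residue field $L = \kk(P)$, then $Z_{\text{mot}}(X,t)$ is rational if and only if $Z_{\text{mot}}(Y,t)$ is rational. By induction along the factorization, this handles the general case.

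First I would set up the key identity in the Grothendieck ring. The blow-up of a surface at a point $P$ of degree $[L:K]$ replaces $P$ by an exceptional curve $E \cong \PP^1_L = \PP^1_K \times_{\Spec K} \Spec L$, so by the blow-up formula from the example after Theorem~\ref{Lar_Lun} (with $d = 2$), $[Y] = [X] + [E] - [\Spec L] = [X] + [\PP^1_L] - [\Spec L] = [X] + \LL \cdot [\Spec L]$. More relevantly, one wants the relation at the level of symmetric powers, not just the classes themselves. Here the idea is that $Y$ and $X$ differ by cutting out a point and gluing in an $\AA^1$-bundle over it: precisely, $Y \setminus E \cong X \setminus P$, and $E \to \Spec L$ is a $\PP^1$-bundle. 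Rather than computing each $[\Sym^d Y]$ separately, I would exploit the multiplicativity of $Z_{\text{mot}}$ (Proposition~\ref{zeta1}): $Z_{\text{mot}}$ is a homomorphism from the additive group $K_0(\mathrm{Var}/K)$ to $(1 + K_0(\mathrm{Var}/K)[[t]], \cdot)$. However, this homomorphism property applies to the scissor relation $[X] = [Z] + [X\setminus Z]$ only in the sense that $Z_{\text{mot}}$ of a class is well-defined; the genuinely useful statement is the Litt/Larsen–Lunts formula $Z_{\text{mot}}(U \sqcup V, t) = Z_{\text{mot}}(U,t)\cdot Z_{\text{mot}}(V,t)$ for disjoint (locally closed) pieces. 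Thus from $X = (X\setminus P) \sqcup P$ and $Y = (Y \setminus E) \sqcup E = (X\setminus P) \sqcup E$ I get
\[
Z_{\text{mot}}(X,t) = Z_{\text{mot}}(X\setminus P, t)\cdot Z_{\text{mot}}(\Spec L, t), \qquad Z_{\text{mot}}(Y,t) = Z_{\text{mot}}(X\setminus P, t)\cdot Z_{\text{mot}}(E,t).
\]
Since both $Z_{\text{mot}}(\Spec L,t)$ and $Z_{\text{mot}}(E,t) = Z_{\text{mot}}(\PP^1_L,t)$ are rational (the former because $\Sym^d(\Spec L)$ are the symmetric powers of a finite étale $K$-scheme, giving a zeta function that is rational by Kapranov/Litt for the $0$-dimensional case, and indeed $Z_{\text{mot}}(\Spec L, t) = \prod$ over the factors; the latter by Proposition~\ref{zeta2} applied to $\PP^1_L = \Spec L \sqcup \AA^1_L$, yielding $Z_{\text{mot}}(\PP^1_L, t) = Z_{\text{mot}}(\Spec L, t)\cdot Z_{\text{mot}}(\Spec L, \LL t)$) and are moreover invertible as power series (constant term $1$), the ratio
\[
\frac{Z_{\text{mot}}(Y,t)}{Z_{\text{mot}}(X,t)} = \frac{Z_{\text{mot}}(\PP^1_L, t)}{Z_{\text{mot}}(\Spec L, t)} = Z_{\text{mot}}(\Spec L, \LL t)
\]
is a rational function. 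Hence $Z_{\text{mot}}(X,t)$ is rational iff $Z_{\text{mot}}(Y,t)$ is.

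The step I expect to be the main obstacle is the justification that weak factorization applies and that one may work with it cleanly over a non-closed field $K$ of characteristic zero: the cited form (Abramovich–Karu–Matsuki–Włodarczyk, \cite{AKMW}) is stated over algebraically closed fields, but as the paper already remarks after Theorem~\ref{Lar_Lun}, it descends to arbitrary characteristic-zero fields. One must check that the intermediate varieties in the factorization can be taken smooth projective (not merely proper) — in dimension $2$ this is automatic since a smooth proper surface is projective — and that the centers of the blow-ups are closed points with separable (automatically, in characteristic zero) residue field extensions, so that the exceptional divisors are genuinely $\PP^1$-bundles of the stated form. A secondary technical point is confirming that $Z_{\text{mot}}$ is multiplicative under disjoint unions of locally closed subvarieties in the strong sense used above; this is exactly \cite[Proposition 7.28]{Zeta} (our Proposition~\ref{zeta1}) combined with the scissor relations, applied to the stratification $X = (X\setminus P)\sqcup P$. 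Once these foundational points are in place, the argument is the short computation above, and the rationality of $Z_{\text{mot}}(\Spec L, t)$ for the finite $K$-scheme $\Spec L$ — which reduces, after base change considerations, to Kapranov's computation for a point — completes the proof.
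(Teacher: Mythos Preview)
Your proposal is correct and follows essentially the same approach as the paper: reduce via weak factorization to a single blow-up at a closed point, then use the multiplicativity of $Z_{\text{mot}}$ (Proposition~\ref{zeta1}) together with Proposition~\ref{zeta2} to express $Z_{\text{mot}}(Y,t)/Z_{\text{mot}}(X,t) = Z_{\text{mot}}(\Spec \kk(P), \LL t)$, which is rational since zero-dimensional schemes in characteristic zero have rational motivic zeta function. The paper does this slightly more tersely by writing the class relation $[U]=[V]+\LL\cdot[\Spec\kk(x)]$ and applying the group homomorphism property directly, whereas you pass through the intermediate variety $X\setminus P$; the content is identical.
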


\begin{proof}

By \cite[Theorem 0.3.1]{AKMW}, every birational map between $X$ and $Y$ can be factored into a sequence of blowings up and blowings down with centers in nonsingular points. But if $\psi: U  \dashrightarrow V$ in a morphism of two surfaces obtained by blowing up a smooth point $x \in V$, then $[U] - [\PP^{1}_{k}] \cdot [\Spec{\kk(x)}]= [V] - [\Spec{\kk(x)}]$, hence
\[
Z_{\text{mot}}(U, t) = Z_{\text{mot}}(V, t) \cdot Z_{\text{mot}}(\Spec{\kk(x)}, \LL t).
\]
One can show that the motivic zeta function of a zero dimensional scheme over a field of characteristic zero is always rational, so $Z_{\text{mot}}(U, t)$ is rational if and only if $Z_{\text{mot}}(V, t)$ is rational.

\end{proof}

\subsection{Embedding of symmetric powers}

In this section we prove the existence of the embedding $\Sym^{d}(\PP{\cE}) \to \PP{(\Sym^{d}{\cE})}$ that was used in the course of the proof of proposition \ref{highsym}.

\begin{proposition} \label{emb}

Let $S$ be a scheme over $\Spec{K}$, where $K$ is a field of characteristic zero, a let $\cE$ be an $\alpha$-twisted locally free sheaf of finite rank on $S$ for $\alpha \in \Br(S)$. Then for every $d \in \ZZ_{\ge 0}$, there exists a closed immersion
\[
\varphi: \Sym^{d}(\PP{\cE}) \to \PP{(\Sym^{d}{\cE})}.
\]

\end{proposition}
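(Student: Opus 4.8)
The plan is to construct $\varphi$ in the split case first, where $\alpha = e$ and $\cE$ is an honest locally free sheaf, and then to observe that the construction is local on $S$ in the étale topology and $S_d$-equivariant in a way that descends to the twisted setting. In the split case, working $T$-pointwise, a $T$-point of $\Sym^d(\PP(\cE))$ is (after passing to an étale cover trivializing the $S_d$-torsor $\PP(\cE)^d \to \Sym^d(\PP(\cE))$, equivalently working with $\PP(\cE)^d(T)$ and dividing out) an unordered $d$-tuple of surjections $h^\ast\cE \twoheadrightarrow \cL_i$ onto line bundles; I would send this to the surjection obtained by the composite
\[
h^\ast(\Sym^d\cE) \to \Sym^d(h^\ast\cE) \to \Sym^d(h^\ast\cE / \textstyle\sum_{i} \ker u_i)\cong \bigotimes_{i=1}^d \cL_i,
\]
or more intrinsically the image of $\Sym^d$ applied to the product map $h^\ast\cE \to \bigoplus \cL_i$, landing in the line bundle $\cL_1\otimes\cdots\otimes\cL_d$. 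The point is that this composite surjection onto a line bundle is manifestly symmetric in the $\cL_i$ and therefore factors through $\Sym^d(\PP(\cE))$, defining a morphism to $\PP(\Sym^d\cE)$; one then checks it is a closed immersion. For $\PP^n$ this is the classical fact that $\Sym^d(\PP^n)$ embeds into $\PP(\Sym^d(K^{n+1})) = \PP\left(\binom{n+d}{d}-1\right)$ via multiplication of linear forms, and the fibral criterion for closed immersions (properness of $\Sym^d(\PP(\cE))$ over $S$ plus injectivity on points and on tangent spaces, which one verifies over an algebraically closed field using that a degree-$d$ effective zero-cycle on $\PP^n$, i.e. an unordered $d$-tuple of points, is recovered from the corresponding product of linear forms up to scalar — here the characteristic-zero hypothesis enters, cf. Theorem~\ref{sym_poly}) upgrades it to a closed immersion.

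Next I would make the construction functorial and étale-local. The map $\varphi$ above is defined by a natural transformation of functors of points and hence commutes with base change $S' \to S$; in particular, if $\mathfrak{U} = (U_i \to S)$ is an étale cover on which $\alpha$ trivializes, we get closed immersions $\varphi_i: \Sym^d_{U_i}(\PP(\cE_i)) \to \PP(\Sym^d(\cE_i))$ over each $U_i$. Because $\Sym^d(\PP(\cE))$ is the étale sheafification (quotient) of $\PP(\cE)^{\times_S d}$ by $S_d$ and this formation commutes with the flat base changes $U_i \to S$ (Remark~\ref{bijection}), $\Sym^d_{U_i}(\PP(\cE_i)) \cong \Sym^d(\PP(\cE)) \times_S U_i$; likewise $\PP(\Sym^d\cE)\times_S U_i \cong \PP(\Sym^d(\cE_i))$ since $\Sym^d\cE$ is the $\alpha^d$-twisted sheaf glued from the $\Sym^d(\cE_i)$ (Proposition~\ref{properties}(4)) and passing to the associated projective bundle kills the twist. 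So both sides are the restrictions to $U_i$ of the global $S$-schemes $\Sym^d(\PP(\cE))$ and $\PP(\Sym^d\cE)$.

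The final step is descent. The twisting cocycle $(\varphi_{ij})$ for $\cE$ induces the cocycle $(\Sym^d\varphi_{ij})$ for $\Sym^d\cE$, and the naturality from the previous paragraph shows precisely that the local immersions $\varphi_i$ are compatible with these gluing data: over $U_{ij}$, conjugating $\varphi_j$ by the descent isomorphisms coming from $(\varphi_{ij})$ yields $\varphi_i$, because $\varphi$ is built out of operations ($\Sym^d$, forming $\PP$, quotienting by $S_d$) that are all natural in $\cE$. By faithfully flat (étale) descent for morphisms of schemes, the $\varphi_i$ glue to a single morphism $\varphi: \Sym^d(\PP\cE) \to \PP(\Sym^d\cE)$ over $S$, and being a closed immersion is étale-local on the target, so $\varphi$ is a closed immersion.

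\textbf{Main obstacle.} The routine-but-delicate part is the split case: verifying cleanly that the pointwise recipe "multiply the defining linear forms" really defines a \emph{morphism of schemes} (not just a map on $\bar K$-points) — i.e. writing the natural transformation on $T$-points so that it visibly factors through the $S_d$-quotient — and then checking the closed-immersion criterion, for which injectivity on tangent spaces at the non-reduced points of $\Sym^d(\PP^n)$ (the diagonals) is the subtle point and is where characteristic zero is genuinely used, via the description of the completed local rings as rings of multisymmetric invariants. The descent bookkeeping is then formal once naturality is spelled out carefully.
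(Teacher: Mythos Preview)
Your outline matches the paper's proof: define the map on $\PP(\cE)^d$ via $T$-points in the split case, observe $S_d$-invariance and factor through the quotient, then glue over an étale trivializing cover by naturality of the construction in $\cE$. One point you skate over: for the $T$-pointwise recipe to actually land in $\PP(\Sym^d\cE)(T)$, the composite
\[
\Sym^d(h^*\cE)\xrightarrow{\sim}\mathcal{ST}^d(h^*\cE)\hookrightarrow (h^*\cE)^{\otimes d}\xrightarrow{u_1\otimes\cdots\otimes u_d}\cL_1\otimes\cdots\otimes\cL_d
\]
must be \emph{surjective} over an arbitrary local ring, not just over a field; the paper isolates this as Lemma~\ref{surj} (some symmetrized product of the matrix entries of the $u_i$ is a unit, proved by noting the coefficients of $\prod_i(\sum_j r^i_j t_j)$ cannot all lie in the maximal ideal). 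For the closed-immersion check the paper takes a more direct route than your fibral criterion: after reducing to $\PP^n_R$ it identifies the induced map on affine charts with the ring map $R[t_{k_1,\ldots,k_n}]\to R[X_1,\ldots,X_d]^{S_d}$ sending $t_{k_1,\ldots,k_n}$ to the elementary multisymmetric polynomial $e_{k_1,\ldots,k_n}$, and Theorem~\ref{sym_poly} gives surjectivity outright---your tangent-space argument would work but unwinds to the same algebra at the diagonal points. (Minor quibble: $\PP(\cE)^d\to\Sym^d(\PP(\cE))$ is not an $S_d$-torsor along the diagonals, so the parenthetical about ``trivializing the torsor'' is off; the correct move is exactly what you say next---define upstairs and factor through the quotient by invariance.)
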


Before proving the proposition, let us introduce some notation. 

Let $R$ be a ring. For any $R$-module $M$ the symmetric group $S_d$ acts on $M^{\otimes d}$ by
\[
\sigma(m_1 \otimes \cdots \otimes m_d) = m_{\sigma^{-1}(1)} \otimes \cdots \otimes m_{\sigma^{-1}(d)} \quad \forall \, \sigma \in S_k.
\]

We denote by $\mathcal{ST}^{d}(M)$ the $R$-submodule of symmetric $d$-tensors:
\[
\mathcal{ST}^{d}(M) = \{z \in M^{\otimes d} \mid \sigma z = z \; \, \forall \, \sigma \in S_k \}.
\]

It is a well-known fact from commutative algebra that if $d!$ is a unit in $R$, then the map
\begin{equation} \label{sym}
Sym: \Sym^d(M) \to \mathcal{ST}^{d}(M), \quad z \mapsto \sum_{\sigma \in S_d} \sigma z
\end{equation}
is an $R$-module isomorphism.

For technical convenience, we will also use the map
\begin{equation} \label{sym'}
Sym': \Sym^d(M) \to \mathcal{ST}^{d}(M), \quad z \mapsto \sum_{\substack{\sigma \in S_d \\ \sigma z \neq \tilde{\sigma} z}} \sigma z,
\end{equation}
where the sum is over all different tensors $\sigma z$ (for example, $Sym'(m_1 \otimes m_1 \otimes m_2) = Sym(m_1 \otimes m_1 \otimes m_2)/2$).

If $(X, \cO_X)$ is a scheme and $\cF$ is an $\cO_X$-module, we denote by $\mathcal{ST}^{d}(\cF)$ the sheaf attached to the presheaf $U \mapsto \mathcal{ST}^{d}(\Gamma(U, \cF))$. It is clear that if $X$ is a scheme over a field of characteristic zero, then the map $\Sym^{d}(\cF) \to \mathcal{ST}^{d}(\cF)$ induced by (\ref{sym}) or (\ref{sym'}) is an isomorphism of $\cO_X$-modules.

\begin{lemma} \label{surj}

Let $(R, \fm)$ be a local ring. Suppose we are given $d$ surjective morphisms $(\varphi_i: R^n \to R)_{i=1}^{d}$ of $R$-modules. Then the composition
\[
(\varphi_1 \otimes \cdots \otimes \varphi_d) \circ i: \; \, \mathcal{ST}^{d}(R^n) \overset{i}{\hookrightarrow} (R^n) \otimes_R \cdots \otimes_R (R^n) \to R \otimes_R \cdots \otimes_R R
\]
is surjective.

\end{lemma}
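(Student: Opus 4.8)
The plan is to reduce the claim to a statement over the residue field of $R$, where it becomes the elementary fact that a product of nonzero linear forms is a nonzero polynomial. First I would fix the standard basis $e_1,\dots,e_n$ of $R^n$ and set $a_{ij}\defeq\varphi_i(e_j)\in R$; since $\varphi_i$ is surjective and $R$ is local, the ideal $(a_{i1},\dots,a_{in})$ equals $R$, so for each $i$ some entry $a_{ij}$ is a unit. I would then recall that $\mathcal{ST}^{d}(R^n)$, being the $S_d$-invariant submodule of $(R^n)^{\otimes d}$ on whose standard basis $S_d$ acts by permuting tensor factors, is a free $R$-module with basis the orbit sums
\[
m_{\mathbf a}=\sum_{(j_1,\dots,j_d)}e_{j_1}\otimes\cdots\otimes e_{j_d},
\]
one for each multi-index $\mathbf a=(a_1,\dots,a_n)\in\ZZ_{\ge 0}^{n}$ with $a_1+\cdots+a_n=d$, the sum ranging over the distinct orderings $(j_1,\dots,j_d)$ of the multiset containing $a_k$ copies of $k$. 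In particular $\mathcal{ST}^{d}(R^n)$, and hence the cokernel of $\psi\defeq(\varphi_1\otimes\cdots\otimes\varphi_d)\circ i$, is finitely generated over the local ring $R$, so by Nakayama's lemma it suffices to show that $\psi$ becomes surjective after applying $-\otimes_R\kappa$, where $\kappa\defeq R/\fm$.

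Second, I would identify the reduction $\psi\otimes_R\kappa$. Writing $\bar\varphi_i\colon\kappa^n\to\kappa$ for the reductions of the $\varphi_i$, the map $\psi\otimes_R\kappa$ sends the image of $m_{\mathbf a}$ to $(\bar\varphi_1\otimes\cdots\otimes\bar\varphi_d)(\bar m_{\mathbf a})$, where $\bar m_{\mathbf a}\in(\kappa^n)^{\otimes d}$ is the corresponding orbit sum; as the $\bar m_{\mathbf a}$ span $\mathcal{ST}^{d}(\kappa^n)$, it is therefore enough to prove that the linear functional
\[
\bar\psi\colon\mathcal{ST}^{d}(\kappa^n)\hookrightarrow(\kappa^n)^{\otimes d}\xrightarrow{\ \bar\varphi_1\otimes\cdots\otimes\bar\varphi_d\ }\kappa
\]
is nonzero. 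The one thing requiring care here is that $\kappa$ may have positive characteristic, so that $\mathcal{ST}^{d}$ need not agree with $\Sym^{d}$ and the naive symmetrization $\sum_{\sigma\in S_d}\sigma(\,\cdot\,)$ may vanish identically; this is exactly why I keep the explicit orbit-sum basis in play instead of passing through $\Sym^{d}(\kappa^n)$.

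Finally, to prove $\bar\psi\neq 0$ I would run a short generating-function computation. Introducing commuting variables $x_1,\dots,x_n$ and the linear forms $\ell_i=\sum_{j=1}^{n}\bar a_{ij}x_j\in\kappa[x_1,\dots,x_n]$ --- nonzero, since some $\bar a_{ij}$ is a unit --- one expands
\[
\prod_{i=1}^{d}\ell_i(x)=\sum_{(j_1,\dots,j_d)}\Big(\prod_{i=1}^{d}\bar a_{ij_i}\Big)\,x_{j_1}\cdots x_{j_d}
\]
and checks that the coefficient of $x^{\mathbf a}\defeq x_1^{a_1}\cdots x_n^{a_n}$ is exactly $\bar\psi(\bar m_{\mathbf a})$: the orderings in the definition of $m_{\mathbf a}$ correspond term by term to the choices of $(j_1,\dots,j_d)$ contributing $x^{\mathbf a}$. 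Since $\kappa[x_1,\dots,x_n]$ is an integral domain and each $\ell_i$ is nonzero, the product $\prod_{i=1}^{d}\ell_i$ is a nonzero polynomial, so some coefficient --- hence some $\bar\psi(\bar m_{\mathbf a})$ --- is nonzero, and therefore $\bar\psi\neq 0$. (When $\kappa$ is infinite one may instead argue directly, picking $v\in\kappa^n$ outside the finitely many hyperplanes $\ker\bar\varphi_i$ and taking $z=v^{\otimes d}\in\mathcal{ST}^{d}(\kappa^n)$, which maps to $\prod_{i}\bar\varphi_i(v)\neq 0$; the polynomial version also handles finite residue fields.) I expect the base-change step to be the only place requiring attention --- making sure the reduced map really is the above functional restricted to $\mathcal{ST}^{d}(\kappa^n)$ --- with everything else being formal.
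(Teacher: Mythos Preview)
Your proposal is correct and follows essentially the same route as the paper: both arguments compute the image of the orbit-sum basis of $\mathcal{ST}^{d}(R^n)$ as the coefficients of the product of linear forms $\prod_{i=1}^{d}\bigl(\sum_j a_{ij}t_j\bigr)$ and then use integrality of the polynomial ring over the residue field. The only cosmetic difference is that you invoke Nakayama to reduce to $\kappa=R/\fm$ before applying the domain argument, whereas the paper works directly in $R[t_1,\dots,t_n]$ and uses that $\fm R[t_1,\dots,t_n]$ is prime; these are the same step, and your care in handling the orbit sums without dividing by factorials is in fact slightly cleaner than the paper's presentation.
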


\begin{proof}

We put $\varphi = \varphi_1 \otimes \cdots \otimes \varphi_d$. Let us choose a basis $(e_1, \ldots, e_n)$ in $R^n$, and let $\varphi_i(e_j) = r_j^{i}$. Since $\varphi_i$ is surjective, it follows that $(r_1^{i}, \ldots, r_n^{i}) = R$.

Let $1 \le j_1 \le j_2 \le \ldots \le j_d \le n$ be natural numbers, and let $m_1 < \ldots < m_s$ be such that the first $l_1$ of the $j_i$ are equal to $m_1$, the next $l_2$ of the $j_i$ are equal to $m_2$, and so on. For $\sigma \in S_d$ and $r^{1}_{j_1}, \ldots, r^{d}_{j_d} \in R$ we put $\sigma(r^{1}_{j_1} \cdot \ldots \cdot r^{d}_{j_d}) = r^{1}_{j_{\sigma^{-1}(1)}} \cdot \ldots \cdot  r^{d}_{j_{\sigma^{-1}(d)}}$ and
\[
Sym'(r^{1}_{j_1} \cdot \ldots \cdot r^{d}_{j_d}) =  \frac{1}{l_1! \ldots l_s!} \sum_{\sigma \in S_d} \sigma(r^{1}_{j_1} \cdot \ldots \cdot r^{d}_{j_d}).
\]
It is clear that $Sym'$ is well-defined. Identifying $R^{\otimes d}$ with $R$ via $a_1 \otimes \ldots \otimes a_d \mapsto a_1 \cdot \ldots \cdot a_d$, we see that
\[
\varphi \bigl(Sym'(e_{j_1} \otimes \cdots \otimes e_{j_d})\bigr) = Sym'(r^1_{j_1} \cdot \ldots \cdot r^d_{j_d}).
\]
It is enough to prove that there exist $1 \le j_1 \le j_2 \le \ldots \le j_d \le n$ such that $Sym'(r^1_{j_1} \cdot \ldots \cdot r^d_{j_d}) \notin \fm$. Indeed, in this case $\varphi(Sym'(e_{j_1} \otimes \cdots \otimes e_{j_d}))$ is a unit in $R$ and the lemma follows.

Assume that $Sym'(r^1_{j_1} \cdot \ldots \cdot r^d_{j_d}) \in \fm$ for every $1 \le j_1 \le j_2 \le \ldots \le j_d \le n$. Consider the polynomial ring $R[t_1, \ldots, t_n]$. It is easily seen that
\[
f(t_1, \ldots, t_n) \defeq \prod_{i=1}^n (r_1^{i}t_1+\cdots+r_n^{i}t_n) = \sum_{1 \le j_1 \le \ldots \le j_d \le n} Sym'(r^1_{j_1} \cdot \ldots \cdot r^d_{j_d})t_{j_1} \ldots t_{j_d}. 
\]
Therefore, $f \in \fm R[t_1, \ldots, t_n]$. Note that the ideal $\fm R[t_1, \ldots, t_n]$ is prime, since one has $R[t_1, \ldots, t_n]/\fm R[t_1, \ldots, t_n] \cong (R/\fm R)[t_1, \ldots, t_n]$, hence $r_1^{i}t_1+\cdots+r_n^{i}t_n \in \fm R[t_1, \ldots, t_n]$ for some $i$. But then $r_{j}^{i} \in \fm$ for every $j \in \{1, \ldots, n \}$, which is a contradiction with the surjectivity of $\varphi_i$.

\end{proof}

\begin{proof}[Proof of proposition \ref{emb}]

Assume first that $\cE$ is a regular (not-twisted) sheaf on $S$. Let $Sym': \Sym^{d}(\cE) \xrightarrow{\sim} \mathcal{ST}^{d}(\cE)$ be an isomorphism described above, and let $i: \mathcal{ST}^{d}(\cE) \hookrightarrow \cE^{\otimes d}$ be an inclusion. Recall that for every $S$-scheme $h: T \to S$ we have $\Sym^d(h^{\ast}\cE) = h^{\ast}(\Sym^d(\cE))$, $\mathcal{ST}^{d}(h^{\ast}\cE) = h^{\ast}(\mathcal{ST}^{d}(\cE))$ and  $h^{\ast}(\cE^{\otimes d}) = (h^{\ast}\cE)^{\otimes d}$. We define
\[
\begin{aligned}
\tilde{\varphi}(T): \PP(\cE)^{d}(T) \to & \; \PP(\Sym^{d} \cE)(T), \\
(u_i: h^{\ast}\cE \twoheadrightarrow \mathcal{L}_i)_{i=1}^d  \mapsto & \; ((u_1 \otimes \cdots \otimes u_d) \circ h^{\ast}(i \circ Sym'): \\
& h^{\ast}( \Sym^{d}\cE) \xrightarrow{\sim} h^{\ast} (\mathcal{ST}^{d}(\cE)) \hookrightarrow h^{\ast}(\cE^{\otimes d}) \twoheadrightarrow \mathcal{L}_1 \otimes \cdots \otimes \mathcal{L}_d). 
\end{aligned}
\]
Since $\cE$ and $\mathcal{L}_1, \ldots, \mathcal{L}_d$ are locally free, it follows from lemma \ref{surj} that the composition $\mathcal{ST}^{d}(h^{\ast}\cE) \to (h^{\ast}\cE)^{\otimes d} \twoheadrightarrow \mathcal{L}_1 \otimes \cdots \otimes \mathcal{L}_d$ is surjective on stalks, so the map $\tilde{\varphi}(T)$ is well-defined. Clearly, this map is functorial in $T$ and invariant under the action of $S_d$ on $\PP(\cE)^{d}$. Hence we obtain a morphism $\varphi:  \Sym^{d}(\PP{\cE}) \to \PP{(\Sym^{d}{\cE})}$ of $S$-schemes.

If $(\cE, \psi)$ is an $\alpha$-twisted vector bundle, choose a trivializing \'etale cover $(U_i \to S)_{i \in I}$. Since quotients by finite groups commute with flat extensions (\cite[V.1.9]{SGA1}), we have $\Sym^{d}(\PP{\cE}) \times_S U_i \cong \Sym^{d} \PP(\cE_i)$. It is easy to check that the diagram
\[
\begin{tikzcd}
\Sym^{d}(\PP{\left. \cE_j \right|_{U_{ij}}})
\arrow{r}{} \arrow{d}{\Sym^{d}(\textbf{P}(\psi_{ij}))}
& \PP(\Sym^{d}(\left. \cE_j \right|_{U_{ij}})) \arrow{d}{\textbf{P}(\Sym^{d} \psi_{ij})} \\
\Sym^{d}(\PP{\left. \cE_i \right|_{U_{ij}}})
\arrow{r}{}
& \PP(\Sym^{d}(\left. \cE_i \right|_{U_{ij}}))
\end{tikzcd}
\]
is commutative, so we get a well-defined map $\varphi:  \Sym^{d}(\PP{\cE}) \to \PP{(\Sym^{d}{\cE})}$.

To show that $\varphi$ is a closed immersion, we may assume that $S = \Spec{R}$ and $\PP(\cE) = \PP_R^{n}$, where $R$ is an algebra over an algebraically closed field of characteristic zero (since the property of being a closed immersion is fpqc local on the target and stable under faithfully flat descent). In this case, the map is given on $A$-valued points by
\[
(A^{n+1})^{d} \ni (r^{1}_0, \ldots, r^{1}_n, r^{2}_0, \ldots, r^{2}_n, \ldots, r^{d}_n) \mapsto (Sym'(r^1_{j_1}, \ldots, r^{d}_{j_d}))_{0 \le j_1 \le \ldots \le j_d \le n} \in A^{N},
\]
where $Sym'$ as in the proof of the lemma, $N = C_{d+n}^d$. Since sets of the form $\Sym^{d}(\AA_R^{n})$ form an affine open cover of $\Sym^{d}(\PP_R^{n})$, it is enough to prove that the map
\[
R[t_{k_1, \ldots, k_n}]_{(1 \le \sum_{i=1}^n k_i \le d)} \to R[X_1, \ldots, X_d]^{S_d}, \quad t_{k_1, \ldots, k_n} \mapsto e_{k_1, \ldots, k_n}(X_1, \ldots, X_n),
\]
where $X_i = (x_{i, 1}, \ldots, x_{i, n})$, is surjective. But this is precisely the statement of theorem \ref{sym_poly}.

\end{proof}

\begin{remark}

In the case $\PP{\cE} = \PP_{k}^{n} = \PP(V)$, where $K$ is algebraically closed field, the constructed map is the Chow embedding:
\[
\gamma: \text{G}(1, d, n+1) \to \PP(\Sym^{d}(V)), \quad (v_1+ \cdots + v_d) \mapsto v_1 \vee \cdots \vee v_d,
\]
where $\text{G}(1, d, n+1) \cong \Sym^{d}(\PP^{n})$ is the Chow variety of $0$-cycles in $\PP^{n}$ of degree $d$.

\end{remark}

\begin{remark}

Let $\Delta: \PP{\cE} \to \Sym^{d}(\PP{\cE})$ be the diagonal embedding. Then $\varphi \circ \Delta: \PP{\cE} \to \PP{(\Sym^{d}{\cE})}$ is the $d$-fold Veronese embedding.

\end{remark}

\begin{remark}

Let $B$ be a Severi-Brauer variety of dimension $n-1$ over $K$, and let $d$ be the order of $\alpha = [B]$ in $\Br(K)$. Write $B = \PP{\cE}$, where $\cE$ is an $\alpha$-twisted vector bundle of rank $n$. Then $\Sym^{d}(\cE)$ is an ordinary bundle of rank $N = C_{d+n-1}^{d}$, so $\PP(\Sym^{d}(\cE)) \cong \PP_{K}^{N-1}$ and we have a projective embedding $\varphi: \Sym^{d}(B) \to \PP_{K}^{N-1}$.

Note that there is also an embedding $\varphi: \Sym^{d}(\PP^{n-1}) \to \PP_{K}^{N-1}$. However, if $B$ is non-trivial and $\dim{B} > 1$, then the symmetric power $\Sym^{r}(B)$ is never isomorphic to $\Sym^{r}(\PP^{n-1})$ for some integer $r \ge 0$. Indeed, it follows from \ref{theorem:automorphisms} that the $K^{sep}/K$-forms of both $\PP^{n-1}$ and $\Sym^{r}(\PP^{n-1})$ are classified by $\mathrm{H}^{1}(\Gal(K^{sep}/K), \PGL_n(K^{sep}))$. Thus we can associate with $\Sym^{r}(B)$ a unique Brauer class coinciding with the class of $B$, so $\Sym^{r}(B) \ncong \Sym^{r}(\PP^{n-1})$ (yet they are birationally isomorphic if $d = n$, since $\Sym^{n}(B)$ is rational in this case (see \cite[Theorem 3]{Kollar}).

\end{remark}




\subsection{Motivic zeta function of Severi-Brauer varieties of index 2.} Let us compute the motivic zeta function of a Severi-Brauer variety $C$ of dimension $1$ over a field, which is known to be a conic. Assume that $C$ is non-trivial Severi-Brauer variety, i. e. $C = \PP{\cE}$, where $\cE$ is a simple bundle of rank $2$ and $\End{\cE}$ is a central division algebra of degree $2$. Then $\dim(\Sym^{d}(C)) = d$, so $\varphi: \Sym^{d}(C) \to \PP(\Sym^{d}(\cE))$ is an isomorphism. 

Note that $\Sym^{d}(\cE)$ is an $\alpha^n$-twisted bundle of rank $d+1$. We have $\alpha^2 = e$ by proposition \ref{rank}, hence $\Sym^{2k}(\cE)$ is a regular bundle on $\Spec{K}$ and $\Sym^{2k}(C) \cong \PP^{2k}$. Moreover, since $\Sym^{2k+1}(\cE)$ is an $\alpha$-twisted bundle of rank $2k+2$, it follows from proposition \ref{fields} that $\Sym^{2k+1}(\cE) \cong \cE^{\oplus(k+1)}$, hence $[\Sym^{2k+1}(C)] = [C](1+\LL^2+ \cdots + \LL^{2k})$ by corollary \ref{motive_of_a_sum}. Thus we have

\[
\aligned
Z_{\text{mot}}(C, t) & = \sum_{k \ge 0} (1+\LL+\LL^2+ \cdots + \LL^{2k})
t^{2k} + \sum_{k \ge 0} \bigl([C](1+\LL^2+ \cdots + \LL^{2k}) \bigr) t^{2k+1} = \\
& = \sum_{k \ge 0} \frac{1-\LL^{2k+1}}{1- \LL}t^{2k} + \sum_{k \ge 0} \frac{[C](1-\LL^{2(k+1)})}{1- \LL^2}t^{2k+1} = \frac{1+ [C] t + \LL t^2}{(1-t^2)(1- \LL^2 t^2)}.
\endaligned
\]

\begin{proposition}

If $B$ is a Severi-Brauer variety of index 2, then $Z_{\text{mot}}(B, t)$ is rational.

\end{proposition}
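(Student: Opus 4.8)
The plan is to reduce the statement, via Proposition \ref{Neo} and Observation \ref{rationality_of_matrix}, to the explicit computation of the motivic zeta function of a non-trivial conic that was carried out immediately above. The point is that the minimal twisted linear subvariety $B_{\min}$ of a Severi--Brauer variety of index $2$ is itself a conic, so all the real work has already been done.

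First I would record the bookkeeping. By definition $i(B) = \rk(\cE_{\min})$, and since $\cE \cong \cE_{\min}^{\oplus r}$ one has $i(B) \mid m$, where $m = \dim B + 1$; thus $i(B) = 2$ forces $m$ to be even, and we set $n = \dim(B_{\min}) + 1 = \rk(\cE_{\min}) = 2$ and $r = m/n = m/2$. Moreover $B_{\min} = \PP(\cE_{\min})$ is one-dimensional, i.e. a conic, and it is \emph{non-trivial}: if it had a rational point then $\cE_{\min}$ would contain an $\alpha$-twisted line sub-bundle, forcing $\alpha = e$ by Proposition \ref{linebundle} and hence $i(B) = 1$, a contradiction. (If one allows "index $2$" to include the split case $i(B)=1$, then $B \cong \PP^m_K$ and rationality is simply equation (\ref{P^n}), so there is nothing to prove there.)

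Next I would invoke the computation preceding this proposition: for a non-trivial conic $C = \PP(\cE)$ with $\cE$ a simple rank-$2$ bundle one has
\[
Z_{\text{mot}}(C, t) = \frac{1 + [C]\,t + \LL\, t^2}{(1-t^2)(1-\LL^2 t^2)},
\]
which is rational in the required sense, the denominator being invertible in $K_0(\text{Var}/K)[[t]]$ as it has constant term $1$. Hence $Z_{\text{mot}}(B_{\min}, t)$ is rational. Finally, the identity $Z_{\text{mot}}(B, t) = \prod_{i=0}^{r-1} Z_{\text{mot}}(B_{\min}, \LL^{2i} t)$ from the proof of Observation \ref{rationality_of_matrix} (which uses Propositions \ref{zeta1}, \ref{zeta2} and \ref{Neo}) exhibits $Z_{\text{mot}}(B,t)$ as a finite product of rational functions obtained from $Z_{\text{mot}}(C,t)$ by the substitutions $t \mapsto \LL^{2i}t$, and a finite product of rational functions is rational; this completes the argument. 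There is no genuine obstacle here — the analytic content (the closed form for the conic) and the structural content (the splitting of $[B]$ and the multiplicativity of $Z_{\text{mot}}$) are both already available — so the only thing to be careful about is the elementary divisibility bookkeeping that makes $n = 2$ and $r = m/2$ the correct parameters.
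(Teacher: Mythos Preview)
Your proposal is correct and follows exactly the same approach as the paper: reduce to $B_{\min}$ via Observation \ref{rationality_of_matrix}, and use the explicit computation of $Z_{\text{mot}}(C,t)$ for a conic done just above. The paper's proof is the one-line ``direct consequence of \ref{rationality_of_matrix}''; your version simply unpacks the bookkeeping that $i(B)=2$ forces $B_{\min}$ to be a (non-trivial) conic.
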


\begin{proof}

This is a direct consequence of corollary \ref{rationality_of_matrix}.

\end{proof}

\subsection{Motivic zeta function of a Severi-Brauer surface} Consider the completion of $K_0(\text{Var}/K)$ at the ideal $(\LL)$:

\[
R \defeq \varprojlim K_0(\text{Var}/K)/(\LL^n)
\]

The kernel of the canonical morphism $K_0(\text{Var}/K) \to R$ is the intersection of the powers of $\LL$, but it is not known whether $\bigcap_n (\LL^n) = (0)$.

Note that the classes of $[\PP^n]$ are invertible in $R$. In particular, if $\bigcap_n (\LL^n) = (0)$, then they are not zero divisors in $K_0(\text{Var}/K)$.

\begin{theorem} \label{zetaforsurface}

Let $B$ be a Severi-Brauer surface over a field $K$ of characteristic zero. Then
\[
Z_{\text{mot}}(B, t) = t[B] \cdot \frac{1+(1+\LL^2)t+(\LL^2+\LL^4)t^3+\LL^4 t^4}{(1-t^3)(1-\LL^3 t^3)(1-\LL^6 t^3)}+\sum_{i \ge 0} [\Sym^{3i}(B)]t^{3i}
\]
in $R$. If $\bigcap_n (\LL^n) = (0)$, then the equality holds in $K_0(\text{Var}/K)$.

\end{theorem}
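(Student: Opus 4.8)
The plan is to compute $Z_{\text{mot}}(B,t)$ by splitting the sum according to the residue of $n$ modulo $3$, since $3$ is the order of $\alpha=[B]$ in $\Br(K)$ when $B$ is non-trivial (and the trivial case is covered by \eqref{P^n}). Write $Z_{\text{mot}}(B,t)=\sum_{i\ge 0}[\Sym^{3i}(B)]t^{3i}+\sum_{i\ge 0}[\Sym^{3i+1}(B)]t^{3i+1}+\sum_{i\ge 0}[\Sym^{3i+2}(B)]t^{3i+2}$. The second summand on the right-hand side of the theorem is literally the first of these three pieces, so the whole content of the statement is that the sum over $n\equiv 1,2\pmod 3$ equals $t[B]$ times the displayed rational function. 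So the real task is to get closed forms for $[\Sym^{3i+1}(B)]$ and $[\Sym^{3i+2}(B)]$, at least after passing to $R=\varprojlim K_0(\text{Var}/K)/(\LL^n)$.

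The key input is Proposition \ref{highsym}: since $3i+1$ and $3i+2$ are both coprime to $3=\ord(\alpha)$, we get $[\Sym^{3i+1}(B)]\cdot[\PP^2]=[\Sym^{3i+1}(\PP^2)]\cdot[B]$ and likewise for $3i+2$. In $R$ the class $[\PP^2]=1+\LL+\LL^2$ is invertible, so I can divide and obtain $[\Sym^{3i+1}(B)]=[B]\cdot[\Sym^{3i+1}(\PP^2)]/[\PP^2]$ in $R$, and similarly for the other residue. Now Proposition \ref{Grassmannian} gives $[\Sym^{d}(\PP^2)]=p_{d,2}(\LL)=\binom{d+2}{2}_{\LL}$, an explicit polynomial in $\LL$. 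Therefore
\[
\sum_{i\ge 0}[\Sym^{3i+1}(B)]t^{3i+1}+\sum_{i\ge 0}[\Sym^{3i+2}(B)]t^{3i+2}
=\frac{[B]}{[\PP^2]}\Big(t\sum_{i\ge 0}p_{3i+1,2}(\LL)t^{3i}+t^2\sum_{i\ge 0}p_{3i+2,2}(\LL)t^{3i}\Big),
\]
an equality in $R[[t]]$, and the whole problem reduces to summing these two power series in the single variable $t$ with coefficients polynomials in $\LL$. This is an elementary generating-function computation: $p_{d,2}(\LL)=(1+\LL+\cdots+\LL^{d})(1+\LL+\cdots+\LL^{d+1})/(1+\LL)$ has a closed form as a $\ZZ[\LL,(1+\LL)^{-1}]$-combination of $\LL^{jd}$ for a few values of $j$, so each of the two sums over $i$ is a finite $\QQ(\LL)$-combination of geometric series $\sum_i (\LL^{3j}t^3)^i=1/(1-\LL^{3j}t^3)$. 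Collecting terms over the common denominator $(1-t^3)(1-\LL^3t^3)(1-\LL^6t^3)$ and multiplying back by $[\PP^2]$ (which clears the $(1+\LL)^{-1}$ and the other denominators introduced) should produce exactly $t[B]$ times the stated numerator $1+(1+\LL^2)t+(\LL^2+\LL^4)t^3+\LL^4t^4$. One sanity check worth doing along the way: specialising $\LL\mapsto q$ and $[B]\mapsto 1+q+q^2$ must recover the Hasse--Weil zeta function of $\PP^2_{\FF_q}$, i.e.\ $1/((1-t)(1-qt)(1-q^2t))$, which pins down the numerator.

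The main obstacle is not conceptual but bookkeeping: correctly summing the three arithmetic-progression-indexed series $\sum_i p_{3i+r,2}(\LL)t^{3i}$ for $r=0,1,2$ and verifying that, after restoring the factor $[\PP^2]=(1+\LL+\LL^2)$, all the $(1+\LL)$ denominators cancel and the numerator collapses to the claimed four-term polynomial. A secondary point to state carefully is the passage between $R$ and $K_0(\text{Var}/K)$: the identity $[\Sym^{3i+1}(B)]=[B][\Sym^{3i+1}(\PP^2)]/[\PP^2]$ is only justified in $R$ because $[\PP^2]$ need not be a non-zero-divisor in $K_0(\text{Var}/K)$; hence the theorem is stated in $R$, and the conditional upgrade to $K_0(\text{Var}/K)$ under $\bigcap_n(\LL^n)=(0)$ follows because that hypothesis makes $K_0(\text{Var}/K)\hookrightarrow R$ injective, so the identity of power series in $R[[t]]$ descends. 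Finally I would note that when $B$ is trivial the formula must and does agree with \eqref{P^n}: for $B=\PP^2$ one has $[\Sym^{3i}(\PP^2)]=p_{3i,2}(\LL)$ and the two pieces recombine into $1/((1-t)(1-\LL t)(1-\LL^2 t))$, providing a final consistency check.
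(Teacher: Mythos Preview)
Your proposal is correct and follows essentially the same route as the paper: split the zeta series by residues modulo $3$, apply Proposition~\ref{highsym} to the residues $1$ and $2$, invert $[\PP^2]$ in $R$, and reduce to summing the explicit Gaussian-binomial generating functions from Proposition~\ref{Grassmannian}. The only cosmetic difference is that the paper observes directly that $\binom{3k+3}{2}_q$ and $\binom{3k+4}{2}_q$ are divisible by $[3]_q=1+q+q^2$ as polynomials, so the quotient $[\Sym^{3i+r}(\PP^2)]/[\PP^2]$ is already a polynomial in $\LL$, whereas you propose a partial-fraction route through $(1+\LL)^{-1}$ and rely on cancellation at the end; the paper's observation makes the bookkeeping you flag as the ``main obstacle'' shorter.
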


\begin{proof}

By proposition \ref{highsym}, we have $[\Sym^{r}(B)] \cdot[\PP^{2}] = [\Sym^{r}(\PP^{2})] \cdot [B]$
for every $r$ which is not divisible by $3$. It is easy to check that the classes $[\Sym^{3k+1}(\PP^{2})] = [\Gr_{2, 3k+3}]$ and $[\Sym^{3k+2}(\PP^{2})] = [\Gr_{2, 3k+4}]$ are all divisible by $[\PP^2]$. Assuming that  $\bigcap_n (\LL^n) = (0)$ (or arguing in $R$), we can cancel by $[\PP^2]$ and, after simple calculations, we obtain:
\[
 \sum_{i \ge 0} [\Sym^{3i+1}(B)]t^{3i+1} = [B]  \sum_{i \ge 0} \frac{[\Sym^{3i+1}(\PP^2)]}{[\PP^{2}]}
 t^{3i+1} = t[B] \cdot 
 \frac{1+(\LL^2+\LL^4)t^3}{(1-t^3)(1-\LL^3 t^3)(1-\LL^6 t^3)};
\]
\[
 \sum_{i \ge 0} [\Sym^{3i+2}(B)]t^{3i+2} = [B]  \sum_{i \ge 0} \frac{[\Sym^{3i+2}(\PP^2)]}{[\PP^{2}]}
 t^{3i+2} = t^2[B] \cdot 
 \frac{1+\LL^2+\LL^4 t^3}{(1-t^3)(1-\LL^3 t^3)(1-\LL^6 t^3)}.
\]
Adding both expressions, we get the desired formula.

\end{proof}

One can easily prove that if $r$ is coprime with $n+1$, then $\binom{n+r}{r}_q$ is divisible by $\binom{n+1}{1}_q = [n+1]_q$ as a polynomial in $q$. Let us denote this polynomial by $g_{r, n}(q)$. It follows from propositions  \ref{highsym} and \ref{Grassmannian} that for a Severi-Brauer variety of dimension $n$ and for $r$ coprime with $n+1$ one has
\[
[\Sym^{r}(B)] = \frac{[\Sym^{r}(\PP^n)]}{[\PP^n]}[B] = \frac{[\Gr_{r, r+n}]}{[\PP^n]}[B] = g_{r, n}(\LL) [B]
\]
in $R$.












\newpage

\end{document}